\newtheorem{preremark}[theorem]{Remark}
\newenvironment{remark}%
{\begin{preremark} \upshape}{\end{preremark}}
\newcommand{\bm}[1]{\mbox{\boldmath${#1}$}}
\newcommand{\domain}{\Omega}
\newcommand{\cdomain}{\widebar{\Omega}}
\newcommand{\boundary}{\partial \domain}
\newcommand{\xbar}{\bar{x}}
\newcommand{\bx}[1]{\mbox{\boldmath$x_{#1}$}}
\newcommand{\x}{\bm{x}}
\newcommand{\bb}{\bm{b}}
\newcommand{\bn}{\bm{n}}
\newcommand{\y}{\bm{y}}
\newcommand{\widebar}{\overline}
\newcommand{\R}{\bm{R}} 
\newcommand{\blds}[1]{\mbox{\scriptsize \boldmath $#1$}}
\DeclareMathOperator*{\argmax}{argmax}
\DeclareMathOperator*{\argmin}{argmin}
\newif\ifnever\neverfalse
\newif\iffullversion\fullversiontrue
\newcommand{\marginfix}{
\setlength{\parskip}{0.01cm}
\setlength{\textwidth}{6.03in}
\setlength{\oddsidemargin}{0.2 in}
\setlength{\evensidemargin}{0.2 in}
\setlength{\topmargin}{-0.45in}
\setlength{\textheight}{9.03 in}
}
  \renewenvironment{thebibliography}[1]{%
    \begin{oldthebibliography}{#1}%
      \setlength{\parskip}{.3ex}%
      \setlength{\itemsep}{.3ex}%
  }%
  {%
    \end{oldthebibliography}%
  }
\begin{document}


\iffullversion
\else
\vspace*{-7mm}
\fi
{\Large{\bf
\centerline{Fast two-scale methods for Eikonal equations.}
}}

\vspace*{.1in}
{\Large
\centerline{Adam Chacon and Alexander Vladimirsky\footnote{
\mbox{This research is supported in part by 
the National Science Foundation grants DMS-0514487 and DMS-1016150.}
\mbox{The first author's research is also supported    
by Alfred P. Sloan Foundation Graduate Fellowship.             }
\iffullversion
\mbox{This manuscript is an extended version of the paper submitted for 
publication} in SIAM J. on Scientific Computing.
In the journal version, subsections \ref{ss:coarser} and \ref{ss:general_bc}
and parts of section \ref{s:our_methods} were omitted due to space limitations.
\fi
}}

\vspace*{.1in}
\centerline{Department of Mathematics and Center for Applied Mathematics}
\centerline{Cornell University, Ithaca, NY 14853}
}

\vspace*{.1in}
\begin{abstract}
\noindent

Fast Marching and Fast Sweeping are the two most commonly used
methods for solving the Eikonal equation. 
Each of these methods performs best on a different set of problems.  
Fast Sweeping, for example, will outperform Fast Marching on problems 
where the characteristics are largely straight lines.  
Fast Marching, on the other hand, is usually more efficient than 
Fast Sweeping on problems where characteristics frequently change 
their directions and on domains with complicated geometry.
In this paper we explore the possibility of combining the best 
features of both of these approaches, by using marching on 
a coarser scale and sweeping on a finer scale.
We present three new hybrid methods based on this idea and illustrate
their properties on several numerical examples with continuous and 
piecewise-constant speed functions in $R^2$.
\end{abstract}

\section{Introduction}
\label{s:intro}

Static Eikonal PDEs arise in a surprisingly wide range of applications:
from robotic path planning, to isotropic optimal control, 
to isotropic front propagation, to shape-from-shading computations;
see \cite{SethBook2} and references therein for a detailed description.
As a result, efficient numerical methods for Eikonal PDEs are of interest
to many practitioners and numerical analysts.  In this paper we introduce
two hybrid methods intended to blend and combine the best properties
of the most popular current approaches (Fast Marching and Fast Sweeping).

These methods are built to solve the non-linear boundary value problem\footnote{
For simplicity, we will restrict our exposition to 
first-order accurate discretizations of these problems 
on Cartesian grids in $R^2$, although generalizations 
to higher dimensional domains are straightforward and similar
approaches are applicable to 
higher-order accurate discretizations on unstructured meshes
in $R^n$ and on manifolds.}
\begin{eqnarray}
\nonumber
| \nabla u (\x) | F(\x) &=& 1, \text{ on } \domain \subset R^2;\\
u(\x) &=& q(\x),  \text{ on } \boundary.
\label{eq:Eikonal}
\end{eqnarray}



A discretized version of equation \eqref{eq:Eikonal} 
is posed at every gridpoint, using upwind divided differences
to approximate the partial derivatives of $u$.  
The exact form of this discretization is introduced in section \ref{s:prior_fast};
here we simply note that these discretized equations form a system of 
$M$ coupled non-linear equations (where $M$ is the number of gridpoints) and that the key challenge addressed
by many ``fast'' methods is the need to solve this system efficiently.
Of course, an iterative approach is certainly a possibility,
but its most straightforward and naive implementation typically leads
to $O(M^2)$ algorithmic complexity for Eikonal PDE 
(and potentially much worse for its anisotropic generalizations).
This is in contrast to the ``fast'' methods, whose worst-case
computational complexity is $O(M)$ or $O(M \log M)$.

Interestingly, most fast Eikonal-solvers currently in use 
are directly related 
to the fast algorithms developed much earlier 
to find
the shortest paths in directed graphs with non-negative
edge-lengths; 
see, e.g., \cite{Ahuja}, \cite{Bertsekas_NObook, Bertsekas_DPbook}.  
Two such algorithmic families are particularly prominent:
the {\em label-setting methods}, which have the optimal worst-case 
asymptotic computational complexity, and 
the {\em label-correcting methods}, whose worst-case asymptotic 
complexity is not as good, but the practical performance is at times
even better than that of label-setting.  We provide a basic overview
of both families in section \ref{ss:intro_graphs}.  The prior fast 
Eikonal-solvers based on label-setting and label-correcting 
are reviewed in sections 
\ref{ss:FM} and \ref{ss:FS}-\ref{ss:correcting_Eikonal} respectively.

The most popular methods from these two categories (Fast Marching 
and Fast Sweeping) have been shown to be efficient on a wide range
of Eikonal equations.  However, each of these methods has its
own preferred class of problems, on which it significantly
outperforms the other.
Despite experimental comparisons already conducted in
\cite{HysingTurek} and \cite{GremaudKuster}, the exact
delineation of a preferred problem-set for each method
is still a matter of debate.
Fast Sweeping (reviewed in section \ref{ss:FM}) is usually
more efficient on problems with constant characteristic directions.
But for general functions $F(\x)$, its
computational cost is clearly impacted by the frequency 
and magnitude of directional changes of characteristic curves.
Fast Marching (reviewed in section \ref{ss:FM})
is generally more efficient on domains with complicated geometry and on
problems with characteristic directions frequently changing.
Its causal algorithmic structure results in a provably converged solution
on explicitly determined parts of the computational domain 
even before the method terminates 
-- a very useful feature in many applications.  
Moreover, its efficiency is much more ``robust''; i.e., 
its computational cost is much less affected 
by any changes in functions $F$ and $q$ or the grid orientation.
But as a result, the Fast Marching also is not any faster in the simplest 
cases where $F$ is constant on a convex domain and 
all characteristics are straight lines --
the exact scenario where the Fast Sweeping is at its most efficient.

The fundamental idea underlying our hybrid two-scale methods is 
to take advantage of the best features of both marching and sweeping.
Suppose the domain is split in a moderate number of cells
such that $F$ is almost-constant on each of them.  
(Such cell splitting is possible for any piecewise-smooth $F$.)
On the top scale, a version of Fast Marching can be used on 
a coarse grid (with each gridpoint representing a cell of the fine grid).
Once the ordering of coarse gridpoints is established, the Fast Sweeping
is applied to individual cells of the fine grid in the same order.
This is the basis of our Fast Marching-Sweeping Method
(FMSM) described in section \ref{ss:FMSM}.

Unfortunately, the coarse grid ordering captures the information flow
through the fine grid cells only approximately: a coarse gridpoint $\y_i$
might be ``accepted'' by Fast Marching before another coarse gridpoint $\y_j$,
even if on the fine grid the characteristics cross both from cell $i$ 
to cell $j$ and from cell $j$ to cell $i$.  The ``one-pass'' nature of 
Fast Marching prevents FMSM  from acting on such interdependencies
between different cells even if they are revealed during the application
of Fast Sweeping to these cells.  To remedy this, we introduce the 
Heap-Cell Method (HCM) described in section \ref{ss:FHCM}.
The idea is to allow multiple passes through fine grid cells 
sorted by the representative ``cell-values'' and updated 
as a result of cell-level fast sweeping.
We also describe its heuristic version, the Fast Heap-Cell Method (FHCM), 
where the number of cell-level sweeps is determined based on 
the cell-boundary data.

Similarly to Fast Marching and Fast Sweeping, our HCM provably converges 
to the exact solution of the discretized equations on the fine scale.
In contrast, the even faster FHCM and FMSM usually introduce 
additional errors. 
But based on our extensive numerical experiments 
(section \ref{s:experiments}), 
these additional errors are small compared to the errors already 
present due to discretization.  The key advantage of 
all three new methods is their computational efficiency -- 
with properly chosen cell sizes, we can significantly outperform both
Fast Sweeping and Fast Marching on examples difficult for those
methods, while matching their performance on the examples which 
are the easiest for each of them.
\iffullversion
\else
Additional performance/accuracy tests 
can be found in an extended version of this manuscript \cite{ChacVlad}.
\fi
We conclude by discussing the current limitations of our approach and 
several directions for future work in section \ref{s:conclusions}.


\subsection{Fast algorithms for paths on graphs}
\label{ss:intro_graphs}
We provide a brief review of common fast methods
for the classical shortest/cheapest path
problems on graphs.  
Our exposition follows \cite{Bertsekas_NObook} and \cite{Bertsekas_DPbook}, 
but with modifications needed to emphasize the parallels with 
the numerical methods in sections \ref{s:prior_fast} and \ref{s:our_methods}.

Consider a directed graph with nodes $X= \{\x_1, ... , \x_M\}$.
Let $N(\x_i)$ be the set of nodes to which $\bx{i}$ is connected.
We will assume that $\kappa \ll M$ is an upper bound on outdegrees; i.e.,
$\left| N(\x_i) \right | \leq \kappa.$
We also suppose that all arc-costs $C_{ij} = C(\x_i,\x_j)$ are positive
and use $C_{ij} = + \infty$ whenever $\x_j \not \in N(\x_i)$.
Every path terminates upon reaching the specified exit set $Q \subset X$,
with an additional exit-cost $q_i= q(\x_i)$ for each $\x_i \in Q.$
Given any starting node $\x_i \in X$, the goal is to find the cheapest
path to the exit starting from $\x_i$.
The {\em value function} $U_i = U(\x_i)$ is defined to be the optimal 
path-cost (minimized over all paths starting from $\x_i$).
If there exists no path from $\x_i$ to $Q$, then $U_i = + \infty$, but for simplicity 
we will henceforth assume that $Q$ is reachable from each $\x_i$ and all $U_i$'s are finite.
The optimality principle states that the ``tail'' of 
every optimal path is also optimal; hence,
\begin{eqnarray}
\nonumber
U_i &=& \min\limits_{\x_j \in N(\x_i)} \left\{C_{ij} + U_j\right\},
 \qquad \text{for } \forall \x_i \in X \backslash Q;\\
\label{eq:DP_discrete}
U_i &=& q_i, 
\qquad \text{for } \forall \x_i \in Q.
\end{eqnarray}
This is a coupled system of $M$ non-linear equations,
but it possesses a nice ``causal'' property:
if $\x_j \in N(\x_i)$ is the minimizer, then $U_i>U_j$.


In principle, this system could be solved by ``value iterations'';
this approach is unnecessarily expensive (and is usually reserved for 
harder {\em stochastic} shortest path problems),  but we describe it here for
methodological reasons, to emphasize the parallels with ``fast iterative'' 
numerical methods for Eikonal PDEs.
An operator $T$ is defined on $\R^M$ component-wise by applying 
the right hand side of equation \eqref{eq:DP_discrete}.
Clearly, 
$U = 
\left[
\begin{array}{c}
U_1\\
\vdots\\
U_M
\end{array}
\right]
$
is a fixed point of $T$ and one can, in principle, recover $U$ 
by {\em value iterations}:
\begin{equation}
W^{k+1} \, := \, T \, W^k \qquad
\text{ starting from any initial guess $W^0 \in \R^M$.}
\label{eq:generic_value_it}
\end{equation}
Due to the causality of system \eqref{eq:DP_discrete}, 
value iterations will converge to $U$ regardless of $W^0$
after at most $M$ iterations, 
resulting in $O(M^2)$ computational cost.  
(It is easy to show by induction that $W^k_i = U_i$
for every $\x_i$ from which there exists an optimal path 
with at most $k$ transitions.)
A Gauss-Seidel relaxation of this iterative process is  
a simple practical modification, where the entries of $W^{k+1}$
are computed sequentially and 
the new values are used as soon as they become available:
$W_i^{k+1} = T_i (W_1^{k+1}, \ldots, W_{i-1}^{k+1}, W_i^k, \ldots, W_M^k).$
The number of iterations required to converge will now heavily
depend on the ordering of the nodes (though $M$ is still the upper bound).  
We note that, again due to causality of \eqref{eq:DP_discrete},
if the ordering is such that $U_i > U_j \Longrightarrow i>j$,
then only one full iteration will be required (i.e., $W^1 = U$ 
regardless of $W^0$).  Of course, $U$ is not known in advance 
and thus such a causal ordering is usually not available a priori
(except in acyclic graphs).
If several different node orderings are somehow known to capture 
likely dependency chains among the nodes, then a reasonable approach
would be to perform Gauss-Seidel iterations alternating through that list of
preferred orderings -- this might potentially result in a substantial reduction 
in the number of needed iterations.  
In section \ref{ss:FS} we explain how such preferred orderings arise
from the geometric structure of PDE discretizations,
but no such information is typically available in problems on graphs.
As a result, instead of alternating through a list of predetermined
orderings, efficient methods on graphs are based on finding advantageous
orderings of nodes {\em dynamically}.  
This is the basis for {\em label-correcting} and {\em label-setting} methods.

A generic label-correcting method is summarized below in algorithm \ref{alg:generic_lc}.
\begin{algorithm}[hhhh]
\caption{Generic Label-Correcting pseudocode.}
\label{alg:generic_lc}
\algsetup{indent=2em}
\begin{algorithmic}[1]
\STATE Initialization:
\FOR{each node $\x_i$}
	\IF{$\x_i \in Q$}
		\STATE $V_i \gets q_i$
	\ELSE
		\IF{$N(\x_i) \bigcap Q \neq \emptyset$}
			\STATE $V_i \gets \min\limits_{\x_j \in N(\x_i) \bigcap Q} \left\{C_{ij} + q_j\right\}$
			\STATE add $\x_i$ to the list $L$
		\ELSE
			\STATE $V_i \gets \infty$
		\ENDIF
	\ENDIF
\ENDFOR
\STATE
\STATE Main Loop:
\WHILE{$L$ is nonempty}
	\STATE Remove a node $\x_j$ from the list $L$
	\FOR{each $\x_i \not \in Q$ such that $\x_j \in N(\x_i)$ and $V_j < V_i$}
		\STATE $\widetilde{V} \gets C_{ij} + V_j$
		\IF{$\widetilde{V} < V_i$}
			\STATE $V_i \gets \widetilde{V}$
			\IF{$\x_i \not \in L$}
				\STATE add $\x_i$ to the list $L$
			\ENDIF 
		\ENDIF
	\ENDFOR
\ENDWHILE
\end{algorithmic}
\end{algorithm}
It is easy to prove that this algorithm always terminates and that upon its
termination $V= U$; e.g., see \cite{Bertsekas_NObook}.  
Many different label-correcting methods are obtained by
using different choices on how to add the nodes to the list $L$
and which node to remove (in the first line inside the while loop).
If $L$ is implemented as a queue, the node is typically removed from 
the top of $L$.  Always adding the nodes at the bottom of $L$ yields 
the {\em Bellman-Ford method} \cite{Bellman_DP_book}.    
(This results in a first-in/first-out policy
for processing the queue.)  Always adding nodes at the top of $L$ produces the
{\em depth-first-search} method, with the intention of minimizing the memory 
footprint of $L$.  Adding nodes at the top if they have already been in $L$ before,
while adding the ``first-timers'' at the bottom yields {\em D'Esopo-Pape method} \cite{Pape}.
Another interesting version is the so called 
{\em small-labels-first} (SLF) method \cite{Bertsekas_SLF},
where the node is added at the top only if its value is smaller than that
of the current top node and at the bottom otherwise.  Another variation is 
{\em large-labels-last} (LLL) method \cite{Bertsekas_LLL}, 
where the top node is removed only if 
its value is smaller than the current average of the queue; otherwise it's 
simply moved to the bottom of the queue instead.  Yet another popular approach
is called {\em thresholding method}, where $L$ is split into two queues,
nodes are removed from the first of them only and added to the first or the second
queue depending on whether the labels 
are smaller than some (dynamically changing) threshold value \cite{Glover}.
We emphasize that the convergence is similarly obtained for all of these methods,
their worst-case asymptotic complexity is $O(M^2)$, but their 
comparative efficiency for specific problems can be dramatically different.

Label-setting algorithms can be viewed as a subclass of the above 
with an additional property: nodes removed from $L$ never need to be re-added later.
Dijkstra's classical method \cite{Diks} is the most popular in this category and is based 
on always removing the node with the smallest label of those currently in $L$.
(The fact that this results in no re-entries into the list is yet another consequence 
of the causality; the inductive proof is simple; e.g., see \cite{Bertsekas_NObook}.)
The need to find the smallest label entails additional computational 
costs.  A common implementation of $L$ using heap-sort data structures will
result in $O(M \log M)$ overall asymptotic complexity of the method on sparsely connected
graphs (i.e., provided $\kappa \ll M$).
Another version, due to Dial \cite{Dial}, implements $L$ as a list of ``buckets'',
so that all nodes in the current smallest bucket can be safely removed simultaneously,
resulting in the overall asymptotic complexity of $O(M)$.
The width of each bucket is usually set to be $\delta = \min_{i,j} C_{ij}$ to ensure
that the nodes in the same bucket could not influence or update each other even if 
they were removed sequentially.

We note that several label-correcting methods were designed
to mimic the ``no-re-entry'' property of label-setting, but without
using expensive data structures.  (E.g., compare SLF/LLL to Dijkstra's 
and thresholding to Dial's.)
Despite the lower asymptotic complexity of label-setting methods, 
label-correcting algorithms can be more efficient on many problems.
Which types of graphs favor which of these algorithms remains largely
a matter of debate.  We refer readers to \cite{Bertsekas_NObook, Bertsekas_DPbook}
and references therein for additional details and asynchronous (parallelizable) 
versions of label-correcting algorithms.

\section{Eikonal PDE, upwind discretization \& prior fast methods}
\label{s:prior_fast}

Static Hamilton-Jacobi equations frequently arise in
exit-time optimal control problems.  
The Eikonal PDE \eqref{eq:Eikonal} describes an important 
subset: isotropic time-optimal control problems.
The goal is to drive a system starting from a point $\x \in \domain$
to exit the domain as quickly as possible.  In this setting,
$F:\domain \to \R_+$ is the local speed of motion, 
and $q: \boundary \to \R$ is the exit-time penalty 
charged at the boundary.  
We note that more general control problems 
(with an exit-set $Q \subset \boundary$
and trajectories constrained to remain inside $\domain$ until reaching $Q$)
can be treated similarly by setting $q = +\infty$ on $\boundary \backslash Q$.
 
The {\em value function} $u(\x)$ is defined 
to be the minimum time-to-exit starting from $\x$ and a formal argument 
shows that $u$ should satisfy the equation \eqref{eq:Eikonal}.
Moreover, characteristics of this PDE, coinciding with the gradient lines of $u$,
provide the optimal trajectories for moving through the domain.
Unfortunately, the equation \eqref{eq:Eikonal} usually does not have 
a classical (smooth) solution on the entire domain, 
while weak solutions are not unique.  Additional test conditions are used
to select among them the unique {\em viscosity solution}, which coincides with the
value function of the original control problem \cite{CranLion, CranEvanLion}.
A detailed treatment of general optimal control problems in 
the framework of viscosity solutions can be found in \cite{BardiDolcetta}.

Many discretization approaches for the Eikonal equation have been extensively studied
including first-order and higher-order Eulerian discretizations on grids and meshes 
in $\R^n$ and on manifolds \cite{RouyTour, SethFastMarcLeveSet, KimmSethTria, SethVlad1},
semi-Lagrangian discretizations \cite{Falc_Dial, GonzalezRofman}, and 
the related approximations with controlled Markov chains \cite{KushnerDupuis, BoueDupuis}.
For the purposes of this paper, we will focus on the simplest first-order upwind 
discretization on a uniform Cartesian grid $X$ (with gridsize $h$) on $\cdomain \subset \R^2$.
To simplify the description of algorithms, we will further assume that 
both $\boundary$ and $Q$ are naturally discretized on the grid $X$.
Our exposition here closely follows \cite{SethSIAM, SethBook2}.

To introduce the notation, we will refer to
gridpoints $\x_{ij}=(x_i, y_j)$, value function approximations 
$U_{ij} = U(\x_{ij}) \approx u(\x_{ij})$, and the speed $F_{ij} = F(\x_{ij})$.
A popular first-order accurate discretization of 
\eqref{eq:Eikonal} is obtained by using upwind finite-differences to approximate
partial derivatives:
\begin{equation}
\label{eq:Eik_discr}
\left( \max \left( D^{-x}_{ij}U, \, -D^{+x}_{ij}U, \, 0 \right)\right)^{2} 
\; + \;
\left( \max \left( D^{-y}_{ij}U, \, -D^{+y}_{ij}U, \, 0 \right)\right)^{2} 
\; = \; \frac{1}{F^2_{ij}},
\end{equation}
$$
\text{ where } \qquad
u_x(x_i, y_j) \approx D^{\pm x}_{ij} U = 
\frac{ U_{i \pm 1, j} - U_{i,j} }{ \pm h}; \qquad
u_y(x_i, y_j) \approx D^{\pm y}_{ij} U = 
\frac{ U_{i, j \pm 1} - U_{i,j} }{ \pm h}.
$$
If the values at four surrounding nodes are known,
this equation can be solved to recover $U_{ij}$.
This is best accomplished by computing updates 
from individual quadrants as follows.
Focusing on a single node $\x_{ij}$, we will simplify 
the notation by using $U = U_{ij}$, $F = F_{ij}$, and $\{U_E, U_N, U_W, U_S\}$
for the values at its four neighbor nodes.

First, suppose that $\max \left( D^{-x}_{ij}U, \, -D^{+x}_{ij}U, \, 0 \right) = 0$
and $\max \left( D^{-y}_{ij}U, \, -D^{+y}_{ij}U, \, 0 \right) = -D^{+y}_{ij}U$.
This implies that $U \geq U_N$ and the resulting equation yields 
\begin{equation}
\label{eq:one_sided_update}
U = h/F + U_N.
\end{equation}

To compute ``the update from the first quadrant'', 
we now suppose that\\
$\max \left( D^{-x}_{ij}U, \, -D^{+x}_{ij}U, \, 0 \right) = -D^{+x}_{ij}U$
and $\max \left( D^{-y}_{ij}U, \, -D^{+y}_{ij}U, \, 0 \right) = -D^{+y}_{ij}U$.\\
This implies that $U \geq U_N$ and $U \geq U_E$.
The resulting quadratic equation is 
\begin{equation}
\label{eq:two_sided_update}
\left(\frac{U-U_E}{h}\right)^2 + \left(\frac{U-U_N}{h}\right)^2 = \frac{1}{F^2}.
\end{equation}
We define ``the update from the first quadrant'' $U^{NE}$ to be 
the root of the above quadratic satisfying $U \geq \max(U_N, U_E)$.
If no such root is available, we use the smallest of the ``one-sided'' updates,
similar to the previous case; i.e., $U^{NE} = h/F + \min(U_N, U_E).$
If we similarly define the updates from the remaining three quadrants,
it is easy to show that $U = \min(U^{NE}, U^{NW}, U^{SW}, U^{SE})$ satisfies 
the original equation \eqref{eq:Eik_discr}.

\begin{remark}
\label{rem:discr_properties}
It is also easy to verify that this discretization is\\ 
$\bullet \,$ {\em consistent}, i.e., suppose both sides of \eqref{eq:Eik_discr}
are multiplied by $h^2$; if the true solution $u(\x)$ is smooth,
it satisfies the resulting discretized equation up to $O(h^2)$;\\ 
$\bullet \,$ {\em monotone},
i.e., $U$ is a non-decreasing function of each of
its neighboring values;\\ 
$\bullet \,$ {\em causal}, i.e., $U$ depends only on the neighboring values smaller than itself
\cite{SethFastMarcLeveSet, SethBook2}.\\
The consistency and monotonicity can be used to prove the convergence to
the viscosity solution $u(\x)$; see \cite{BarlesSouganidis}.  
\end{remark}
However, since \eqref{eq:Eik_discr} has to hold at every gridpoint 
$\x_{ij} \in X \backslash Q$, this discretization results 
in a system of $M$ coupled non-linear equations, where $M$ is the number of 
gridpoints in the interior of $\domain$.  
In principle, this system can be solved iteratively 
(similarly to the ``value iterations'' process
described in  \eqref{eq:generic_value_it}) with or without Gauss-Seidel relaxation,
but a naive implementation of this iterative algorithm would be unnecessarily expensive,
since it does not take advantage of the causal properties of the discretization.
Several competing approaches for solving the discretized system efficiently 
are reviewed in the following subsections.

\subsection{Label-setting methods for the Eikonal}
\label{ss:FM}

The causality property observed above is the basis of Dijkstra-like methods for
the Eikonal PDE.  The first such method was introduced by Tsitsiklis 
for isotropic control problems using
first-order semi-Lagrangian discretizations on uniform Cartesian grids 
\cite{Tsitsiklis_conference, Tsitsiklis}.
The Fast Marching Method was introduced by Sethian \cite{SethFastMarcLeveSet}
using first-order upwind-finite differences in the context of 
isotropic front propagation.  
A detailed discussion of similarities 
and differences of these approaches can be found in \cite{SethVlad3}.
Sethian and collaborators have later extended the Fast Marching
approach to higher-order discretizations on grids and meshes \cite{SethSIAM}, 
more general anisotropic Hamilton-Jacobi-Bellman PDEs \cite{SethVlad2, SethVlad3}, 
and quasi-variational inequalities \cite{SethVladHybrid}.
Similar methods were also introduced for semi-Lagrangian discretizations
\cite{CristianiFalcone}.
The Fast Marching Method for the Eulerian discretization \eqref{eq:Eik_discr} 
is summarized below in Algorithm \ref{alg:FMM}.


\begin{algorithm}[hhhh]
\caption{Fast Marching Method pseudocode.}
\label{alg:FMM}
\algsetup{indent=2em}
\begin{algorithmic}[1]
\STATE Initialization:
\FOR{each gridpoint $\x_{ij} \in X$}
	\IF{$\x_{ij} \in Q$}
		\STATE Label $\x_{ij}$ as $Accepted$ and set $V_{ij} = q(\x_{ij})$.
	\ELSE
		\STATE Label $\x_{ij}$ as $Far$ and set $V_{ij} = \infty$.
	\ENDIF
\ENDFOR
\FOR{each $Far$ neighbor $\x_{ij}$ of each Accepted node}
	\STATE Label $\x_{ij}$ as $Considered$ and put $\x_{ij}$ onto the Considered List $L$.
	\STATE Compute a temporary value $\widetilde{V}_{ij}$ using the upwinding discretization.
	\IF{$\widetilde{V}_{ij} < V_{ij}$}
		\STATE $V_{ij} \gets \widetilde{V}_{ij}$
	\ENDIF
\ENDFOR
\STATE End Initialization
\STATE
\WHILE{$L$ is nonempty}
	\STATE Remove the point $\xbar$ with the smallest value from $L$.
	\FOR{$\x_{ij} \in N(\xbar)$}
	\STATE Compute a temporary value $\widetilde{V}_{ij}$ using the upwinding discretization.
	\IF{$\widetilde{V}_{ij} < V_{ij}$}
		\STATE $V_{ij} \gets \widetilde{V}_{ij}$
	\ENDIF
	\IF{$\x_{ij}$ is $Far$}
		\STATE Label $\x_{ij}$ as $Considered$ and add it to $L$.
	\ENDIF
	\ENDFOR
\ENDWHILE
\end{algorithmic}
\end{algorithm}

As explained in section \ref{ss:intro_graphs}, the label-setting Dijkstra's method can be considered
as a special case of the generic label-correcting algorithm, provided the current smallest node in $L$ 
is always selected for removal.  Of course, in this case it is more efficient to implement $L$
as a binary heap rather than a queue.  The same is also true for the Fast Marching Method, 
and a detailed description of an efficient implementation of the heap-sort data structure can 
be found in \cite{SethBook2}.  The re-sorting of $Considered$ nodes upon each update involves
up to $O(\log M)$ operations, resulting in the overall computational complexity of $O( M \log M)$.



Unfortunately, the discretization \eqref{eq:Eik_discr} is only weakly causal:
there exists no $\delta > 0$ such that $U^{NE} > \delta + \max(U^N, U^E)$
whenever $U^{NE} > \max(U^N, U^E).$  
Thus, no safe ``bucket width'' can be defined and
Dial-like methods are not applicable to the resulting discretized system.  
In \cite{Tsitsiklis} Tsitsiklis introduced a Dial-like method for 
a similar discretization but using an 8-neighbor stencil.  
More recently, another Dial-related method for the Eikonal PDE on a uniform grid
was introduced in \cite{KimGMM}.  
A more general formula for the safe bucket-width to be used in Dial-like methods 
on unstructured acute meshes was derived in \cite{VladMSSP}.
Despite their better computational complexity, Dial-like methods
often perform slower than Dijkstra-like methods at least on single processor architectures.
 
Finally, we note another convenient feature of label-setting methods: 
if the execution of the algorithm is stopped early (before the list $L$ becomes empty),
all gridpoints previously removed from $L$  will already have provably correct values.
This property (unfortunately not shared by the methods in sections 
\ref{ss:FS}-\ref{ss:correcting_Eikonal})
is very useful in a number of applications: e.g., when computing a quickest path
from a single source to a single target or in problems of image segmentation 
\cite{SethBook2}.

\subsection{Fast Sweeping Methods}
\label{ss:FS}

Suppose all gridpoints in $X$ are ordered.  We will slightly abuse the notation
by using a single subscript (e.g., $\x_i$)
to indicate the particular gridpoint's place in that ordering.
The double subscript notation (e.g., $\x_{ij}$) will be still reserved to indicate 
the physical location of a gridpoint in the two-dimensional grid.

Consider discretization \eqref{eq:Eik_discr} and suppose that the solution $U$ is known
for all the gridpoints.  Note that for each $\x_i$, the value $U_i$ will only depend 
on one or two of the neighboring values (depending on which quadrant is used for 
a two-sided update, similar to \eqref{eq:two_sided_update}, and on whether a 
one-sided update is employed, similar to \eqref{eq:one_sided_update}).  
This allows us to define a {\em dependency digraph} $G$ on the vertices
$\x_1, \ldots, \x_M$ with a link from $\x_i$ to $\x_j$ indicating that $U_j$ is needed 
to compute $U_i$.  The causality of the discretization \eqref{eq:Eik_discr} guarantees that $G$ 
will always be acyclic.  Thus, if we were to order the gridpoints respecting
this causality (i.e., with $i>j \; \Longrightarrow \;$ there is no path in $G$ from
$\x_j$ to $\x_i$), then a single Gauss-Seidel iteration would correctly solve 
the full system in $O(M)$ operations.  

However, unless $U$ was already computed,
the dependency digraph $G$ will not be generally known in advance.  
Thus, basing a gridpoint ordering on it is not a practical option.
Instead, one can alternate through a list of several ``likely'' orderings
while performing Gauss-Seidel iterations.  A geometric interpretation of the optimal control problem 
provides a natural list of likely orderings: if all characteristics point from SW to NE,
then ordering the gridpoints bottom-to-top and left-to-right within each row will ensure
the convergence in a single iteration (a ``SW sweep'').  
The ``Fast Sweeping Methods'' 
perform Gauss-Seidel iterations 
on the system \eqref{eq:Eik_discr} in alternating directions (sweeps).   
Let $m$ be the number of gridpoints in the $x$-direction and $n$ be the number in 
the $y$-direction, and $\x_{ij}$ will denote a gridpoint in a uniform Cartesian grid 
on $\domain \subset R^2.$
For simplicity, we will use the Matlab index notation 
to describe the ordering of gridpoints in each sweep.
There are four alternating sweeping directions: from SW, from SE, from NE, 
and from NW.  For the above described southwest sweep, the gridpoints $\x_{ij}$ will be 
processed in the following order:  {\tt i=1:1:m, j=1:1:n}.  
All four orderings are similarly defined in algorithm \ref{alg:FSM_order}.
\begin{algorithm}[hhhh]
\caption{Sweeping Order Selection pseudocode.}
\label{alg:FSM_order}
\algsetup{indent=2em}
\begin{algorithmic}[1]
\STATE $sweepDirection \gets sweepNumber \text{ \tt mod } 4$
\IF{$sweepDirection == 0$}
	\STATE $iOrder \gets \; (1:1:m)$
	\STATE $jOrder \gets \; (1:1:n)$
\ELSIF{$sweepDirection == 1$} 
	\STATE $iOrder \gets \; (1:1:m)$
	\STATE $jOrder \gets \; (n:-1:1)$
\ELSIF{$sweepDirection == 2$} 
	\STATE $iOrder \gets \; (m:-1:1)$
	\STATE $jOrder \gets \; (n:-1:1)$
\ELSE
	\STATE $iOrder \gets \; (m:-1:1)$
	\STATE $jOrder \gets \; (1:1:n)$
\ENDIF 
\end{algorithmic}
\end{algorithm} 

The alternating sweeps are then repeated until convergence.  
The resulting algorithm is summarized in \ref{alg:FSM}. 

\begin{algorithm}[hhhh]
\caption{Fast Sweeping Method pseudocode.}
\label{alg:FSM}
\algsetup{indent=2em}
\begin{algorithmic}[1]
\STATE Initialization:
\FOR{each gridpoint $\x_{ij} \in X $}
	\IF{$\x_{ij} \in Q$}
		\STATE $V_{ij} \gets q(\x_{ij})$.
	\ELSE
		\STATE $V_{ij} \gets \infty$.
	\ENDIF
\ENDFOR
\STATE
\STATE Main Loop:
\STATE sweepNumber $\gets$ 0
\REPEAT
	\STATE changed $\gets$ FALSE
	\STATE Determine iOrder and jOrder based on sweepNumber
	\FOR{$i = iOrder$}
		\FOR{$j = jOrder$}
			\IF{$\x_{ij} \not \in Q$} 
				\STATE Compute a temporary value $\widetilde{V}_{ij}$ using upwinding discretization \eqref{eq:Eik_discr}.
				\IF{$\widetilde{V}_{ij} < V_{ij}$}
					\STATE $V_{ij} \gets \widetilde{V}_{ij}$
					\STATE changed $\gets$ TRUE
				\ENDIF
			\ENDIF
		\ENDFOR
	\ENDFOR
	\STATE sweepNumber $\gets$ sweepNumber + 1
\UNTIL{changed == FALSE}
\end{algorithmic}
\end{algorithm}

\begin{remark}
\label{rem:sweep_history}
The idea that alternating the order of Gauss-Seidel sweeps might
speed up the convergence is a centerpiece of many fast algorithms.
For Euclidean distance computations it was first used by Danielsson in \cite{Danielsson}. 
In the context of general HJB PDEs it was introduced by Boue and Dupuis in \cite{BoueDupuis} 
for a numerical approximation based on controlled Markov chains. 
More recently, a number of papers by Cheng, Kao, Osher, Qian, Tsai, and Zhao
introduced related Fast Sweeping Methods to speed up the iterative solving of 
finite-difference discretizations \cite{TsaiChengOsherZhao, Zhao, KaoOsherQian}.
The key challenge for these methods is to find
a provable and explicit upper bound on the number of iterations. 
As of right now, such a bound is only available for boundary value problems in which characteristics
are straight lines. Experimental evidence suggests that these methods can be also very efficient for
other problems where the characteristics are ``largely'' straight.  
The number of necessary iterations is largely independent of $M$ 
and correlated with the number of times the characteristics 
``switch directions'' (i.e., change from one directional quadrant to another) inside $\domain$.  
However, since
the quadrants are defined relative to the grid orientation, the number of iterations will generally 
be grid-dependent.  

One frequently encountered argument is that, due to its $O(M)$ computational complexity,
the Fast Sweeping is more efficient than the Fast Marching, whose complexity is $O(M \log M)$.
However, this asymptotic complexity notation hides constant factors -- including 
this not-easily-quantifiable (and grid-orientation-dependant) bound on 
the number of iterations needed in Fast Sweeping.  
As a result, whether the $O(M)$ asymptotic complexity actually
leads to any performance advantage on grids of realistic size is 
a highly problem-dependent question.
Extensive experimental comparisons of Marching and Sweeping approaches
can be found in \cite{HysingTurek, GremaudKuster}.  
Even though such a comparison is not the main focus of the current paper,
the performance of both methods is also tabulated for all 
examples in section \ref{s:experiments}.
On the grids we tested, we observe that the Fast Marching Method 
usually performs better than the Fast Sweeping when the domain has a complicated geometry 
(e.g., shortest/quickest path computations in a maze) or 
if the characteristic directions change often 
throughout the domain
-- the latter situation frequently arises in Eikonal problems
when the speed function $F$ is highly inhomogeneous.

We note that the sweeping approach can be in principle useful for a much wider class of problems.
For example, the method introduced in \cite{KaoOsherQian}
is applicable to problems with non-convex Hamiltonians corresponding to differential games; 
however, the amount of required artificial viscosity is strongly problem-dependent and the choice of 
consistently discretized boundary conditions can be complicated.
Sweeping algorithms for discontinuous Galerkin finite element discretizations of the Eikonal PDE
can be found in \cite{Li_1, Li_2}.
\end{remark}

The Fast Sweeping Method performs particularly well on problems where the speed function $F$ is constant,
since in this case the characteristics of the Eikonal PDE will be straight lines regardless of 
the boundary conditions.  (E.g., if $q \equiv 0$, then the quickest path is a straight line to 
the nearest boundary point.)  As a result, the domain consists of 4 subdomains, each with its
own characteristic ``quadrant direction''.
Even though these subdomains are generally not known in advance, 
it is natural to expect Fast Sweeping to converge in at most 4 iterations 
(e.g., if $\x_{ij}$'s characteristic comes from the SE, then the same is true for all points 
immediately to SE from $\x_{ij}$).  However, on the grid, the dependency graph can be more 
complicated -- $\x_{ij}$ will depend on both its southern and eastern neighbors.  
The characteristic directions are changing continuously everywhere, except at the shocks.
So, if $\x_{ij}$ is near a shock line, one of its neighbors might be in another subdomain,
making additional sweeps occasionally necessary even for such simple problems; see
Figure \ref{fig:more_than_4} for an illustration.

\begin{figure}[hhhh]
\center{
\begin{tikzpicture}[scale=1.25]

\draw[thick](0,0) rectangle +(4,4);
\foreach \x in {0.5, 1, 1.5, 2, 2.5, 3, 3.5}
	\draw[dotted](\x,0)--(\x,4);
\foreach \y in {0.5, 1, 1.5, 2, 2.5, 3, 3.5}
	\draw[dotted] (0, \y)--(4, \y);

\draw[black,very thick] (3.75, 0)--(1.75,2) -- (0, 2.25);
\draw[black,very thick] (1.25, 4)--(1.75,2) -- (4, 3.25);

\draw (3,1) circle(.6);

\draw[red,thick,->](4,.5)--(3,1);
\draw[red,thick,->](4, 1.25)--(2.5, 2);
\draw[red,thick,->](4, 2)--(3, 2.5);

\draw[red,thick,->](2,4)--(1.5, 3);
\draw[red,thick,->](3,4)--(2.25, 2.5);
\draw[red,thick,->](4,4)--(3.5, 3);

\draw[red,thick,->](1,4)--(1.25, 3.75);
\draw[red,thick,->](0,4)--(1.5, 2.5);
\draw[red,thick,->](0,3)--(0.75, 2.25);

\draw[red,thick,->](0,1)--(1.5, 2);
\draw[red,thick,->](0,0)--(2.25, 1.5);
\draw[red,thick,->](1.5,0)--(2.75, 0.8333);

\draw (7,2) circle(1.2);
\draw (3,.4) -- (7,.8);
\draw (3,1.6) -- (6.6,3.125);

\draw[dotted](7,0.8)--(7,3.2);
\draw[dotted](5.8,2)--(8.2,2);

\draw[red,thick](7.5, 1.75) -- (7,2);
\draw[red,thick,->](8,1.5)--(7.5, 1.75);

\filldraw(7,2) circle (.05); 
\draw (7.2,2.1) node{$\x_{ij}$};

\filldraw(8,2) circle (.05);
\filldraw(6,2) circle (.05);
\filldraw(7,1) circle (.05);
\filldraw(7,3) circle (.05);

\draw[black, very thick] (7.5, 1)--(6,2.5);

\end{tikzpicture}
}
\caption{
{\footnotesize
Four subdomains with a different update quadrant in each of them.
If the sweeping directions are used in the order $(SE,SW,NW,NE),$ 
then the node labeled $\x_{ij}$ near the shock line will not receive 
its final update until the 5th sweep, since its southern neighbor
lies in the southwest subdomain.
For simplicity, this example uses boundary conditions 
such that the characteristic directions are constant in each subdomain.
As a result, all subdomain boundaries coincide with shock lines,
which need not be the case in general, but the illustrated effect is generic.
}}
\label{fig:more_than_4}
\end{figure}
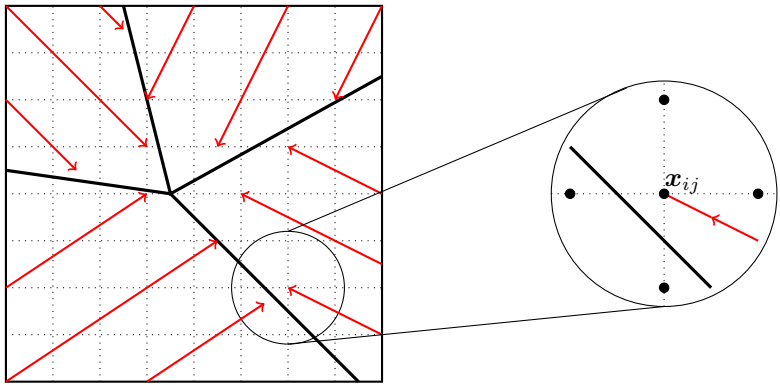

Nevertheless, when $F$ is constant, 
the Fast Sweeping is usually more efficient than the Fast Marching 
regardless of the boundary conditions
-- an observation which 
is the basis for the hybrid methods introduced in the next section.

\begin{remark}
\label{rem:regular_FSM_update}
It might seem that the recomputation of $V_{ij}$ from \eqref{eq:Eik_discr} will generally require 
solving 4 quadratic equations to compare the updates from all 4 quadrants.
However, the monotonicity property noted in Remark \ref{rem:discr_properties} guarantees that 
only one quadrant needs to be considered.
E.g., if $U_S < U_N$ then $U^{SE} \leq U^{NE}$ and the latter is irrelevant 
even if we are currently sweeping from NE.  Thus, the relevant quadrant can be always found by using
$\min(U_S, U_N)$ and $\min(U_E, U_W)$.
We note that this shortcut is not directly 
applicable to discretizations on unstructured meshes nor for more general PDEs.  
Interestingly, Alton and Mitchell showed that the same shortcut can also be used 
with Cartesian grid discretizations of Hamilton-Jacobi PDEs with grid-aligned 
anisotropy \cite{AltonMitchell_TR}. 
\end{remark}

\begin{remark}
\label{rem:locking_FSM}
One of the problems in this basic version of the Fast Sweeping Method is the fact that the CPU time
might be wasted to recompute $V_{ij}$ even if none of $\x_{ij}$'s neighbors have changed 
since the last sweep.
To address this, one natural modification is to introduce ``locking flags'' for individual 
gridpoints and to update the currently unlocked gridpoints only \cite{Renzi}.
Briefly, all gridpoints but those immediately adjacent to $Q$ start out as locked.  
When an unlocked gridpoint $\x_{ij}$ is processed during a sweep, if $U_{ij}$ changes, 
then all of its larger neighbors are unlocked.
The gridpoint $\x_{ij}$  is then itself locked regardless of whether updating $U_{ij}$ resulted in unlocking a neighbor.

The above modification does not change the asymptotic complexity of the method nor
the total number of sweeps needed for convergence. 
Nevertheless, the extra time and memory required to maintain and update the locking flags 
are typically worthwhile since their use allows to decrease the amount of CPU-time wasted on parts of 
the domain, where the iterative process already produced the correct numerical solutions.
In sections \ref{s:our_methods} and \ref{s:experiments} we will refer to this modified version 
as Locking Sweeping Method (LSM) to distinguish it from the standard implementation of the FSM.
\end{remark}



\subsection{Other fast methods for Eikonal equations}
\label{ss:correcting_Eikonal}

Ideas behind many label-correcting algorithms on graphs have also been
applied to discretizations of Eikonal PDEs.  Here we aim to briefly 
highlight some of these connections.

Perhaps the first label-correcting methods developed for the Eikonal PDE were introduced by
Polymenakos, Bertsekas, and Tsitsiklis based on the logic of the discrete SLF/LLL algorithms \cite{PolyBerTsi}.
On the other hand, Bellman-Ford is probably the simplest label-correcting approach and it has been recently
re-invented by several numerical analysts working with Eikonal and more general 
Hamilton-Jacobi-Bellman PDEs \cite{BorRasch}, \cite{Renzi}, including implementations for 
massively parallel computer architectures \cite{JeongWhitaker}.
A recent paper by Bak, McLaughlin, and Renzi \cite{Renzi} also introduces another 
``2-queues method'' essentially mimicking the logic of thresholding label-correcting algorithms on graphs.
While such algorithms clearly have promise and some numerical comparisons of 
them with sweeping and marching techniques are already presented in the above references,
more careful analysis and testing is required to determine the types of examples on which they
are the most efficient.

All of the above methods produce the exact same numerical solutions as FMM and FSM.
In contrast, two of the three new methods introduced in section \ref{s:our_methods} aim 
to gain efficiency even if it results in small additional errors.
We know of only one prior numerical method for Eikonal PDEs with a similar trade-off: 
in \cite{Sapiro_buckets} a Dial-like method is used with buckets of unjustified width 
$\delta$ for a discretization that is not $\delta$-causal.  
This introduces additional errors (analyzed in \cite{RaschSatzger}), 
but decreases the method's running time.
However, the fundamental idea behind our new two-scale methods is quite different,
since we aim to exploit the geometric structure of the speed function.

\section{New hybrid (two-scale) fast methods}
\label{s:our_methods}
We present three new hybrid methods based on splitting the domain into a collection of 
non-overlapping rectangular ``cells''
and running the Fast Sweeping Method on individual cells sequentially.  
The motivation for this 
decomposition is to break the problem into sub-problems, with $F$ nearly constant 
inside each cell.  If the characteristics rarely change their quadrant-directions 
within a single cell, then a small number of sweeps should be sufficient on that cell.
But to compute the value function correctly within each cell, 
the correct boundary conditions 
(coming from the adjacent cells) should be already available.  
In other words, we need to establish a causality-respecting order for processing the cells.  
The Fast Marching Sweeping Method (FMSM) uses the cell-ordering found by running the 
Fast Marching Method on a coarser grid, while the Heap-Cell Methods (HCM and FHCM)
determine the cell-ordering dynamically, based on the value-updates on cell-boundaries.

\subsection{Fast Marching-Sweeping Method (FMSM)}
\label{ss:FMSM}
This algorithm uses a coarse grid and a fine grid.
Each ``coarse gridpoint'' is taken to be the center of a cell 
of ``fine gridpoints''.  (For simplicity, we will assume that the exit-set $Q$ 
is directly representable by coarse gridpoints.)
The Fast Marching is used on the coarse grid, and the Acceptance-order of 
``coarse gridpoints'' is recorded.  The Fast Sweeping is then used on the corresponding 
cells in the same order.  An additional speed-up is obtained, by running 
a fixed number of sweeps on each cell, based on the upwind directions 
determined on the coarse grid.  Before providing the details of our implementation,
we introduce some relevant notation:\\
$\bullet \,$ $X^{c} = \{ \x_1^{c}, ..., \x_J^{c}\} $, the coarse grid.\\
$\bullet \,$ $X^{f} = \{ \x_1^{f}, ..., \x_M^{f}\} $, the fine grid (same as the grid used in FMM or FSM).\\
$\bullet \,$ $Q^{c} \subset X^{c} $, the set of coarse gridpoints discretizing the exit set $Q$.\\
$\bullet \,$ $U^{c}$, the solution of the discretized equations on the coarse grid.\\
$\bullet \,$ $V^{c}$, the temporary label of the coarse gridpoints.\\
$\bullet \,$ $Z = \{c_1, ..., c_J\}$, the set of cells, whose centers correspond to coarse gridpoints.\\
$\bullet \,$ $N^c(c_i)$, the neighbors of cell $c_i$; i.e., the cells that exist to the north, south, east, and west of $c_i$.\\
(The set $N^c(c_i)$ may contain less than four elements if $c_i$ is a boundary cell.)\\ 
$\bullet \,$ $N^f(c_i)$, the fine grid neighbors of $c_i$; i.e., 
$N^f(c_i) = \{ \x_j^f \in X^f \mid \x_j^f \not \in c_i \text{ and } N(\x_j^f) \bigcap c_i \neq \emptyset \}.$\\
$\bullet \,$ $P: \{1,...,J\} \rightarrow \{1,...,J\}$, a permutation on the coarse gridpoint indices.\\
$\bullet \,$ $h_{x}^{c}$, the distance along the $x$-direction between two neighboring coarse gridpoint.\\
Assume for simplicity that $h_{x}^{c} = h_{y}^{c} = h^{c}$.\\

All the obvious analogs hold for the fine grid ($U^{f}, h^{f}$, etc).  
Since Fast Marching will be used on the coarse grid only, the heap $L$ will contain coarse gridpoints only.

\begin{algorithm}[hhhh]
\caption{Fast Marching-Sweeping Method pseudocode.}
\label{alg:FMSM}
\algsetup{indent=2em}
\begin{algorithmic}[1]

\STATE Part I:
	\STATE Run FMM on $X^c$ (see algorithm \ref{alg:FMM}).
	\STATE Build the ordering $P$ to reflect the Acceptance-order on $X^c$.
\STATE	
\STATE Part II:
\STATE Fine grid initialization:
\FOR{each gridpoint $\x_i^f \in X^f$}
	\IF{$\x_i^{f} \in Q^{f}$}
		\STATE $V_i^{f} \gets q_i^{f};$
	\ELSE
		\STATE $V_i^{f} \gets \infty;$
	\ENDIF
\ENDFOR
\STATE
	\FOR{$j = P(1):P(J)$}
		\STATE Define the fine-grid domain $\tilde{c} = c_j \bigcup N^f(c_j).$
		\STATE Define the boundary condition as 
		\STATE $\qquad \qquad \tilde{q}(\x^f_i) = q(\x^f_i)$  on $c_j \bigcap Q^f$ and
		\STATE $\qquad \qquad \tilde{q}(\x^f_i) = V^f_i$  on $N^f(c_j).$
		\STATE Perform Modified Fast Sweeping (see Remark \ref{rem:sweeping_in_FMSM}) on $\tilde{c}$ using boundary conditions $\tilde{q}$.
	\ENDFOR
\STATE
	 
\end{algorithmic}
\end{algorithm}

\begin{remark}
\label{rem:sweeping_in_FMSM}
The ``Modified Fast Sweeping'' procedure applied to individual cells in the algorithm \ref{alg:FMSM} 
follows the same idea as the FSM described in section \ref{ss:FS}.
For all the cells containing parts of $Q$ 
(i.e., the ones whose centers are Accepted {\em in the initialization} of the FMM
on the coarse grid) we use the FSM without any changes.  
For all the remaining cells, our implementation has 3 important distinctions 
from the algorithm \ref{alg:FSM}: 
\begin{enumerate}
\item
No initialization of the fine gridpoints within $\tilde{c}$ is needed
since the entire fine grid is pre-initialized in advance.
\item
Instead of looping through different sweeps until convergence,
we use at most four sweeps and only in the directions found to be ``upwind'' 
on the coarse grid. 
As illustrated by Figure \ref{fig:coarse_sweep_choice}, 
the cells in $N^c(c_i)$ whose centers were accepted prior to $\x_i^c$ 
determine the sweep directions to be used on $c_i$.
\item
When computing $V^f_i$ during the sweeping, we do not employ
the procedure described in Remark \ref{rem:regular_FSM_update} 
to find the relevant quadrant.  
Instead, we use ``sweep-directional updates''; e.g., 
if the current sweeping direction is from the NE, we  
always use the update based on the northern and eastern neighboring fine gridpoints.
The advantage is that we have already processed both of them within the same sweep.
\end{enumerate}
\end{remark}


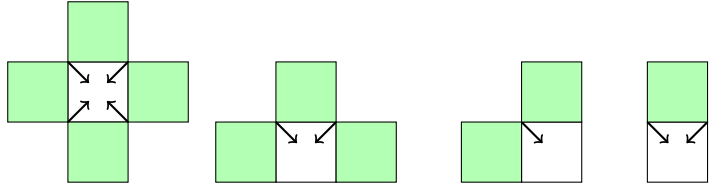
\begin{figure}[hhhh]
\center{
$
\begin{array}{lcrr}

\begin{tikzpicture}[scale=0.4]
\draw (0,0) rectangle +(2,2); 
\draw[thick,->](0,0)--(.7,.7);
\draw[thick,->](2,0)--(1.3,.7);
\draw[thick,->](0,2)--(.7,1.3);
\draw[thick,->](2,2)--(1.3,1.3);

\filldraw[fill = green!30!white, draw = black] (-2,0) rectangle +(2,2) ;
\filldraw[fill = green!30!white, draw = black] (0,2) rectangle +(2,2) ;
\filldraw[fill = green!30!white, draw = black] (2,0) rectangle +(2,2) ;
\filldraw[fill = green!30!white, draw = black] (0,-2) rectangle +(2,2) ;
\end{tikzpicture}

&

\begin{tikzpicture}[scale=0.4]
\draw (0,0) rectangle +(2,2); 
\draw [thick,->] (0,2)--(.7,1.3);
\draw[thick,->](2,2)--(1.3,1.3);

\filldraw[fill = green!30!white, draw = black] (-2,0) rectangle +(2,2) ;
\filldraw[fill = green!30!white, draw = black] (0,2) rectangle +(2,2) ;
\filldraw[fill = green!30!white, draw = black] (2,0) rectangle +(2,2) ;
\end{tikzpicture}

&
\hspace*{5mm}
\begin{tikzpicture}[scale=0.4]
\draw (0,0) rectangle +(2,2); 
\draw[thick,->](0,2)--(.7,1.3);

\filldraw[fill = green!30!white, draw = black] (-2,0) rectangle +(2,2) ;
\filldraw[fill = green!30!white, draw = black] (0,2) rectangle +(2,2) ;
\end{tikzpicture}

&

\hspace*{5mm}
\begin{tikzpicture}[scale=0.4]
\draw (0,0) rectangle +(2,2); 
\draw [thick,->] (0,2)--(.7,1.3);
\draw[thick,->](2,2)--(1.3,1.3);

\filldraw[fill = green!30!white, draw = black] (0,2) rectangle +(2,2) ;
\end{tikzpicture}

\end{array}
$
}
\caption{
{\footnotesize
Sweeping directions on $c_i$ chosen based on the neighboring cells
accepted earlier than $c_i$ (shown in green).  Note that 2 sweeping directions are 
conservatively used in the case of a single accepted neighbor.}}
\label{fig:coarse_sweep_choice}
\end{figure}

\iffullversion
Before discussing the computational cost and accuracy consequences of these 
implementation choices, we illustrate the algorithm on a specific example:
a checkerboard domain with the speed function $F=1$ in white and $F=2$ in black checkers,
and the exit set is a single point in the center of the domain see Figure \ref{fig:checkers_FMSM}).
This example was considered in detail in \cite{ObTaVlad}; 
the numerical results and the performance of our new methods
on the related test problems are described in detail in section \ref{ss:checkers}.
As explained in Remark \ref{rem:sweeping_in_FMSM}.2, 
we do not sweep until convergence on each cell; 
e.g., the sweeps for the cell \# 1 in Figure \ref{fig:checkers_FMSM} will be from
northwest and southwest, while the cell \#14 will be swept from northeast only.

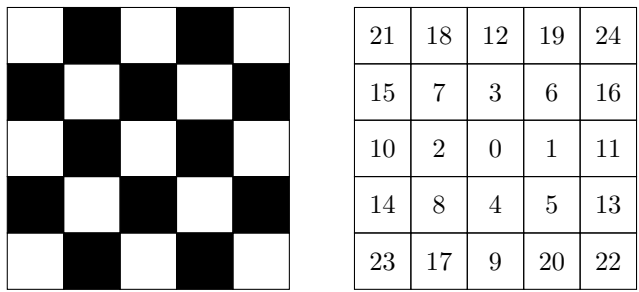
\begin{figure}[hhhh]
\center{
$
\begin{array}{cc}

\begin{tikzpicture} [scale = 1.5]
\draw (0,0) rectangle (.5,.5)  (.5,.5) rectangle +(.5,.5)  (1,0) rectangle +(.5,.5)  (-1,0) rectangle +(.5,.5)  (-.5,.5) rectangle +(.5,.5);
\draw (-.5,.5) rectangle (.5,.5) (-.5,-.5) rectangle +(.5,.5)  (-1,1) rectangle +(.5,.5) (-1,-1) rectangle +(.5,.5)  (0,-1) rectangle +(.5,.5);
\draw (0,1) rectangle +(.5,.5)  (1,1) rectangle +(.5,.5)  (1,-1) rectangle +(.5,.5)  (.5,-.5) rectangle +(.5,.5);

\filldraw[fill=black, draw= black] (.5,0) rectangle +(.5,.5)  (0,.5) rectangle +(.5,.5)  (-.5,0)rectangle +(.5,.5)  (0,-.5) rectangle +(.5,.5);
\filldraw[fill=black, draw= black] (-1,.5) rectangle +(.5,.5)  (-1,-.5) rectangle +(.5,.5)  (.5,1)rectangle +(.5,.5)  (.5,-1) rectangle +(.5,.5);
\filldraw[fill=black, draw= black] (-.5,1) rectangle +(.5,.5)  (-.5,-1) rectangle +(.5,.5)  (1,-.5)rectangle +(.5,.5)  (1,.5) rectangle +(.5,.5);
\end{tikzpicture}

&

\hspace*{5mm}
\begin{tikzpicture}[scale = 0.75]
\foreach \x in {1,2,...,5}
\foreach \y in {1,...,5}
{
\draw (\x,\y) +(-.5,-.5) rectangle ++(.5,.5);
}
\draw(1,1) node{23}   (1,2) node{14}  (1,3) node{10}  (1,4) node{15}  (1,5) node{21};
\draw(2,1) node{17}   (2,2) node{8}  (2,3) node{2}  (2,4) node{7}  (2,5) node{18};
\draw(3,1) node{9}   (3,2) node{4}  (3,3) node{0}  (3,4) node{3}  (3,5) node{12};
\draw(4,1) node{20}   (4,2) node{5}  (4,3) node{1}  (4,4) node{6}  (4,5) node{19};
\draw(5,1) node{22}   (5,2) node{13}  (5,3) node{11}  (5,4) node{16}  (5,5) node{24};
\end{tikzpicture}

\end{array}
$
}
\caption{
{\footnotesize
Left: The $5 \times 5$ checkerboard domain with a source point in the slow checker in the center.
Right: The order of cell-acceptance in Part I of FMSM, assuming that the size of cells and checker is the same.}}
\label{fig:checkers_FMSM}
\end{figure}
\else
In \cite{ChacVlad} we also illustrate the cell-acceptance order in FMSM 
for a checkerboard example similar to those in section \ref{ss:checkers}.
\fi

The resulting algorithm clearly introduces additional numerical errors -- 
in all but the simplest examples, the FMSM's output
is not the exact solution of the discretized system \eqref{eq:Eik_discr}
on $X^f$.  We identify three sources of additional errors:
the fact that the coarse grid computation does not capture all cell 
interdependencies, and the two cell-sweeping modifications described in 
Remark \ref{rem:sweeping_in_FMSM}.  Of these, the first one is by far 
the most important.  Focusing on the fine grid, we will say that the
cell $c_i$ {\em depends on} $c_j \in N^c(c_i)$ if there exists a gridpoint
$\x^f_k \in c_i$ such that $U^f_k$ directly depends on 
$U^f_l$ for some gridpoint $\x^f_l \in c_j$.
In the limit, as $h^f \to 0$, this means that $c_i$ depends on $c_j$ if
there is a characteristic going from $c_j$ into $c_i$ (i.e., at least
a part of $c_i$'s boundary shared with $c_j$ is {\em inflow}).
For a specific speed function $F$ and a fixed cell-decomposition $Z$,
a causal ordering of the cells need not exist at all.  
As shown in Figure \ref{fig:cell_interdep}, two cells may easily 
depend on each other.
This situation arises even for problems where $F$ is constant on each 
cell; see Figure \ref{fig:checkers_computed}.
Moreover, if the cell refinement is performed 
uniformly, such non-causal interdependencies will be present even as
the cell size $h^c \to 0$.  This means that every algorithm
processing each cell only once (or even a fixed number of times)
will unavoidably introduce additional errors at least for some speed 
functions $F$. 

\begin{figure}[hhhh]
\center{
\begin{tikzpicture}
[scale = 2]
\draw[thick] (-1,1) rectangle +(2,-2);
\draw (.6,0) circle(.15 cm);

\draw[-] (.6,-.15)--(2.73,-.96);
\draw[-] (.6,.15)--(2.73,.96);
\draw (3,0) circle(1.0cm);
\draw[dotted](3,1)--(3,-1);
\draw [red,very thick,->] (3.27,.96) .. controls +(left:0.8cm) and +(up:1cm)  .. (3.27,-.1);
\draw [red,very thick,->] (3.27,-.1) .. controls +(down:0.5cm) and (2.97, -.55)  .. (2.47,-.75);

\draw(2.6,0) node{$c_i$};
\draw(3.6,0) node{$c_j$};

\foreach \x in {-1,-.6,-.2,.2,.6,1}
	\draw[dotted](\x,-1)--(\x,1);
\foreach \y in {-1,-.6,-.2,.2,.6,1}
	\draw[dotted] (-1,\y)--(1,\y);
\end{tikzpicture}
}
\caption{
{\footnotesize
Two mutually dependent cells.}}
\label{fig:cell_interdep}
\end{figure}
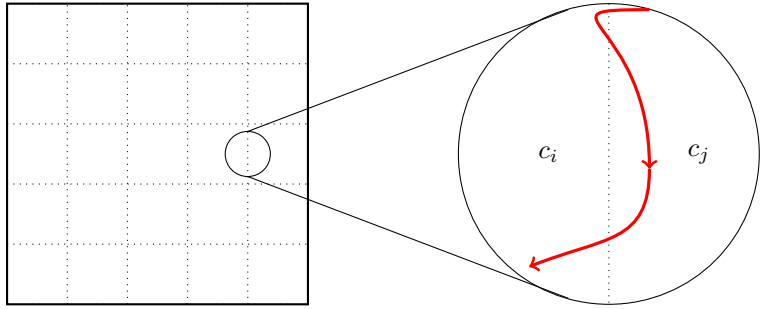
  
One possible way around this problem is to use the characteristic's 
vacillations between $c_i$ to $c_j$ to determine the total number 
of times that these cells should be alternately processed with FSM.
This idea is the basis for heap-cell methods described in the next section. 
However, for the FMSM we simply treat these ``approximate cell-causality'' errors
as a price to pay for the higher computational efficiency.
Our numerical experiments with FMSM showed that,  as $h^c \to 0$, the effects due to
the approximate cell-causality dominate the errors stemming from using 
a finite (coarse-grid determined) number of sweeps. 
I.e., when the cells are sufficiently small, running FSM to convergence does not 
decrease the additional errors significantly, but does noticeably increase 
the computational cost.  The computational savings due to our use of 
``sweep-directional updates'' are more modest
(we simply avoid the necessity to examine/compare all neighbors of the updated node), 
but the numerical evidence indicates that it introduces only small 
additional errors and only near the shock lines, where $\nabla u$ is undefined.  
Since characteristics do not emanate from shocks, the accuracy price of this modification 
is even more limited if the errors are measured in $L_1$ norm.
In section \ref{s:experiments} we show that on most of $X^f$
the cumulative additional errors in FMSM
are typically much smaller than the discretization errors,
provided $h^c$ is sufficiently small.
  
The monotonicity property of the discretization ensures that 
the computed solution $V^{f}$ will always satisfy $V_i^{f} \geq U_i$.
The numerical evidence suggests that $V^{f}$ becomes closer to $U^f$
as $h^c$ decreases, though this process is not always monotone.

The computational cost of Part I is relatively small as long as $J \ll M.$
However, if $h^f$ and $M$ are held constant while $h^c$ decreases,
this results in $J \to M$, and the total computational cost of FMSM
eventually increases.  
As of right now, we do not have any method for predicting the optimal $h^c$
for each specific example.   Such a criterion would be obviously useful for realistic 
applications of our hybrid methods, and we hope to address it in the future.

\subsection{Label-correcting methods on cells}
\label{ss:FHCM}

The methods presented in this section also rely on
the cell-decomposition $Z=\{c_1, \ldots, c_J\}$, but do not use any coarse-level grid.  
Thus, $X=X^f$ and we will omit the superscripts $f$ and $c$
with the exception of $N^c(c_i)$ and $N^f(c_i)$.  
We will also use $h^c$ to denote the distance between the centers of two adjacent square cells.  
In what follows, we will also define ``cell values" to represent coarse-level information about cell 
dependencies.  Unlike in finite volume literature, here a ``cell value" is not necessarily synonymous with the average of a 
function over a cell.

\subsubsection{A generic cell-level convergent method}

To highlight the fundamental idea, we start with a simple ``generic'' version
of a label-correcting method on cells.
We maintain a list of cells to be updated, starting with the cells containing 
the exit set $Q$.  While the list is non-empty, we choose a cell to remove from it,
``process'' that cell (by any convergent Eikonal-solver), and use the new grid values near 
the cell boundary to determine which neighboring cells should be added to the list.
The criterion for adding cells to the list is illustrated in Figure \ref{fig:cell_inflow_boundary}.
All other implementation details are summarized in Algorithm \ref{alg:generic_LC_cells}.

\begin{figure}[hhhh]
\center{
\begin{tikzpicture}
[scale = 0.7]
\draw[-] (0,0) -- (0,-4);
\draw[-] (-4,0) -- (4,0);

\draw(2,-2) node{cell $c_k$};
\draw(-2,-2) node{cell $c_l$};
\filldraw (.15,-.7) circle (.1 cm);
\filldraw (.15,-1.4) circle (.1 cm);
\filldraw (.15,-2.1) circle (.1 cm);
\filldraw (.15,-2.8) circle (.1 cm);

\filldraw (-.15,-.7) circle (.1 cm);
\filldraw (-.15,-1.4) circle (.1 cm);
\filldraw (-.15,-2.1) circle (.1 cm);
\filldraw (-.15,-2.8) circle (.1 cm);

\draw(.55,-2.8) node{$\x_j$};
\draw(-.55,-2.8) node {$\x_i$};

\end{tikzpicture}
}
\caption{
{\footnotesize
Suppose that, as a result of processing the cell $c_l$ an eastern border value $V_i$ becomes updated.  
If $V_i < V_j$ and $\x_j \not \in Q$, the cell $c_k$ will be added to $L$ (unless it is already on the list).
}}
\label{fig:cell_inflow_boundary}
\end{figure}
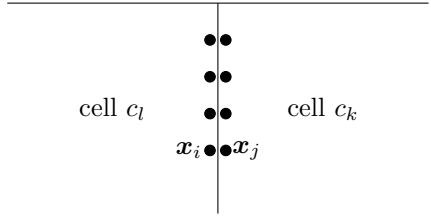

\begin{algorithm}[h]
\caption{Generic Label-Correcting on Cells pseudocode.}
\label{alg:generic_LC_cells}

\algsetup{indent=2em}
\begin{algorithmic}[1]

\STATE Cell Initialization:
\FOR{each cell $c_k$}
	\IF{$c_k \cap Q \neq \emptyset$}
		\STATE add $c_k$ to the list $L$
	\ENDIF
\ENDFOR

\STATE

\STATE Fine Grid Initialization:
\FOR{each gridpoint $\x_i$}
	\IF{$\x_i \in Q$}
		\STATE $V_i \gets q(\x_i)$
	\ELSE
		\STATE $V_i \gets \infty$
	\ENDIF
\ENDFOR
\STATE

\STATE Main Loop:
\WHILE{$L$ is nonempty}
	\STATE Remove a cell $c$ from the list $L$.
	\STATE Define a domain $\tilde{c} = c \cup N^f(c)$.
	\STATE Define the boundary condition as 
	\STATE $\qquad \qquad \tilde{q}(\x_i) = q(\x_i)$  on $c \cap Q$ and
	\STATE $\qquad \qquad \tilde{q}(\x_i) = V_i$  on $N^f(c).$
	\STATE Process $c$ by solving the Eikonal on $\tilde{c}$ using boundary conditions $\tilde{q}$.
	\FOR{each cell $c_k \in N^c(c) \backslash L$}
		\IF{$\quad \exists \, \x_i \in \left(c \cap N^f(c_k)\right) \;$ 
		    AND $\; \x_j \in \left(c_k \cap N(\x_i) \backslash Q \right)\;$ such that\\ 
		    $\qquad \quad( \; V_i$ has changed $\;\text{OR}\;$ 
					($\x_i \in Q \;$ AND $\; c$ is removed from $L$ for the first time) \, )\\
			$\qquad \quad$ AND $ \; (V_i < V_j) \quad$}
			\STATE $\qquad \qquad \qquad \qquad \qquad \qquad \qquad$ Add $c_k$ to the list $L$.
		\ENDIF
	\ENDFOR
\ENDWHILE

\end{algorithmic}
\end{algorithm}

It is easy to prove by induction that this method terminates in a finite number of steps;
in Theorem \ref{thm:convergence} we show that upon its termination $V=U$ 
on the entire grid $X$, regardless of the specific Eikonal-solver employed 
to process individual cells 
(e.g., FMM, FSM, LSM or any other method producing the exact solution to \eqref{eq:Eik_discr} will do).
We emphasize that the fact of convergence also does not depend on the specific
selection criteria for the next cell to be removed from $L$.
However, even for a fixed cell-decomposition $Z$, the above choices will 
significantly influence the total number of 
list removals and the overall computational cost of the algorithm.
One simple strategy is to implement $L$ as a queue, adding cells
at the bottom and always removing from the top, thus mirroring the logic
of Bellman-Ford algorithm.  In practice, we found the version described in 
the next subsection to be more efficient.

\begin{theorem}  
\label{thm:convergence}
The generic cell-based label-correcting method converges 
to the exact solution of system \eqref{eq:Eik_discr}.
\end{theorem}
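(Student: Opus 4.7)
The plan is to establish two facts: (i) the algorithm terminates after finitely many cell-removals from $L$, and (ii) at termination, $V_i = U_i$ for every $\x_i \in X$.

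For (i), I would first verify that the invariant $V \geq U$ is preserved throughout. Initialization satisfies it trivially. Each cell-processing step solves \eqref{eq:Eik_discr} on $\tilde{c}$ with boundary data $\tilde{q}$ built from values already satisfying $V \geq U$; by monotonicity (Remark \ref{rem:discr_properties}), the local solve cannot produce values below $U$, and we only overwrite $V_i$ when the new value is strictly smaller. Hence each $V_i$ is weakly decreasing and bounded below by $U_i$. Termination then follows from interpreting the algorithm as a label-correcting method on the underlying graph reformulation of \eqref{eq:Eik_discr}: each cell-processing amounts to finitely many Gauss-Seidel relaxations of fine gridpoints, and the generic label-correcting convergence result cited in Section \ref{ss:intro_graphs}, together with the causality of \eqref{eq:Eik_discr}, guarantees that only finitely many strict decreases can ever occur. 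Once no cell-removal produces a change, no new cells are enqueued and $L$ empties.

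For (ii), I would use strong induction on the \emph{dependency depth} $d(\x_i)$, defined as in Section \ref{ss:FS} to be the length of the longest directed path in the upwind dependency digraph of the exact solution $U$ from $\x_i$ to $Q$. By causality this digraph is acyclic, so $d(\x_i)$ is a well-defined non-negative integer. The base case $d(\x_i)=0$ means $\x_i \in Q$, so $V_i = q_i = U_i$ from initialization and is never overwritten. For the inductive step, assume $V_j = U_j$ at termination for all $\x_j$ with $d(\x_j) < d(\x_i)$, and let $c$ be the cell containing $\x_i$. Every upwind neighbor $\x_j \in N^f(c)$ of $\x_i$ in the $U$-dependency graph has strictly smaller depth, hence eventually holds the value $U_j$. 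The enqueueing criterion in Algorithm \ref{alg:generic_LC_cells} guarantees that $c$ is re-added to $L$ each time such a $V_j$ strictly decreases, as well as the first time a neighboring cell whose border overlaps $Q$ is processed. Consequently $c$ is processed at least once \emph{after} all upwind boundary values of $\x_i$ have stabilized at their correct values; at that processing, the local Eikonal solver produces $V_i = U_i$, and the invariant $V \geq U$ precludes any later change.

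The main technical obstacle is the bookkeeping in the inductive step: one must check that the \emph{last} time the upwind boundary of $c$ attains its correct $U$-values, the cell $c$ genuinely gets processed again with that boundary data. This is precisely the role of the two branches of the enqueueing criterion — the ``$V_i$ has changed'' branch handles upwind neighbors outside $Q$, while the ``$\x_i \in Q$ and $c$ was removed from $L$ for the first time'' branch covers the corner case where the correct boundary value $U_j = q_j$ was already in place before the adjacent cell was ever processed, so no strict decrease would trigger re-enqueueing. Once these cases are matched to the induction hypothesis and the local Eikonal solver's exactness on $\tilde{c}$ is invoked, both termination and correctness follow.
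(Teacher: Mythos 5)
Your proof is correct and rests on the same three pillars as the paper's argument: the monotone invariant $V \geq U$ preserved by the exact local solver, the acyclicity of the dependency digraph of \eqref{eq:Eik_discr}, and the observation that the enqueueing criterion of Algorithm \ref{alg:generic_LC_cells} fires at the moment the last relevant inflow-boundary value of a cell settles at its exact value (with the ``first removal'' branch covering inflow data already correct from initialization, which you correctly single out). The one structural difference is the induction variable: you induct on the longest-path depth of $\x_i$ in the gridpoint-level dependency digraph, whereas the paper inducts on the ``cell transition distance'' $d(\x)$ --- the maximal number of cell-boundary crossings along dependency chains --- and finalizes whole layers $D_s$ at once. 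Both inductions are valid; the paper's coarser variable is tailored to the algorithm and yields as a by-product that each gridpoint is finalized after roughly $d(\x)$ processings of its cell, which is the quantity (tracked as AvHR in the experiments) that governs the method's cost, while your finer induction proves convergence without exposing that cell-level bound. Two imprecisions are worth tightening, though neither is fatal: first, the claim that $c$ is re-added ``each time such a $V_j$ strictly decreases'' is too strong, since the criterion also requires $V_j < V_b$ for some neighbor $\x_b$ inside $c$; what you actually need, and what holds by causality ($V_j = U_j < U_b \leq V_b$ for the direct dependent $\x_b \in c$), is that the criterion fires when $V_j$ reaches its final value $U_j$ --- and the relevant boundary set should be the points of $N^f(c)$ that are \emph{direct} upwind dependencies of points of $c$ lying on $\x_i$'s dependency chains, not merely points of the dependency subgraph that happen to lie in $N^f(c)$. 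Second, your termination sketch borrows the graph-theoretic label-correcting result, whose finiteness argument (finitely many path costs) does not transfer verbatim to real-valued grid labels; the paper likewise leaves termination as an assertion outside the theorem, so this is not a gap relative to the paper, but it would need its own induction to be made rigorous.
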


\begin{proof}

\noindent
First we describe notation and recall from section \ref{ss:FS} the dependency digraph $G$.

$\bullet \,$ We say $\x_j$ \emph{depends on} $\x_i$ if $U_i$ is used to compute $U_j$ 
(see discussion of formulas \eqref{eq:one_sided_update}  and \eqref{eq:two_sided_update}).

$\bullet \,$ $\Gamma_{\blds{x}} = \{$nodes in $G$ on which $\x$ depends directly$\}$.  
For each node $\x$,  the set $\Gamma_{\blds{x}}$ will have 0, 1, or 2 elements.
If $\x \in Q$, then $\Gamma_{\blds{x}}$ is empty.  If  a one-sided update was used to compute $U(\x)$ 
(see formula \eqref{eq:one_sided_update}), then there is only one element in $\Gamma_{\blds{x}}$.

$\bullet \,$ $G_{\blds{x}}$ denotes the subgraph of $G$ that is reachable from the node $\x$.

$\bullet \,$ We define the
cell transition distance 
$d(\x) = max_{\x_i \in \Gamma_{\blds{x}}} \{d(\x_i) +$ cell$\_$dist$(\x,\x_i)  \}$,\\ 
where 
cell$\_$dist$(\x,\x_i) = 0$ if both $\x$ and $\x_i$ are in the same cell and $1$ otherwise.  
Note that in general $d(\x) < M$, but in practice $\max d(\x)$ is typically much smaller.  
In the continuous limit 
$d(\x)$ is related to the number of times a characteristic that reaches $\x$ crosses cell boundaries.

$\bullet \,$ $D_s = \{ \x \in G \mid d(\x) = s \}$.  See Figure \ref{fig:dependency_digraph} 
for an illustration of $G_{\blds{x}}$ split into $D_0, D_1, \ldots , D_{d(\x)}$.

$\bullet \,$ $\widetilde{D}_{s} = 
\{\x_j \in D_s \mid \exists \x_i \in D_{s-1}$ such that $\x_j$ depends on $\x_i \, \}$, 
i.e., the set of gridpoints in $D_s$ that depend on a gridpoint in a neighboring cell.
Note that $\widetilde{D}_{0} = \emptyset$.

$\bullet \,$ $\widehat{D}_{s} = 
\{\x_i \in D_s \mid \exists \x_j \in D_{s+1}$ such that $\x_j$ depends on $\x_i \, \}$, 
i.e., the set of gridpoints in $D_s$ that influence a gridpoint in a neighboring cell.

$\bullet \,$ $\star$ denotes any method that exactly solves the Eikonal on $\tilde{c}$ 
(see line 20 of algorithm \ref{alg:generic_LC_cells}).

Recall that by the monotonicity property of the discretization \eqref{eq:Eik_discr}, 
the temporary labels $V_j$ will always be 
greater than or equal to $U_j$ throughout algorithm \ref{alg:generic_LC_cells}. 
Moreover, once $V_j$ becomes equal to $U_j$, this temporary label will not change 
in any subsequent applications of $\star$ to the cell $c$ containing $\x_j$.
The goal is to show that $V_j = U_j$ for all $\x_j \in X$ upon the termination of 
Algorithm \ref{alg:generic_LC_cells}.

\begin{figure}

\begin{tikzpicture}[scale = .7,>=stealth]
 \tikzstyle{every state}=[draw, shape=circle, inner sep=0mm, minimum size = 6mm]

\node[label=left:$\x$] (a) at (0,3) {};

\node (b) at (1,4.2) {};
\node (c) at (2, 3.6) {};
\node (d) at (2,4.8) {};
\node (e) at (3,4.2) {}; 

\node(f) at (5,5) {}; 
\node (g) at (6,2) {}; 
\node (h) at (6.5,2.8) {}; 
\node (i) at (6.5,1) {}; 

\node (j) at (10,4) {}; 
\node (k) at (9.5,2) {};

\node[label=right:$\x_q \in Q$] (q) at (18.5,2.5) {};
\node (l) at (17.5,4) {}; 
\node (m) at (17,1.5) {};

\foreach \p in {a,b,c,d,e,f,g,h,i,j,k,l,m,q}
\fill  (\p) circle (.15cm);

\path[->,thick](a) edge (b)
	(a) edge (g)
	(b) edge (c)
	(b) edge (d)
	(d) edge (e)
	(c) edge (e)
	(d) edge (f)
	(e) edge (f)
	(g) edge (h)
	(g) edge (i)
	(c) edge (j)
	(f) edge (j)
	(f) edge (h)
	(h) edge (k)
	(i) edge (k);

\path[->,thick] (j) edge (12.5,5) 
	(j) edge (12.5,3)
	(k) edge (12.5,1)

	(15.3,5) edge (l)
	(15.3,3) edge (l)
	(15.3,1) edge (m)
	(l) edge (q)
	(m) edge (q);

\draw(2,6) node{$D_{s+1}$};
\draw(6,6) node{$D_{s}$};
\draw(10,6) node{$D_{s-1}$};
\draw(18,6) node{$D_0$};

\draw[-](4,6)--(4,0);
\draw[-](8,6)--(8,0);
\draw[-](12,6)--(12,0);
\draw[-](16.5,6)--(16.5,0);

\filldraw (13.3,3) circle(.05 cm);
\filldraw (13.8,3) circle(.05 cm);
\filldraw (14.3,3) circle(.05 cm);

\filldraw (13.3,5) circle(.05 cm);
\filldraw (13.8,5) circle(.05 cm);
\filldraw (14.3,5) circle(.05 cm);

\filldraw (13.3,1) circle(.05 cm);
\filldraw (13.8,1) circle(.05 cm);
\filldraw (14.3,1) circle(.05 cm);

\end{tikzpicture}
\caption{A schematic view of dependency digraph $G_{\blds{x}}$.}
\label{fig:dependency_digraph}
\end{figure}

To prove convergence we will use induction on $s$.  
First, consider $s=0$ and note that every cell $c$ containing some part of $D_0$ is put in $L$ 
at the time of the cell initialization step 
of the algorithm.  When $c$ is removed from $L$ and $\star$ is applied to it, 
every $\x \in D_0 \cap c$ will obtain its final value $V(\x) = U(\x)$ because 
$G_{\blds{x}}$ contains no gridpoints in other cells by the definition of $D_0$.

Now suppose all $\x \in D_k$ already have $V(\x) = U(\x)$ for all $k \leq s$.
We claim that:

\noindent
1) If a cell $c$ contains any $\x \in D_{s+1}$ such that $V(\x) > U(\x)$, then this cell is guaranteed 
to be in $L$ at the point in the algorithm when the last $\x_i \in D_s \cap N^f(c)$ receives its final update.

\noindent
2)  The next time 
$\star$ is applied to $c$, $V(\x)$ will become equal to $U(\x)$ for all $\x \in D_{s+1} \cap c$.

To prove 1), suppose $D_{s+1} \cap c \neq \emptyset$ and note that there exist 
$\x_j \in \widetilde{D}_{s+1} \cap c$ and $\x_i \in \Gamma_{\blds{x}_j}$ with 
$\x_i \in \widehat{D_s} \cap \hat{c}$ for some neighboring cell $\hat{c}$.  
Indeed, if each gridpoint $\x$ $\in$ $D_{s+1} \cap c$ 
were to depend only on those in $D_{s+1}$ (gridpoints within the same cell) 
and/or those in $D_{k}$ for $k < s$, this would contradict $\x \in D_{s+1}$
(it is not possible for $ \Gamma_{\blds{x}} \subset \cup_{k<s} D_k$; 
see Figure \ref{fig:dependency_digraph}).  
At the time the \emph{last such} $\x_i$ receives its final update,
we will have $V_j \geq U_j > U_i = V_i$ since $\x_i \in \Gamma_{\blds{x}_j}$.
Thus, $c$ is added to $L$ (if not already there) as a result of the add criterion 
in Algorithm \ref{alg:generic_LC_cells}.  

To prove 2), we simply note that all nodes in 
$\left( G_{\blds{x}} \backslash c  \right) \subset \left( \bigcup_{k=0}^{s} D_k \right)$ 
will already have correct values at this point.

\end{proof}

\begin{remark}
\label{rem:subgraph_LC_algorithm}
We note that the same ideas are certainly applicable to finding shortest paths on graphs.
The Algorithm \ref{alg:generic_lc} can be similarly modified using a collection of non-overlapping subgraphs 
instead of cells, but so far we were unable to find any description of this approach
in the literature.
\end{remark}

\subsubsection{Heap-Cell Method (HCM)}
To ensure the efficiency of cell-level label-correcting algorithms, 
it is important to have the ``influential'' cells (on which most others depend)
processed as early as possible.
Once the algorithm produces correct solution $V=U$ on those cells, 
they will never enter the list again, and their neighboring cells will 
have correct boundary conditions at least on a part of their boundary.
The same logic can be applied repeatedly by always selecting for removal 
the most ``influential'' cells currently on the list.  
We introduce the concept of ``cell values'' $V^c_k = V^c(c_k)$
to estimate the likelihood of that cell influencing others 
(the smaller is $V^c_k$, the more likely is $c_k$ to influence
subsequent computations in other cells, 
and the higher is its priority of removal from the list).
In Fast Marching-Sweeping Method of section \ref{ss:FMSM}, 
the cell values were essentially defined 
by running FMM on the coarse grid.  That approach is not very
suitable here, since each cell $c_k$ might enter the list more than once
and it is important to re-evaluate $V^c_k$ each time this happens.
Instead, we define and update $V^c_k$ using the boundary values in the adjacent cells, 
and modify Algorithm \ref{alg:generic_LC_cells} to use the cell values as follows:
\begin{enumerate}
\item
Amend the cell initialization to set 
$$
V^c_k \gets \max\limits_{\x_i \in (c_k \cap Q)} q(\x_i)
\quad \text{ or } \quad 
V^c_k \gets \infty \text{ if } c_k \cap Q = \emptyset.
$$

\item 
Always remove and process the cell with the smallest value
currently on the list.  Efficient implementation 
requires maintaining $L$ as a heap-sort data structure --
hence the name of ``Heap-Cell Method'' (HCM) for the resulting 
algorithm.

\item
After solving the Eikonal on $c$, 
update the cell values for all $c_k \in N^c(c)$ 
(including those already in $L$).  Let $\bb_k$
be a unit vector pointing from the center of $c$ 
in the direction of $c_k$'s center and 
suppose that $\x_i$ has the largest current value among the 
gridpoints inside $c$ but adjacent to $c_k$; i.e.,
$\x_i = \argmax\limits_{\x_j \in (c \cap N^f(c_k))} V_j$.
Define $\y_i =\x_i + \frac{h + h^c}{2} \bb_k$.
Then 
\begin{eqnarray}
\label{eq:cell_value}
\widetilde{V}^c_k &\gets& V_i + \frac{(h+h^c)/2}{F\left( \y_i \right)};\\
\nonumber
V^c_k &\gets& \min \left( V^c_k, \, \widetilde{V}^c_k \right).
\end{eqnarray}

\end{enumerate}

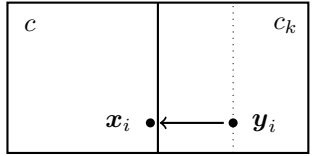
\begin{figure}[h]
\center
{

\begin{tikzpicture}[scale=2]

 \tikzstyle{every state}=[draw, shape=circle, inner sep=0mm, minimum size = 1mm]

\node[label=left:$\x_i$] (a) at (.95,.2) {};
\node[label=right:$\y_i$] (b) at (1.5,.2) {};

\draw(.15,.85) node{$c$};
\draw(1.85,.85) node{$c_k$};

\foreach \p in {a,b}
\fill  (\p) circle (.03cm);

\draw [thick] (0,0) rectangle (2,1);
\draw [thick,-] (1,0)--(1,1);
\draw[dotted] (1.5,0)--(1.5,1);
\path [thick,->] (b) edge (a);

\end{tikzpicture}
}
\caption{An illustration corresponding to equation \ref{eq:cell_value} 
(the estimate for a cell value) 
with $\bb_k = (1,0)$.}

\end{figure}


\begin{remark}
\label{rem:cell_value_updates}
We note that, in the original Dijkstra's and Bellman-Ford methods on graphs, 
a neighboring node's temporary label is updated {\em before} that node 
is added to $L$.   In the Heap-Cell Method,
the cell value is also updated before adding that cell to the list,
but the grid values within that cell are updated {\em after} it is removed 
from $L$.

Regardless of the method used to compute cell values, they can only 
provide an estimate of the dependency structure.  
As explained in section \ref{ss:FMSM}, a causal cell-ordering need not exist
for a fixed $Z$ and a general speed functions $F$.
Thus, $V^c_k < V^c_i$ does not exclude the possibility of 
$c_k$ depending on $c_i$, and we do not use cell values to decide 
which neighboring cells to add to the list -- 
this is still done based on
the cell boundary data; see Algorithm \ref{alg:generic_LC_cells}.  
As a result, the fact of convergence of such cell-level methods 
does not depend on the particular heuristic used to define cell values.
There are certainly many reasonable alternatives to formula 
\eqref{eq:cell_value} (e.g., a more aggressive/optimistic version can 
instead select $\x_i = \argmin V_j$ on the boundary; 
an average value of $F$ on $c_k$ could also be used here;
or the distance to travel could be measured from $\x_i$ to 
the center of $c_k$, etc).  Empirically, formula 
\eqref{eq:cell_value} results in smaller computational cost
than the mentioned alternatives and it was therefore used 
in our implementation.
\end{remark}

\begin{remark}
\label{rem:SLF_cells}
The cell-values are useful even 
if $L$ is implemented as a queue and the cells are always removed from the top.
Indeed, $V^c_k$ can still be used to decide whether $c_k$ should be added
at the top or at the bottom of $L$. 
This is the SLF/LLL strategy previously used to solve the Eikonal PDE
on the grid-level (i.e., without any cells) by 
Polymenakos, Bertsekas, and Tsitsiklis \cite{PolyBerTsi}.
We have also implemented this strategy and found it to be fairly good,
but on average less efficient than the HCM described above.  
(The performance comparison is omitted to save space.)
The intuitive reason is that the SLF/LLL is based on mimicking the logic of 
Dijkstra's method, but without the expensive heap-sort data structures.
However, when $J \ll M$, the cost of maintaining the heap is much smaller
than the cost of occasionally removing/processing less influential cells 
from $L$.
\end{remark}

To complete our description of HCM, we need to specify how the Eikonal PDE
is solved on individual cells.  Since the key idea behind our hybrid methods
is to take advantage of the good performance of sweeping methods when
the speed is more or less constant, we follow the same idea as 
the FSM described in section \ref{ss:FS}, 
but with the following important distinctions from 
the basic version of algorithm \ref{alg:FSM}:  
\begin{enumerate}
\item
No initialization of gridpoint values $V_i$ is needed within $\tilde{c}$ --
indeed, the initialization is carried out on the full grid 
at the very beginning and if $c$ is removed from $L$ more than once,
the availability of previously computed $V_i$'s might only speed up 
the convergence on $c$.  Here we take advantage of the comparison 
principle for the Eikonal PDE: 
the viscosity solution cannot increase anywhere inside the cell
in response to decreasing the cell-boundary values.
\item
We use the Locking Sweeping version described in Remark \ref{rem:locking_FSM}.
\item
The standard FSM and LSM loop through the four sweep directions 
always in the same order.
In our implementation of HCM, 
we choose a different order for the first four sweeps to ensure that 
the ``preferred sweep directions'' (determined for each cell individually) 
are used before all others.
For all other sweeps after the first four, we 
revert to the standard loop defined in Algorithm \ref{alg:FSM_order}.
Of course, as in the standard FSM, the sweeps only continue as long
as grid values keep changing somewhere inside the cell.
The procedure for determining preferred sweep directions
is explained in Remark \ref{rem:HCM_preferred_sweeps}.
\end{enumerate}

\begin{remark}
\label{rem:HCM_preferred_sweeps}
Recall that in FMSM, the coarse grid information was used to determine
the sweep directions to use on each cell; 
see Remark \ref{rem:sweeping_in_FMSM} and Figure \ref{fig:coarse_sweep_choice}.
Similarly, in HCM we use the neighboring cells of $c_k$ that
were found to have newly changed $c_k$-inflow boundary 
since the last time $c_k$ was added to $L$.
We maintain four ``directional flags'' -- 
boolean variables initialized to {\tt FALSE}
and representing all possible preferred sweeping directions 
-- for each cell $c_k$ currently in $L$.
When a neighboring cell $c_l$ is processed/updated and is found to influence $c_k$, 
this causes two of $c_k$'s directional flags to be set to {\tt TRUE}.
To illustrate, supposing that $c_l$ is a currently-processed-western-neighbor of
$c_k$ (as in Figure \ref{fig:cell_inflow_boundary}). 
If the value of $\x_i \in c_l \cap N^f(c_k)$ has 
just changed and $V_i < V_j$, then both relevant preferred direction flags
in $c_k$ (i.e., both NW and SW) will be raised.
Once $c_k$ is removed from $L$ and processed, its directional flags are reset 
to {\tt FALSE}.

As explained in section \ref{sss:FHCM},
a better procedure for setting these directional flags could be built 
based on fine-grid information on the cell-boundary. 
However, we emphasize that the procedure for determining preferred directions will
not influence the ultimate output of HCM (since we will sweep on $c_k$
until convergence every time we remove it from $L$), though such preferred directions 
are usually useful in reducing the number of sweeps needed for convergence. 
\end{remark}

The performance and accuracy data in section \ref{s:experiments} shows that, 
for sufficiently small $h$ and $h^c$, 
HCM often outperforms both FMM and FSM on a variety of examples, including those with piecewise
continuous speed function $F$.
This is largely due to the fact that the average number of times a cell 
enters the heap tends to 1 
as $h^c \to 0$. 

\subsubsection{Fast Heap-Cell Method (FHCM)}
\label{sss:FHCM}

We also implement an accelerated version of HCM by using the following modifications:
\begin{enumerate}
\item
Each newly removed cell is processed using at most four iterations -- 
i.e., it is only swept once in each of the preferred directions instead 
of continuing to iterate until convergence.
\item
Directional flags in all cells containing parts of $Q$ are initialized to {\tt TRUE}.
\item
To further speed up the process, we use a ``Monotonicity Check'' 
on cell-boundary data to further restrict the preferred sweeping directions.
For concreteness, assume that $c_l$ and $c_k$ are related as in Figure \ref{fig:cell_inflow_boundary}.
If the grid values in 
$N^f(c_k) \cap c_l$  are monotone non-decreasing from north to south,
we set $c_k$'s NW preferred direction flag to {\tt TRUE}; if those grid values are 
monotone non-increasing we flag SW; otherwise we flag both NW and SW.
(In contrast, both HCM and FMSM are always using two sweeps in this situation;
see Figure \ref{fig:coarse_sweep_choice} and Remark \ref{rem:HCM_preferred_sweeps}.)
We note that the set $c \cap N^f(c_k)$  already had to be examined 
to compute an update to $V^c_k$ and the above 
Monotonicity Check can be performed simultaneously. 
\end{enumerate}

\iffullversion
The resulting Fast Heap-Cell Method (FHCM) 
is significantly faster than HCM, 
but at the cost of introducing additional errors (see section \ref{s:experiments}).  

The Monotonicity Checks result in a considerable 
increase in performance since, for small enough $h^c$, most cell boundaries become monotone.
However, generalizing this procedure to higher dimensional cells 
is less straightforward.  
For this reason we decided against using Monotonicity Checks 
in our implementation of HCM.
FHCM is summarized in Algorithm \ref{alg:FHCM}.
\begin{algorithm}[hhhh]
\caption{Fast Heap-Cell Method pseudocode.}
\label{alg:FHCM}
\algsetup{indent=2em}
\begin{algorithmic}[1]
\STATE Cell Initialization:
	\IF{cell $c_k \ni \x$ for $\x \in Q^{f}$}
		\STATE Add $c_k$ to the list $L$;
		\STATE Tag all four sweeping directions of $c_k$ as \emph{true};
		\STATE Assign a cell value $V_k^{cell} := 0$;		
	\ELSE
		\STATE Assign a cell value $V_k^{cell} := \infty$;
	\ENDIF

\STATE Fine Grid Initialization:
	\IF{$\x_i^{f} \in Q^{f}$}
		\STATE $V_i^{f} := q_i^{f};$
	\ELSE
		\STATE $V_i^{f} := \infty;$
	\ENDIF

\STATE
\WHILE{$L$ is nonempty}
	\STATE Remove cell at the top of $L$;
	\STATE Perform Non-Directional Fast Sweeping within the cell according to its directions marked \emph{true}, then set all directions to \emph{false} and:
	\FOR{Each cell border N,S,E,W}
		\IF{the value of a gridpoint $\x_i^{f}$ along a border changes and $V_i^{f} < V_j^{f}$ for $\x_j^{f}$ a neighboring gridpoint across the border}
			\STATE Add the cell $c_k$ containing $x_j^{f}$ onto $L$ if not already there.
			\STATE Update the planned sweeping directions 
for $c_k$ based on the location of the cell containing $\x_i^{f}$ (more about this later). 
		\ENDIF
		\STATE{Compute a value $v$ for the neighbor cell $c_k$ (more about this later)}
		\IF{$v < V^{cell}_k$}
			\STATE($V_k^{cell}$) $\gets v$
		\ENDIF
	\ENDFOR
\ENDWHILE

\end{algorithmic}
\end{algorithm}

As an illustration, we consider another $5 \times 5$ checkerboard example 
(this time with a fast checker in the center) and show the contents of the heap
in Figure \ref{fig:FHCM_progress}.
\begin{figure}[H]
\center{
$
\begin{array}{cc}
\includegraphics[scale = .46] {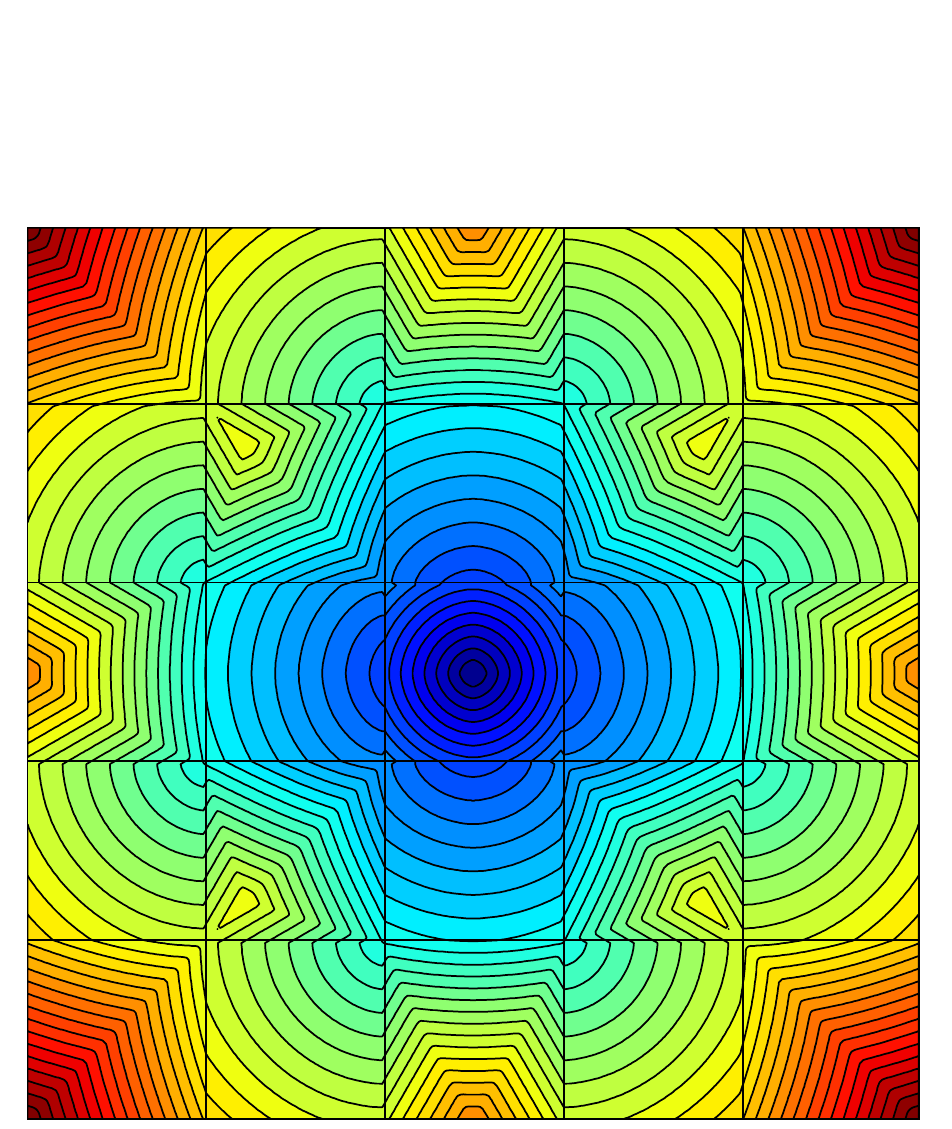}&
\hspace*{10mm}
\includegraphics[scale = .4] {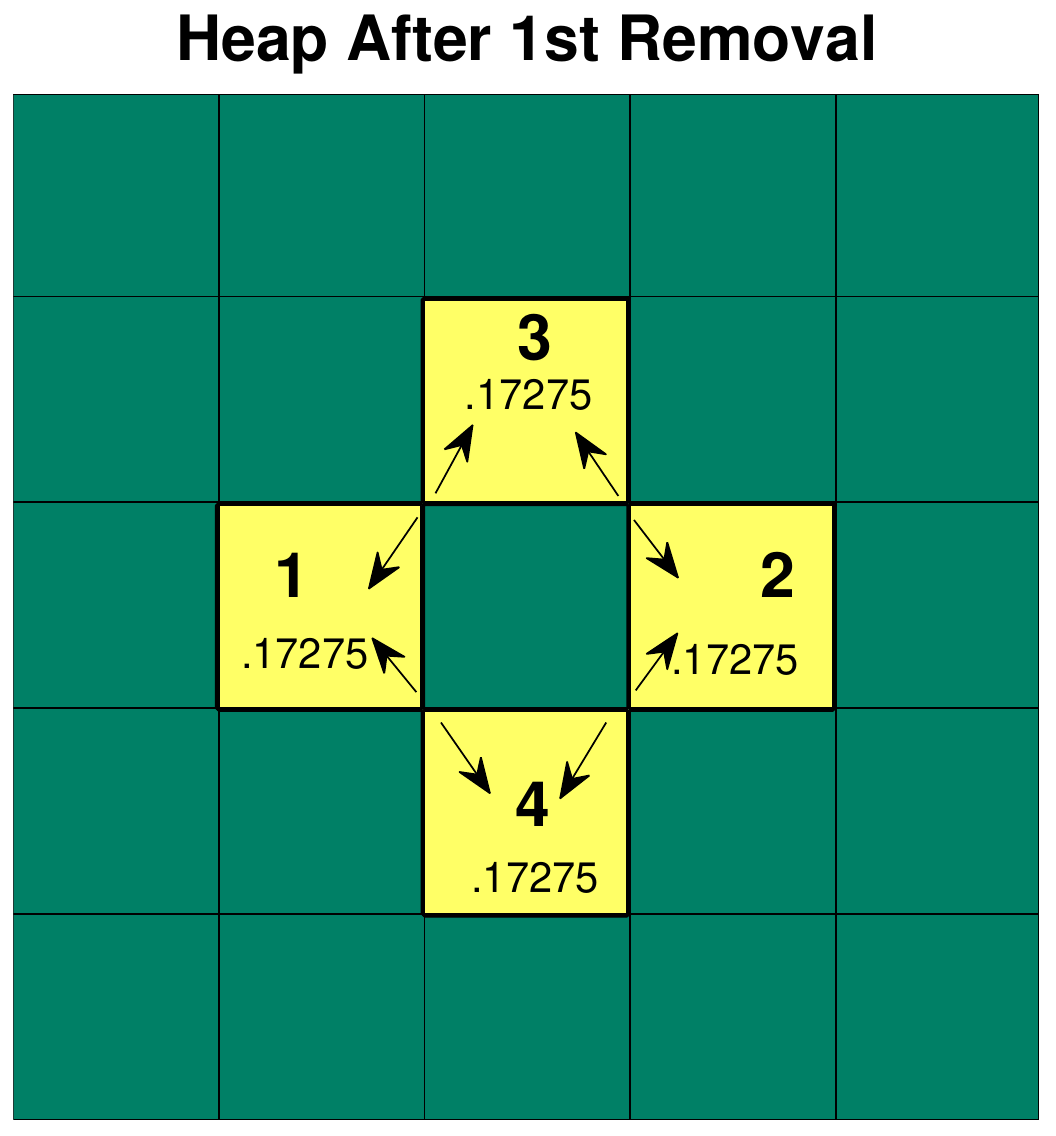}\\
A&B\\
\\
\includegraphics[scale = .4] {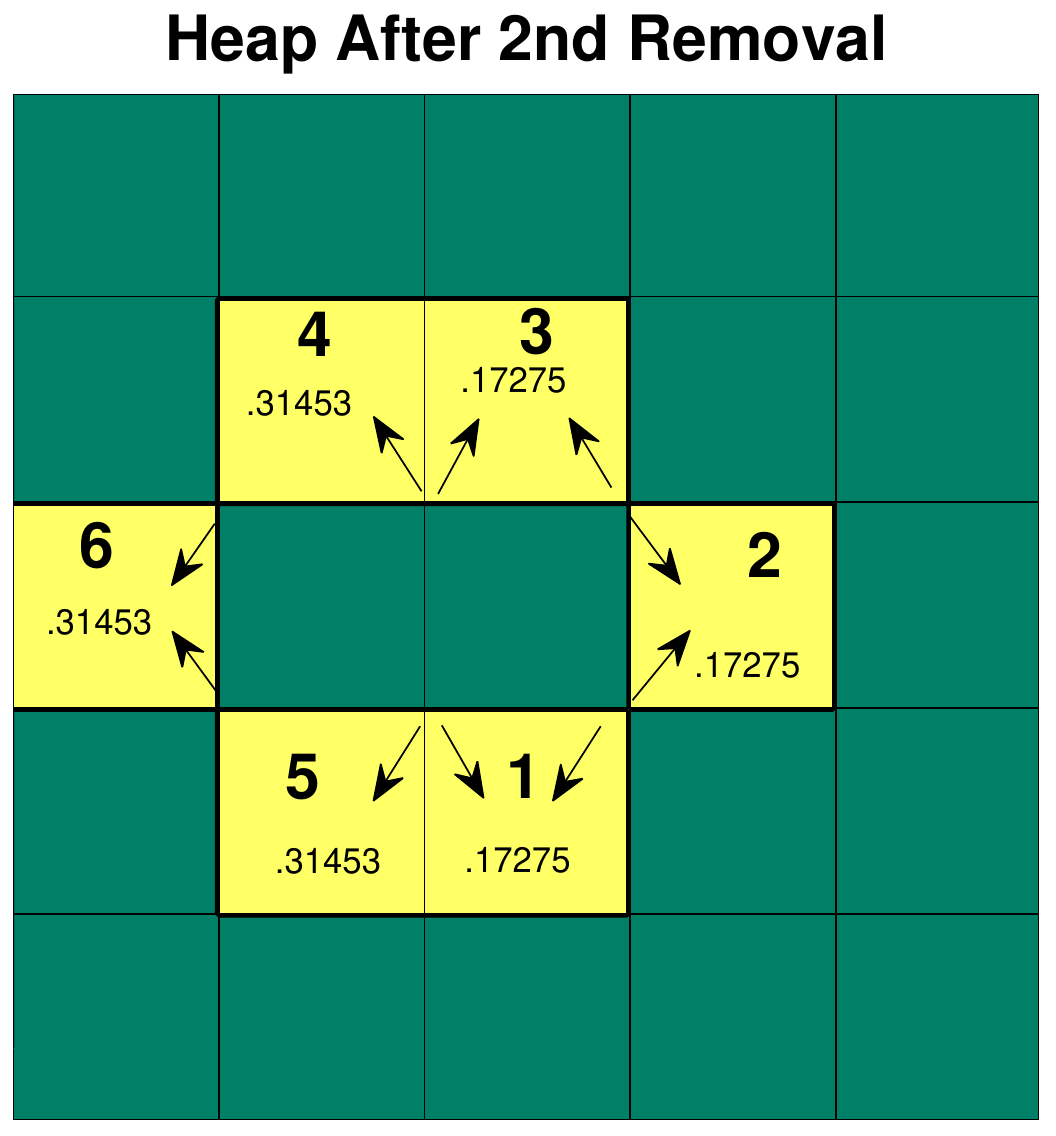}&
\hspace*{10mm}
\includegraphics[scale = .4]{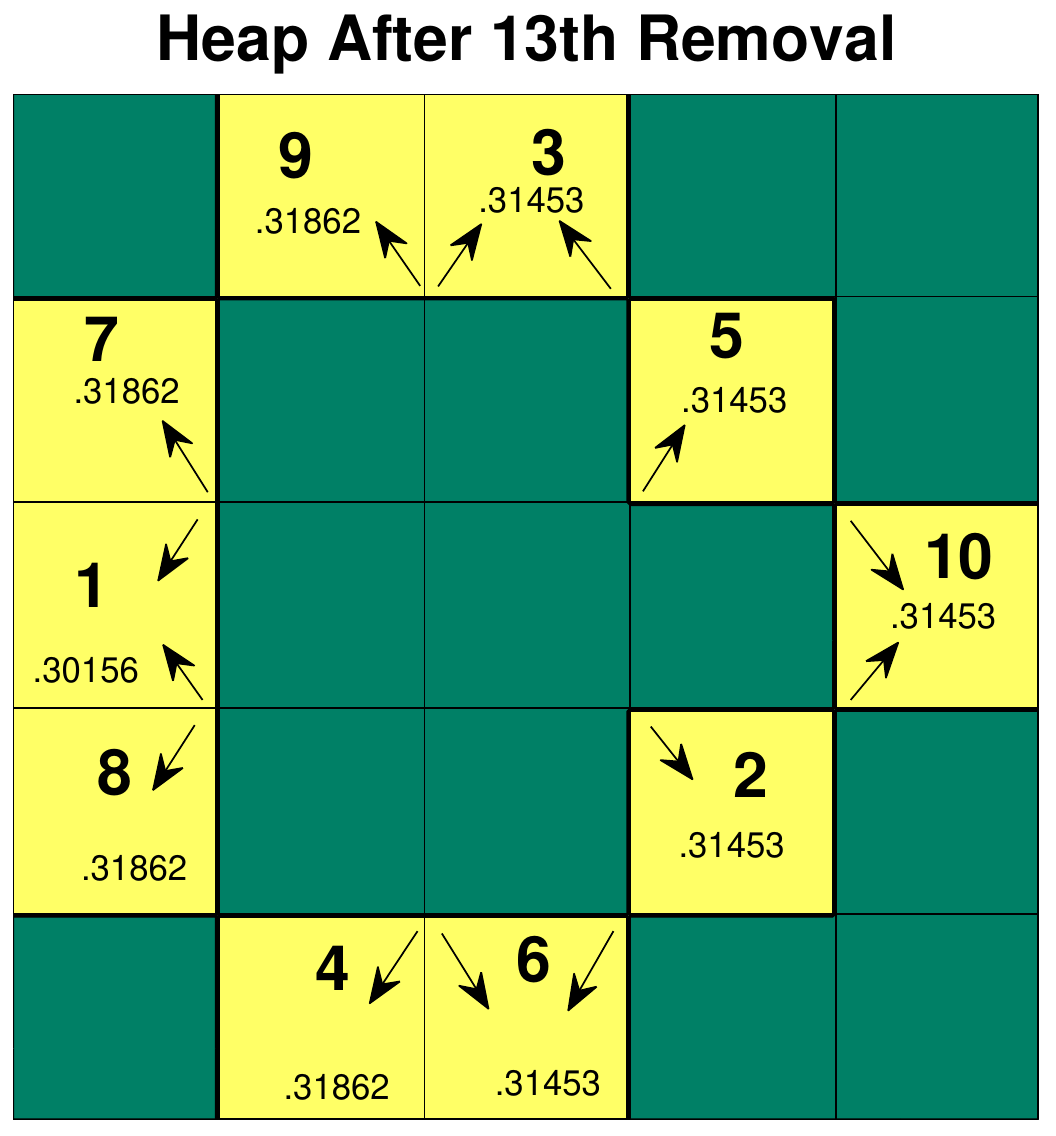}\\
C&D
\end{array}
$
}
\caption{
{\footnotesize
FHCM on a $5 \times 5$ checkerboard example. The level sets of the solution are shown in subfigure A.
The state of the cell-heap, current cell values and tagged preferred sweeping directions 
are shown after 1, 2, and 13 cell removals in subfigures B, C, and D.}}
\label{fig:FHCM_progress}
\end{figure}
Here we take the cells coinciding with checkers; 
finer cell-decompositions are numerically tested in section \ref{ss:checkers}.
The arrows indicate flagged sweeping directions for each cell,
and the smaller font is used to show the current cell values.  
Similarly to Dijkstra's method and FMM, the heap data structure is implemented 
as an array; the bold numbers represent each cell's index in this array.
In the beginning the central cell is the only one in $L$; once it is removed,
it adds to $L$ all four of its neighbors, all of them with the same cell value.
Once the first of these (to the west of the center) is removed,
it adds three more neighbors\footnote{We note that the cell indexed 4 after 
the first removal is indexed 2 immediately after the second.
This is the consequence of the performing {\em remove\_the\_smallest} using 
the {\em down\_heap} procedure in the standard implementation of the heap;
see \cite{SethSIAM}.}  (but not the central cell since there are 
no characteristics passing through the former into the latter).
This is similar to the execution path of FMSM, however, with heap-cell methods
the cells may generally enter the heap more than once. 
Thus, additional errors introduced by FHCM are usually smaller 
than those in FMSM.

\else
The Monotonicity Checks result in a considerable 
increase in performance since, for small enough $h^c$, most cell boundaries become monotone.
However, generalizing this procedure to higher dimensional cells 
is less straightforward.  
For this reason we decided against using Monotonicity Checks 
in our implementation of HCM.

The resulting Fast Heap-Cell Method (FHCM) 
is significantly faster than HCM, 
but at the cost of introducing additional errors.  Not surprisingly, these additional errors
are usually much smaller than those in FMSM (see section \ref{s:experiments}), since
in FHCM the cells are allowed to enter the heap more than once.

In \cite{ChacVlad} we also illustrate the changing cell values, sweeping directions, 
and positions within the heap-sort data structure under FHCM for a checkerboard example.
\fi

\begin{remark}
\label{rem:WeberDevirBronsBronsKimmelHCM}
To conclude the discussion of our heap-cell methods, we briefly describe
a recent algorithm with many similar features, but very different goals
and implementation details.
The ``Raster scan algorithm on a multi-chart geometry image''
was introduced in \cite{WeberDevirBronsBronsKimmel}
for geodesic distance computations on parametric surfaces.
Such surfaces are frequently represented by an atlas of overlapping charts,
where each chart has its own parametric representation and grid resolution
(depending on the detail level of the underlying surface).  The computational 
subdomains corresponding to charts are typically large and the ``raster
scan algorithm'' (similar to the traditional FSM with a fixed ordering of 
sweep directions) is used to parallelize the computations within each chart.
The heuristically defined chart values are employed to decide which chart 
will be raster-scanned next. 

Aside from the difference in heuristic formulas used to compute chart 
values, in \cite{WeberDevirBronsBronsKimmel} the emphasis is on providing 
the most efficient implementation of raster scans on each chart
(particularly for massively parallel architectures).
The use of several large, parametrization/resolution-defined charts, 
typically results in complicated chart interdependencies since most 
chart boundaries are generally both inflow and outflow.  
Moreover, if this method is applied to any
Eikonal problems beyond the geodesic distance computations,
the monotonicity of characteristic directions will generally not hold
and a high number of sweeps may be needed on each chart.
In contrast, our focus is on reducing the cell interdependencies and on
the most efficient cell ordering:
when $h^c$ is sufficiently small, most cell boundaries 
are either completely inflow or outflow, defining a causal relationship
among the cells.  Relatively small cell sizes also ensure that $F$ 
is approximately constant, the characteristics are approximately straight lines,
and only a small number of sweeps is needed on each cell. Finally, the
cell orderings are also useful to accelerate the convergence within each cell
by altering the sweep-ordering
based on the location of upwinding cells (as in FMSM and HCM) or based on
fine-grid cell-boundary data (as in FHCM).  The hybrid methods 
introduced here show that causality-respecting domain decompositions
can accelerate even serial algorithms on single processor machines.
%
%
%
\end{remark}

\section{Numerical Experiments}
\label{s:experiments}

All examples were computed on a unit square $[0,1] \times [0,1]$ domain
with zero boundary conditions $q=0$ on the exit set $Q$ 
(defined separately in each case).
In each example that follows we have fixed the grid size $h = h^f$, and only the cell size $h^c$ is varied.
Since analytic formulas for viscosity solutions are typically unavailable,
we have used the Fast Marching Method on a much finer grid (of size $h/4$)
to obtain the ``ground truth'' used to evaluate the errors in all the other methods.

Suppose $e_i$ is the absolute value of the error-due-to-discretization at gridpoint $\x_i$
(i.e., the error produced by FSM or FMM when directly executed on the 
fine grid),
 and suppose $E_i$ is the absolute value of 
 the error committed by one of the new hybrid methods at the same $\x_i$.
Define the set $X_+ = \{ \x_i \in X \mid e_i \neq 0 \}$ and let $M_+ = |X_+|$ 
be the number of elements in it.  (We verified that 
$\x_i \not \in X_+ \, \Rightarrow E_i = 0$ 
in all computational experiments.)
To analyze the ``additional errors'' introduced by FMSM and FHCM,  we report\\
$\bullet \,$ the \emph{Maximum Error Ratio} defined as 
$\mathcal{R}=\max_i (E_i/e_i)$, where the maximum is taken over $\x_i \in X_+$;\\
$\bullet \,$ the \emph{Average Error Ratio} defined as 
$\mathcal{\rho}=\frac{\sum (E_i/e_i)}{M_+}$, where the sum is taken over $\x_i \in X_+$;\\
$\bullet \,$ the \emph{Ratio of Maximum Errors} defined as $R = \frac{\max_i(E_i)}{\max_i(e_i)}$.\\  
$R$ is relevant since on parts of the domain where $e_i$'s are very small,
additional errors might result in large $\mathcal{R}$ even if $E_i$'s are quite
small compared to the $L_\infty$ norm of discretization errors.
In the ideal scenario, with no additional errors, $\mathcal{R} = \mathcal{\rho} = R = 1.$

\noindent
For the Heap-Cell algorithms we also report\\ 
$\bullet \,$ {\em AvHR}, the average number of heap removals per cell,\\
$\bullet \,$ {\em AvS}, the average number of sweeps per cell, and\\ 
$\bullet \,$ {\em Mon \%}, the percentage of times that the ``cell-boundary monotonicity'' check was successful.\\
\noindent
Finally, we report the number of sweeps needed in FSM and LSM for each problem. 
 
\begin{remark}
\label{rem:fair_comparison}
Performance analysis of competing numerical methods is an obviously
delicate undertaking since the implementation details as well as the choice 
of test problems might affect the outcome.  
We have made every effort to select representative examples highlighting
advantages and disadvantages of all approaches.  All tests were performed on 
an AMD Turion 2GHz dual-core processor with 3GB RAM.  
Only one core was used to perform all tests.
Our C++ implementations were carefully checked for the efficiency of data 
structures and algorithms, but we did not 
conduct any additional performance tuning or Assembly-level optimizations.  
Our code was compiled using the {\tt g++} compiler version 3.4.2 with compiler
options {\tt -O0 -finline}.
We have also preformed all tests with the full compiler optimization (i.e., with {\tt -O3});
the results were qualitatively similar, but we opted to report the performance data for the
unoptimized version to make the comparison as compiler-independent as possible.
For each method, all memory allocations (for grids and heap data structures)
were not timed; the reported CPU times include the time needed to 
initialize the relevant data structures and run the corresponding algorithm.
We also note that the speed function $F(\x)$ was computed by a separate
function call whenever needed,
rather than precomputed and stored for every gridpoint during initialization. 
All CPU-times are reported in seconds
for the Fast Marching (FMM), the standard Fast Sweeping (FSM), the Locking Sweeping (LSM),
and the three new hybrid methods (HCM, FHCM, and FMSM).  
\end{remark}

\subsection{Comb Mazes}
\label{ss:combmazes}
The following examples model optimal motion through a maze with slowly permeable barriers.
Speed function $F(x,y)$ is defined by a ``comb maze'': $F=1$ outside and $0.01$ inside the barriers; 
see Figure \ref{fig:comb_maze}.
The exit set consists of the origin: $Q = \{(0,0)\}$. 
The computational cost of sweeping methods is roughly proportional to the number of barriers,
while FMM is only minimally influenced by this.
The same good property is inherited by the hybrid methods introduced in this paper.
The first example with 4 barriers uses barrier walls aligned with cell boundaries
and all hybrid methods easily outperform the fastest of the previous methods (LSM); 
see Table \ref{tab:4_comb}.


\begin{figure}[h]
\center{
$
\begin{array}{cc}
\includegraphics[scale = .3] {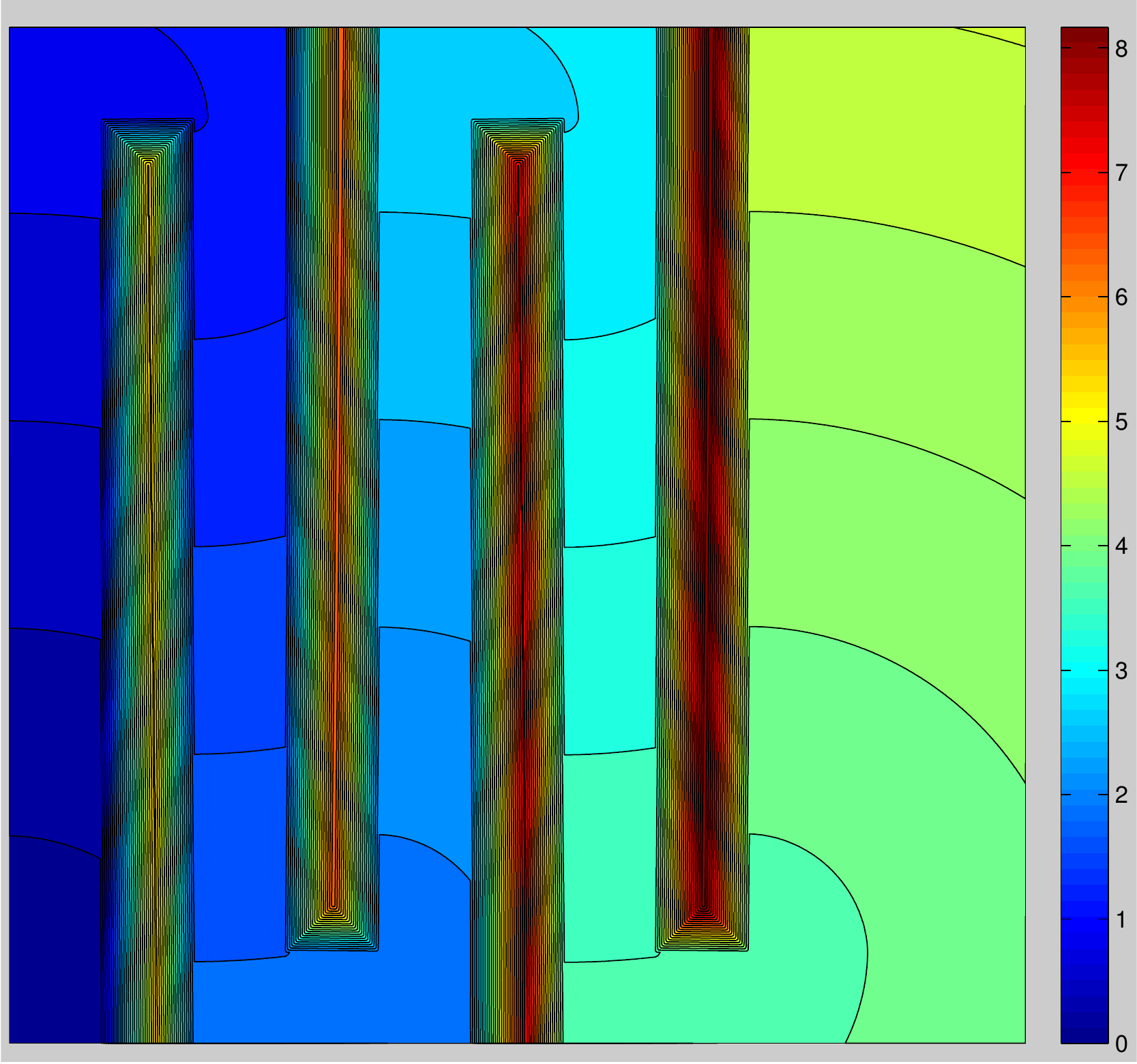}

&
\hspace*{10mm}
\includegraphics[scale = .5] {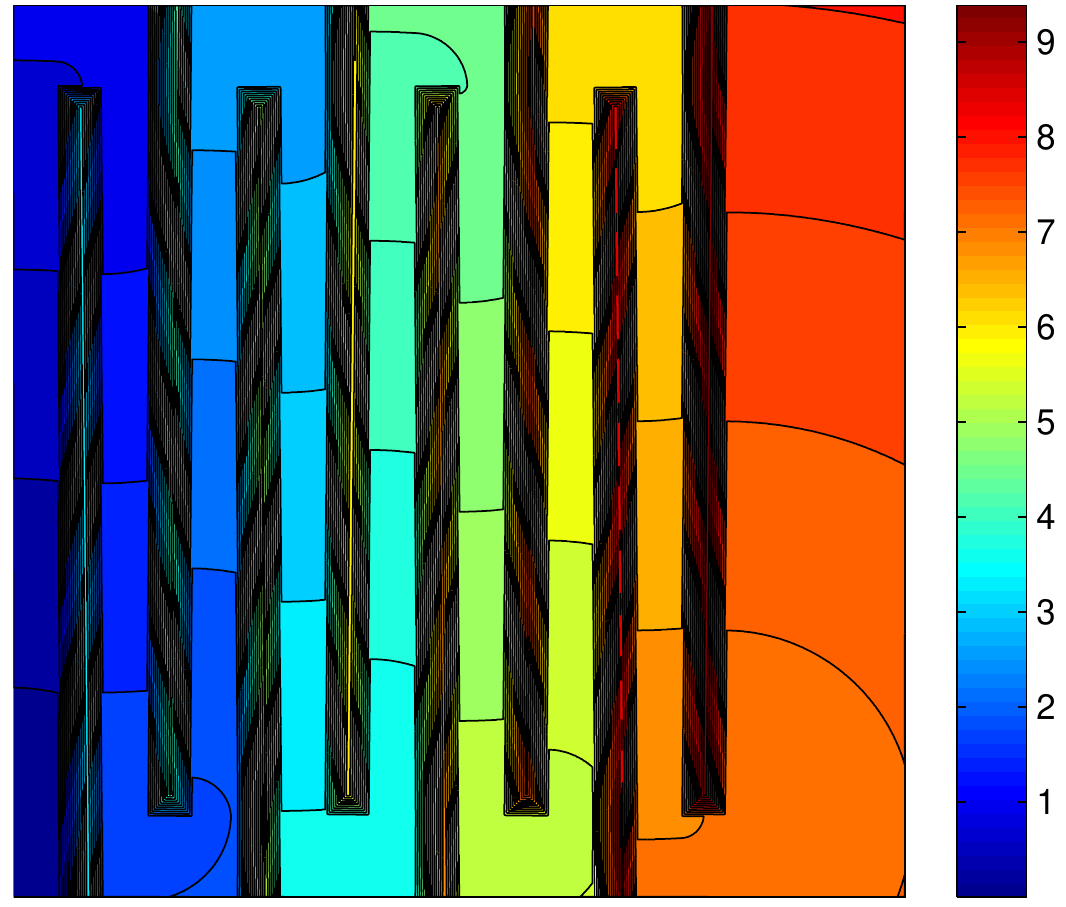}\\
A & B
\end{array}
$
}
\caption{
{\footnotesize
Min time to the point $(0,0)$ on comb maze domains:  4 barriers (A),
and 8 barriers (B).}}
\label{fig:comb_maze}
\end{figure}

\begin{table}[h]\footnotesize
\caption{Performance/convergence results for a 4 wall comb maze example.}
 \vspace*{2mm}
\begin{tabular}{|c|c|c|c|c|c|c|}
\hline
\textbf{Grid Size} & $\bm{L_\infty}$ \textbf{Error} & $\bm{L_1}$ \textbf{Error} & \textbf{FMM Time} & \textbf{FSM Time} & \textbf{LSM Time} & \textbf{\# Sweeps}\\
\hline

1408 $\times$ 1408  & 5.9449e-002  & 1.4210e-002  & 2.45   & 6.41  & 2.05  & 12 \\ 
\hline
\end{tabular}
  
\vspace*{.5cm}

\begin{tabular}{|l|c|c|c|c|c|c|c|}
\hline 
\textbf{METHOD} & \textbf{TIME} & $\bm{\mathcal{R}}$ & $\bm{\mathcal{\rho}}$ & \textbf{R} & \textbf{AvHR} & \textbf{AvS} & \textbf{Mon \%} \\
\hline
\vspace*{-.4cm}
&&&&&&&\\
\hline 	

HCM $22\times22$ cells &1.08&&&& 1.151& 3.971 & \\ \hline
HCM $44\times44$ cells &1.10&&&& 1.078& 3.724 & \\ \hline
HCM $88\times88$ cells &1.08&&&& 1.040& 3.593 & \\ \hline
HCM $176\times176$ cells &1.10&&&& 1.020& 3.518 & \\ \hline
HCM $352\times352$ cells &1.24&&&& 1.015& 3.496 & \\ \hline
HCM $704\times704$ cells &1.63&&&& 1.008& 3.468 & \\ \hline

\hline
\vspace*{-.4cm}
&&&&&&&\\
\hline

FHCM $22\times22$ cells &0.79& 1.0460& 1.0000& 1.0000 & 1.151& 1.618 & 85.5\\ \hline
FHCM $44\times44$ cells &0.74& 1.0191& 1.0000& 1.0000 & 1.078& 1.310 & 92.6\\ \hline
FHCM $88\times88$ cells &0.74& 1.0085& 1.0000& 1.0000 & 1.040& 1.156 & 96.2\\ \hline
FHCM $176\times176$ cells &0.78& 1.0073& 1.0000& 1.0000 & 1.020& 1.080 & 98.4\\ \hline
FHCM $352\times352$ cells &0.95& 1.0002& 1.0000& 1.0000 & 1.015& 1.049 & 99.3\\ \hline
FHCM $704\times704$ cells &1.41& 1.0000& 1.0000& 1.0000 & 1.008& 1.022 & 100.0\\ \hline

\hline
\vspace*{-.4cm}
&&&&&&&\\
\hline 	

FMSM $22\times22$ cells &0.58& 1.1659& 1.0000& 1.0000 & & 1.436 & \\ \hline
FMSM $44\times44$ cells &0.54& 1.0706& 1.0000& 1.0018 & & 1.218 & \\ \hline
FMSM $88\times88$ cells &0.53& 1.0821& 1.0000& 1.0018 & & 1.110 & \\ \hline
FMSM $176\times176$ cells &0.57& 1.0468& 1.0000& 1.0008 & & 1.055 & \\ \hline
FMSM $352\times352$ cells &0.71& 1.0378& 1.0000& 1.0004 & & 1.028 & \\ \hline
FMSM $704\times704$ cells &1.24& 1.0064& 1.0000& 1.0001 & & 1.014 & \\ \hline
\hline
\end{tabular}
\label{tab:4_comb}
\end{table}

We note that even the slowest of the HCM trials outperforms FMM, FSM, and LSM on this example.
Despite the special alignment of cell boundaries, 
this example is typical in the following ways:

1.  In both Heap-Cell algorithms, as the number of cells increases, the average number of heap removals per cell decreases.

2.  In FHCM the average number of sweeps per cell decreases to 1 as $h^c$ decreases.


3.  In FHCM the percentage of monotonicity check successes increases as $h^c$ decreases.

4.  For timing performance in both HCM and FHCM, 
the optimal choice of $h^c$ is somewhere in the middle of the tested range.

The reason for \#2 is that, as the number of cells $J$ increases, most cells will pass the Monotonicity Check.  When the 
monotonicity percentage is high and each cell has on average 2 ``upwinding'' neighboring cells,
each cell on the heap will have one sweeping direction tagged.  This observation combined with
\#1 explains \#2.

\ifnever
Observation \#1 also explains \#3.  Again, if each cell has on average 2 upwinding neighbor cells, then 
there are three preferred sweeping directions
tagged because no monotonicity check is used in HCM.  
If all sweeping directions needed for convergence are among
these three preferred, then the average number of sweeps should be 4, 
since one extra sweep is required to check that no gridpoints' values are changing.
\fi

Combining \#1 and \#2 and the fact that the length of the heap also increases with $J$
there is a complexity trade-off that explains \#4.  
As $J$ tends to $M$, the complexity of both Heap-Cell 
algorithms is similar to that of Fast Marching.  As $J$ tends to $1$, the complexity of 
HCM is similar to that of Locking Sweeping.

In the second example we use 8 barriers
and the boundaries of the cells are {\bf not aligned} with 
the discontinuities of the speed function.  This example was chosen specifically because it is difficult for our 
new hybrid methods when using the same cell-decompositions as in the previous example.
The performance data is summarized in Table \ref{tab:8_comb}.
\begin{table}[h]\footnotesize
\caption{Performance/convergence results for an 8 wall comb maze example.}
 \vspace*{2mm}
\begin{tabular}{|c|c|c|c|c|c|c|}
\hline
\textbf{Grid Size} & $\bm{L_\infty}$ \textbf{Error} & $\bm{L_1}$ \textbf{Error} & \textbf{FMM Time} & \textbf{FSM Time} & \textbf{LSM Time} & \textbf{\# Sweeps}\\
\hline
1408 $\times$ 1408  & 6.5644e-002  & 1.6865e-002  & 2.50   & 11.1  & 3.20  & 20 \\ 
\hline
\end{tabular}

\vspace*{.5cm}

\begin{tabular}{|l|c|c|c|c|c|c|c|}
\hline 
\textbf{METHOD} & \textbf{TIME} & $\bm{\mathcal{R}}$ & $\bm{\mathcal{\rho}}$ & \textbf{R} & \textbf{AvHR} & \textbf{AvS} & \textbf{Mon \%} \\
\hline
\vspace*{-.4cm}
&&&&&&&\\
\hline 	
HCM $22\times22$ cells &2.13&&&& 2.795& 9.293 & \\ \hline
HCM $44\times44$ cells &7.68&&&& 8.738& 28.046 & \\ \hline
HCM $88\times88$ cells &6.68&&&& 6.798& 22.804 & \\ \hline
HCM $176\times176$ cells &5.86&&&& 5.655& 18.872 & \\ \hline
HCM $352\times352$ cells &2.95&&&& 2.456& 8.314 & \\ \hline
HCM $704\times704$ cells &1.74&&&& 1.037& 3.587 & \\ \hline

\hline
\vspace*{-.4cm}
&&&&&&&\\
\hline 	

FHCM $22\times22$ cells &1.75& 1.4247& 1.0000& 1.0000 & 2.946& 4.087 & 84.7\\ \hline
FHCM $44\times44$ cells &5.86& 1.4250& 1.0000& 1.0000 & 8.991& 10.209 & 94.0\\ \hline
FHCM $88\times88$ cells &4.54& 1.3083& 1.0000& 1.0000 & 6.976& 7.329 & 98.1\\ \hline
FHCM $176\times176$ cells &3.96& 1.2633& 1.0000& 1.0000 & 5.754& 5.910 & 99.1\\ \hline
FHCM $352\times352$ cells &2.13& 1.8922& 1.0000& 1.0000 & 2.468& 2.549 & 99.1\\ \hline
FHCM $704\times704$ cells &1.48& 1.5700& 1.0000& 1.0000 & 1.037& 1.066 & 100.0\\ \hline

\hline
\vspace*{-.4cm}
&&&&&&&\\
\hline
FMSM $22\times22$ cells &0.68& 604.49& 6.6555& 21.036 & & 1.783 & \\ \hline
FMSM $44\times44$ cells &0.59& 228.29& 3.1529& 19.442 & & 1.385 & \\ \hline
FMSM $88\times88$ cells &0.56& 313.01& 2.7666& 6.4608 & & 1.195 & \\ \hline
FMSM $176\times176$ cells &0.58& 381.98& 1.7374& 5.5944 & & 1.097 & \\ \hline
FMSM $352\times352$ cells &0.74& 45.397& 1.1718& 2.0506 & & 1.049 & \\ \hline
FMSM $704\times704$ cells &1.26& 23.303& 1.1738& 1.3536 & & 1.024 & \\ \hline

\hline
\end{tabular}
\label{tab:8_comb}
\end{table}

Notice that since the edges of cells do not coincide with the edges of barriers, the 
performance of the hybrid methods is not as good as in the previous 4-barrier case, where the edges do coincide.
In this example the cells that contain
a discontinuity of the speed function may not receive an accurate cell value (for either the Heap-Cell
algorithms or FMSM) and may often have poor
choices of planned sweeping directions (for FHCM \& FMSM).   For FHCM, since the error is small in most trials, 
this effect appears to be rectified at the expense of the same cells being added to the heap many times.
For FMSM, since each cell is processed 
only once, large error remains.  
The non-monotonic behavior of $\mathcal{R}$ in FMSM and FHCM 
appears to be due to changes in positions of cell centers
relative to barrier edges as $h^c$ decreases.

These comb maze examples illustrate the importance of choosing 
cell placement and cell sizes so that the speed is roughly constant in each cell.
This is necessary both for a small number of sweeps to be effective and for choosing cell values accurately.

\subsection{Checkerboards}
\label{ss:checkers}

\iffullversion
We return to the checkerboard example already described in section \ref{ss:FMSM}.  
For both the $11 \times 11$ and $41 \times 41$ checkerboard speed functions
the center checker is slow.  The speed is 1 in the slow checkers and 2 in the fast checkers.  
The exit set is the single point $Q= \{ (0.5, \, 0.5) \}$.
\else
We consider a checkerboard domain with the speed function $F=1$ in white (slow) checkers
and $F=2$ in black (fast) checkers.
The exit set consists of a single point in the center $Q= \{ (0.5, \, 0.5) \}$.
Figure \ref{fig:checkers_computed} shows the level curves of solutions on both 
$11 \times 11$ and $41 \times 41$ checkerboards.
\fi

\begin{figure}[h]
\center{
$
\begin{array}{cc}
\includegraphics[scale = .3] {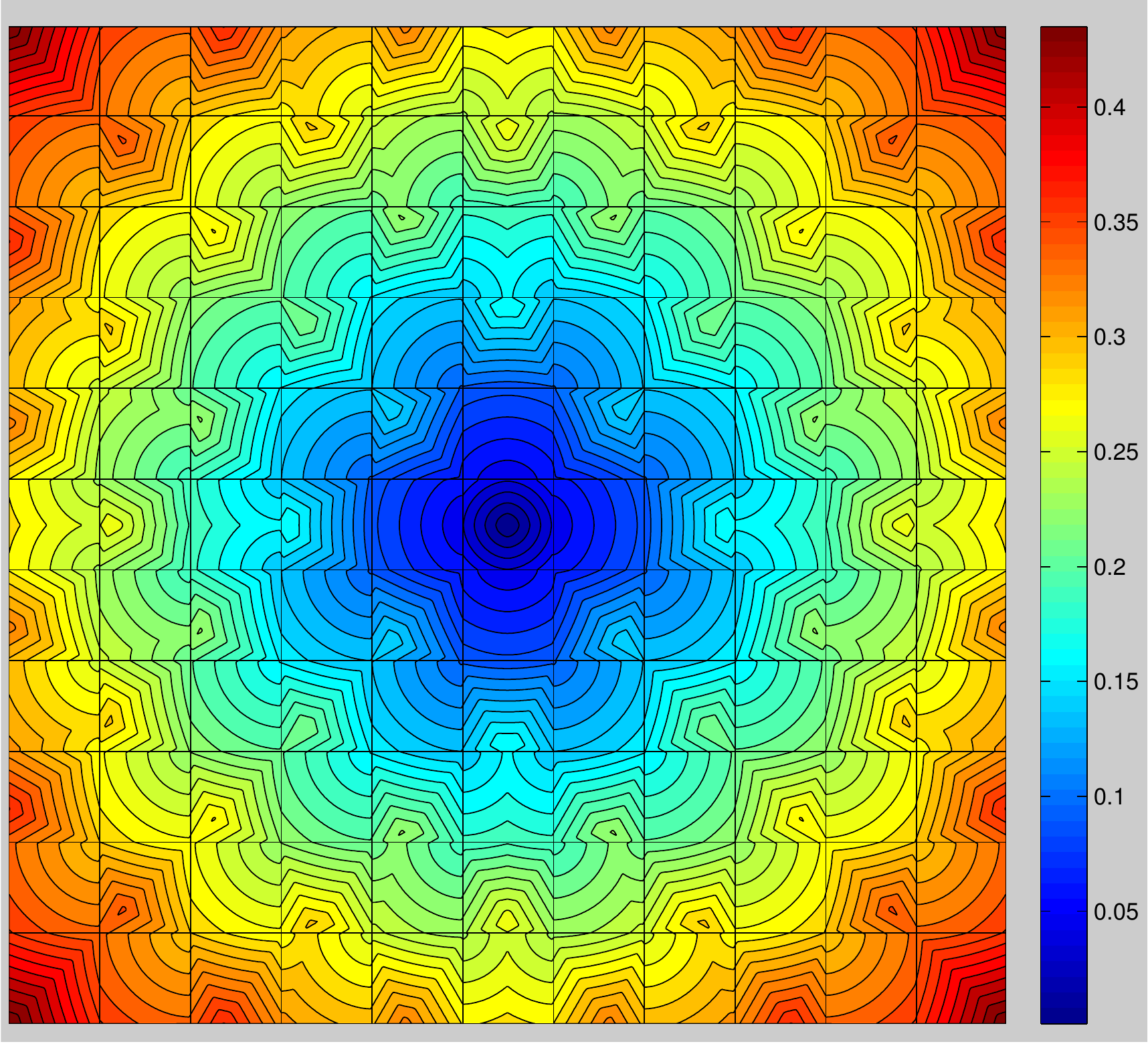}

&
\hspace*{5mm}
\includegraphics[scale = .3] {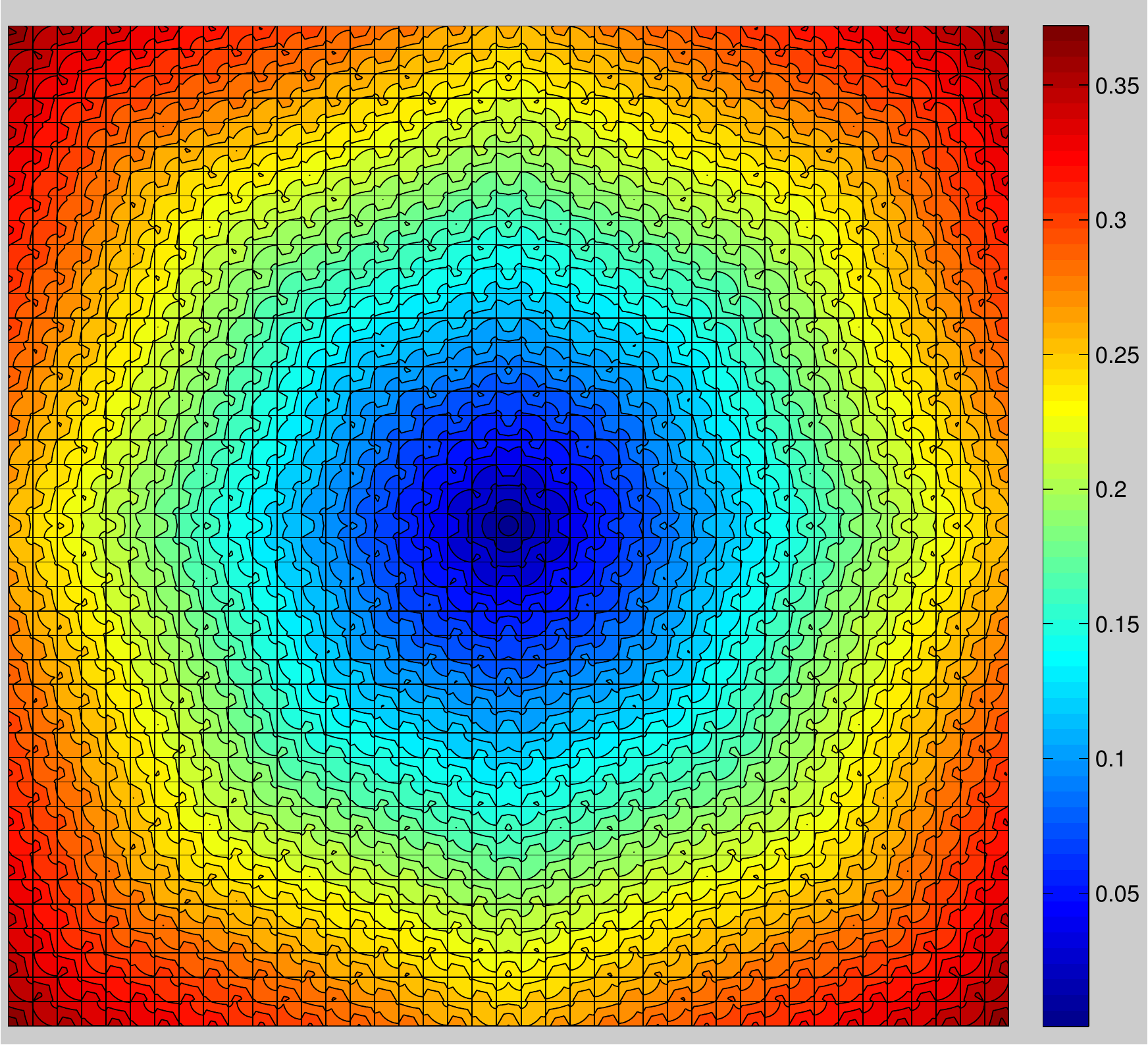}\\
A & B
\end{array}
$
}
\caption{
{\footnotesize
Min time to the center on checkerboard domains:  $11 \times 11$ checkers (A),
and $41 \times 41$ checkers (B).}}
\label{fig:checkers_computed}
\end{figure}

\begin{remark}
\label{rem:checkers_FSM}
Such checkerboard examples arise naturally in the context of front propagation through 
composite medium, consisting of a periodic mix of isotropic constituent materials
with different speed function $F$.  The idea of {\em homogenization} is to
derive a homogeneous but anisotropic speed function $\widebar{F}(\bn)$, describing 
the large-scale properties of the composite material.  After $\widebar{F}(\bn)$ is 
computed, the boundary value problems can be solved on a coarser grid.
A new efficient method for this homogenization was introduced in \cite{ObTaVlad}, 
using FMM on the fine scale grid since the characteristics are highly oscillatory
and the original implementation of sweeping was inefficient.
The same test problems were later attacked in \cite{LuoYuZhao} using 
a version of FSM with gridpoint locking (see Remark \ref{rem:locking_FSM}).  The results in 
Table \ref{tab:checker_41} shows that even the Locking-Sweeping Method becomes
significantly less efficient than FMM with the increase in the number of checkers.
\end{remark}

\vspace*{-1mm}
\begin{table}[H]\footnotesize
\caption{Performance/convergence results for $11 \times 11$ checkerboard example.}
 \vspace*{2mm}
\begin{tabular}{|c|c|c|c|c|c|c|}
\hline
\textbf{Grid Size} & $\bm{L_\infty}$ \textbf{Error} & $\bm{L_1}$ \textbf{Error} & \textbf{FMM Time} & \textbf{FSM Time} & \textbf{LSM Time} & \textbf{\# Sweeps}\\
\hline
1408 $\times$ 1408  & 3.2639e-003  & 1.7738e-003  & 3.44   & 12.3  & 2.28  & 16 \\ 
\hline
\end{tabular}

\vspace*{.5cm}

\begin{tabular}{|l|c|c|c|c|c|c|c|}
\hline 
\textbf{METHOD} & \textbf{TIME} & $\bm{\mathcal{R}}$ & $\bm{\mathcal{\rho}}$ & \textbf{R} & \textbf{AvHR} & \textbf{AvS} & \textbf{Mon \%} \\
\hline
\vspace*{-.4cm}
&&&&&&&\\
\hline 	
HCM $22\times22$ cells &1.84&&&& 1.397& 5.254 & \\ \hline
HCM $44\times44$ cells &1.73&&&& 1.209& 4.613 & \\ \hline
HCM $88\times88$ cells &1.69&&&& 1.083& 4.117 & \\ \hline
HCM $176\times176$ cells &1.72&&&& 1.029& 3.864 & \\ \hline
HCM $352\times352$ cells &1.87&&&& 1.009& 3.768 & \\ \hline
HCM $704\times704$ cells &2.51&&&& 1.003& 3.746 & \\ \hline

\hline
\vspace*{-.4cm}
&&&&&&&\\
\hline 	

FHCM $22\times22$ cells &1.17& 1.0122& 1.0000& 1.0000 & 1.399& 1.779 & 86.3\\ \hline
FHCM $44\times44$ cells &1.11& 1.0208& 1.0000& 1.0000 & 1.227& 1.535 & 90.6\\ \hline
FHCM $88\times88$ cells &1.08& 1.0111& 1.0000& 1.0000 & 1.091& 1.247 & 95.1\\ \hline
FHCM $176\times176$ cells &1.14& 1.0050& 1.0000& 1.0000 & 1.029& 1.103 & 97.8\\ \hline
FHCM $352\times352$ cells &1.33& 1.0006& 1.0000& 1.0000 & 1.009& 1.043 & 99.4\\ \hline
FHCM $704\times704$ cells &2.08& 1.0000& 1.0000& 1.0000 & 1.003& 1.020 & 100.0\\ \hline

\hline
\vspace*{-.4cm}
&&&&&&&\\
\hline
FMSM $22\times22$ cells &0.87& 40.312& 1.5725& 13.016 & & 1.269 & \\ \hline
FMSM $44\times44$ cells &0.91& 18.167& 1.0875& 7.4581 & & 1.334 & \\ \hline
FMSM $88\times88$ cells &0.89& 7.6692& 1.0113& 3.1400 & & 1.222 & \\ \hline
FMSM $176\times176$ cells &0.91& 5.4947& 1.0025& 2.4813 & & 1.127 & \\ \hline
FMSM $352\times352$ cells &1.07& 2.4557& 1.0004& 1.3888 & & 1.067 & \\ \hline
FMSM $704\times704$ cells &1.84& 1.5267& 1.0000& 1.0032 & & 1.035 & \\ \hline

\hline
\end{tabular}
\label{tab:checker_11}
\end{table}

\vspace*{-4mm}
\begin{table}[h]\footnotesize
\caption{Performance/convergence results for $41 \times 41$ checkerboard example.}
 \vspace*{2mm}
\begin{tabular}{|c|c|c|c|c|c|c|}
\hline
\textbf{Grid Size} & $\bm{L_\infty}$ \textbf{Error} & $\bm{L_1}$ \textbf{Error} & \textbf{FMM Time} & \textbf{FSM Time} & \textbf{LSM Time} & \textbf{\# Sweeps}\\
\hline
1312 $\times$ 1312  & 1.2452e-002  & 6.6827e-003  & 4.13   & 58.9  & 11.7  & 45 \\ 
\hline
\end{tabular}

\vspace*{.5cm}

\begin{tabular}{|l|c|c|c|c|c|c|c|}
\hline 
\textbf{METHOD} & \textbf{TIME} & $\bm{\mathcal{R}}$ & $\bm{\mathcal{\rho}}$ & \textbf{R} & \textbf{AvHR} & \textbf{AvS} & \textbf{Mon \%} \\

\hline
\vspace*{-.4cm}
&&&&&&&\\
\hline
HCM $41\times41$ cells &4.18&&&& 3.261& 11.926 & \\ \hline
HCM $82\times82$ cells &3.05&&&& 1.571& 5.939 & \\ \hline
HCM $164\times164$ cells &2.84&&&& 1.314& 4.831 & \\ \hline
HCM $328\times328$ cells &2.81&&&& 1.080& 3.972 & \\ \hline
HCM $656\times656$ cells &3.36&&&& 1.026& 3.768 & \\ \hline

\hline
\vspace*{-.4cm}
&&&&&&&\\
\hline 

FHCM $41\times41$ cells &2.83& 1.7506& 1.0041& 1.7123 & 3.261& 4.600 & 75.5\\ \hline
FHCM $82\times82$ cells &2.09& 1.0299& 1.0006& 1.0128 & 1.584& 2.147 & 78.8\\ \hline
FHCM $164\times164$ cells &1.95& 1.0103& 1.0001& 1.0000 & 1.321& 1.670 & 90.4\\ \hline
FHCM $328\times328$ cells &2.01& 1.0173& 1.0000& 1.0000 & 1.080& 1.236 & 96.9\\ \hline
FHCM $656\times656$ cells &2.79& 1.0075& 1.0000& 1.0000 & 1.026& 1.106 & 100.0\\ \hline
\hline
\vspace*{-.4cm}
&&&&&&&\\
\hline
FMSM $41\times41$ cells &1.46& 12.398& 3.4110& 3.3991 & & 1.164 & \\ \hline
FMSM $82\times82$ cells &1.54& 10.551& 1.0975& 1.7662 & & 1.211 & \\ \hline
FMSM $164\times164$ cells &1.70& 4.7036& 1.0142& 1.7123 & & 1.281 & \\ \hline
FMSM $328\times328$ cells &1.88& 2.0192& 1.0020& 1.7123 & & 1.242 & \\ \hline
FMSM $656\times656$ cells &2.65& 1.7506& 1.0004& 1.7123 & & 1.147 & \\ \hline
\hline
\end{tabular}
\label{tab:checker_41}
\end{table}

In both examples the cell sizes were chosen to align with the edges of the checkers 
(i.e., the discontinuities of the speed function).
On the $11 \times 11$ checkerboard, 
almost all of the HCM trials outperforms FMM and LSM, and most of 
the FHCM trials are more than twice as fast as LSM and three times faster than FMM
while the additional errors are negligible; see Table \ref{tab:checker_11}.

The $41 \times 41$ example is much more difficult for the sweeping algorithms 
because the number of times the characteristics changes direction
increases with the number of checkers.  
We note that the performance of FMM is only moderately worse here
(mostly due to a larger length of level curves and the resulting growth of the ``Considered List'').  
Again, almost all hybrid methods outperform all other methods.  The difference
is less striking than in the $11 \times 11$ example when compared with FMM, but FHCM and FMSM
are $4$ to $6$ times faster than LSM; see Table \ref{tab:checker_41}.

\iffullversion

\vfill

\pagebreak
\fi

\subsection{Continuous speed functions with a point source}
\label{ss:continuous_F_examples}

Suppose the speed function is $F \equiv 1$ and the exit set consists of a single point 
$Q = \{(0.5, \, 0.5)\}.$ 
In this case the viscosity solution is simply the distance to the center of the unit square.
We also note that the causal ordering of cells is clearly available here;
as a result, FHCM and FMSM do not introduce any additional errors. 
The performance data is summarized in Table \ref{tab:const}.
For constant speed functions LSM performs significantly better than FMM on  
fine meshes (such as this one).  The reason
why FMSM and FHCM are faster than LSM in some trials is that LSM checks all parts of the domain in each sweep,
including non-downwinding or already-computed parts.
Additionally LSM must perform a final sweep to check that all gridpoints 
are locked.  All of the hybrid algorithms slow down monotonically as $J$ 
 increases because of the cost of sorting the heap.

\begin{table}[h]\footnotesize
\caption{Performance/convergence results for constant speed function.}
 \vspace*{2mm}
\begin{tabular}{|c|c|c|c|c|c|c|}
\hline
\textbf{Grid Size} & $\bm{L_\infty}$ \textbf{Error} & $\bm{L_1}$ \textbf{Error} & \textbf{FMM Time} & \textbf{FSM Time} & \textbf{LSM Time} & \textbf{\# Sweeps}\\
\hline
1408 $\times$ 1408  & 1.0956e-003  & 6.8382e-004  & 2.72   & 2.07  & 0.83  & 5 \\
\hline
\end{tabular}

\vspace*{.5cm}

\begin{tabular}{|l|c|c|c|c|c|c|c|}
\hline
\textbf{METHOD} & \textbf{TIME} & $\bm{\mathcal{R}}$ & $\bm{\mathcal{\rho}}$ & \textbf{R} & \textbf{AvHR} & \textbf{AvS} & \textbf{Mon \%} \\
\hline
\vspace*{-.4cm}
&&&&&&&\\
\hline

HCM $22\times22$ cells &1.05&&&& 1.000& 3.692 & \\ \hline
HCM $44\times44$ cells &1.12&&&& 1.000& 3.718 & \\ \hline
HCM $88\times88$ cells &1.10&&&& 1.000& 3.733 & \\ \hline
HCM $176\times176$ cells &1.14&&&& 1.000& 3.742 & \\ \hline
HCM $352\times352$ cells &1.29&&&& 1.000& 3.746 & \\ \hline
HCM $704\times704$ cells &1.76&&&& 1.000& 3.748 & \\ \hline

\hline
\vspace*{-.4cm}
&&&&&&&\\
\hline 	

FHCM $22\times22$ cells &0.66& 1.0000& 1.0000& 1.0000 & 1.000& 1.025 & 100.0\\ \hline
FHCM $44\times44$ cells &0.67& 1.0000& 1.0000& 1.0000 & 1.000& 1.006 & 100.0\\ \hline
FHCM $88\times88$ cells &0.69& 1.0000& 1.0000& 1.0000 & 1.000& 1.002 & 100.0\\ \hline
FHCM $176\times176$ cells &0.75& 1.0000& 1.0000& 1.0000 & 1.000& 1.000 & 100.0\\ \hline
FHCM $352\times352$ cells &0.92& 1.0000& 1.0000& 1.0000 & 1.000& 1.000 & 100.0\\ \hline
FHCM $704\times704$ cells &1.47& 1.0000& 1.0000& 1.0000 & 1.000& 1.000 & 100.0\\ \hline

\hline
\vspace*{-.4cm}
&&&&&&&\\
\hline
FMSM $22\times22$ cells &0.47& 1.0000& 1.0000& 1.0000 & & 1.103 & \\ \hline
FMSM $44\times44$ cells &0.47& 1.0000& 1.0000& 1.0000 & & 1.049 & \\ \hline
FMSM $88\times88$ cells &0.49& 1.0000& 1.0000& 1.0000 & & 1.024 & \\ \hline
FMSM $176\times176$ cells &0.53& 1.0000& 1.0000& 1.0000 & & 1.012 & \\ \hline
FMSM $352\times352$ cells &0.67& 1.0000& 1.0000& 1.0000 & & 1.006 & \\ \hline
FMSM $704\times704$ cells &1.23& 1.0000& 1.0000& 1.0000 & & 1.003 & \\ \hline

\hline
\end{tabular}
\label{tab:const}
\end{table}

\iffullversion
\fi
Next we consider examples of 
min-time to the center under two different
oscillatory continuous speed functions.
For $F(x,y) = 1 + \frac{1}{2} \sin(20 \pi x) \sin(20 \pi y)$ 
the level sets of the value function are shown in Figure \ref{fig:sinusoidal}A
and the performance data is summarized in Table \ref{tab:sinu_1}.
For $F(x,y) = 1 + 0.99 \sin(2 \pi x) \sin(2 \pi y)$
the level sets of the value function are shown in Figure \ref{fig:sinusoidal}B
and the performance data is summarized in Table \ref{tab:sinu_2}.

\iffullversion
\fi
\begin{figure}[h]
\center{
$
\begin{array}{cc}
\includegraphics[scale = .3] {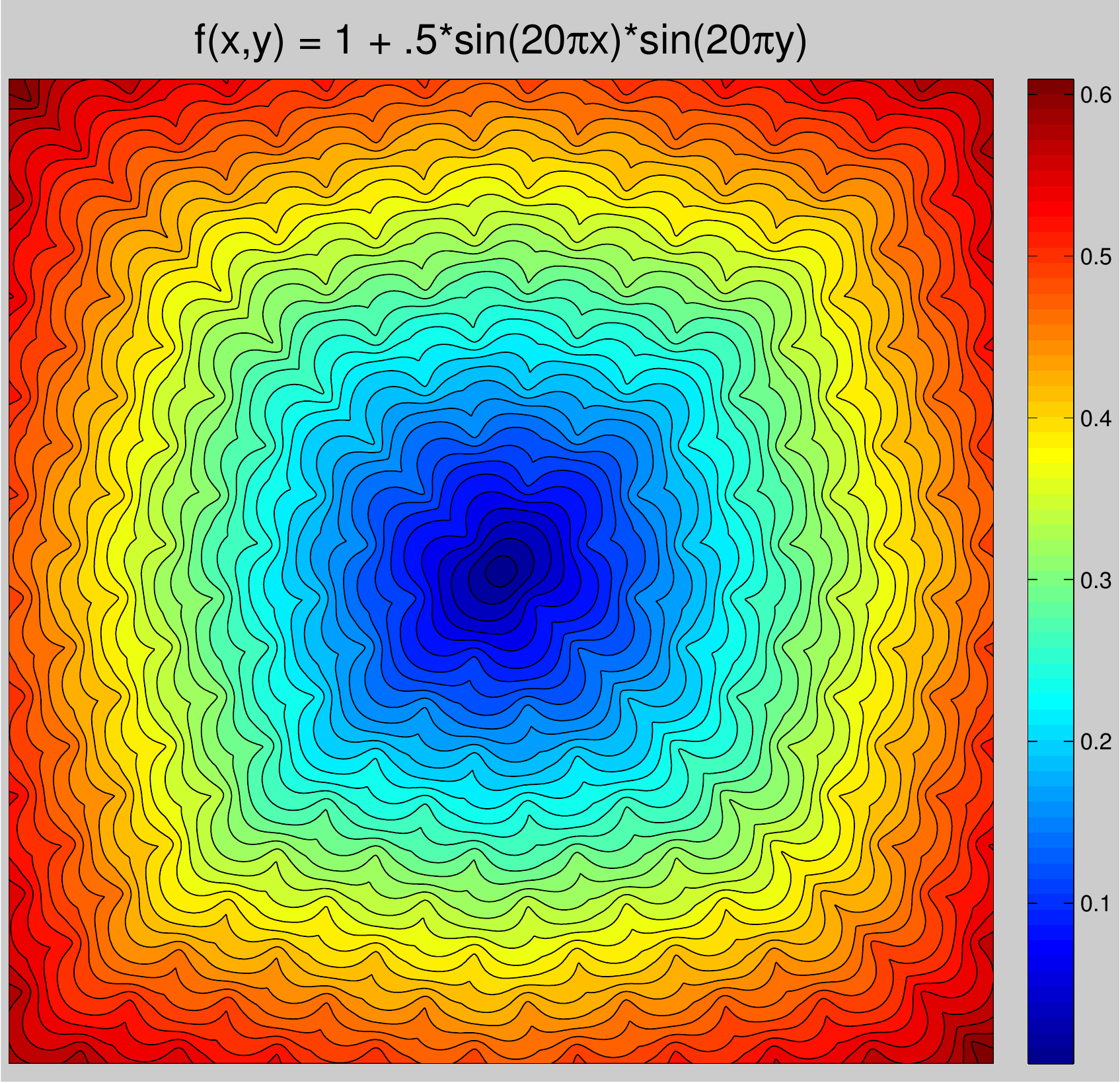}

&
\hspace*{5mm}
\includegraphics[scale = .3] {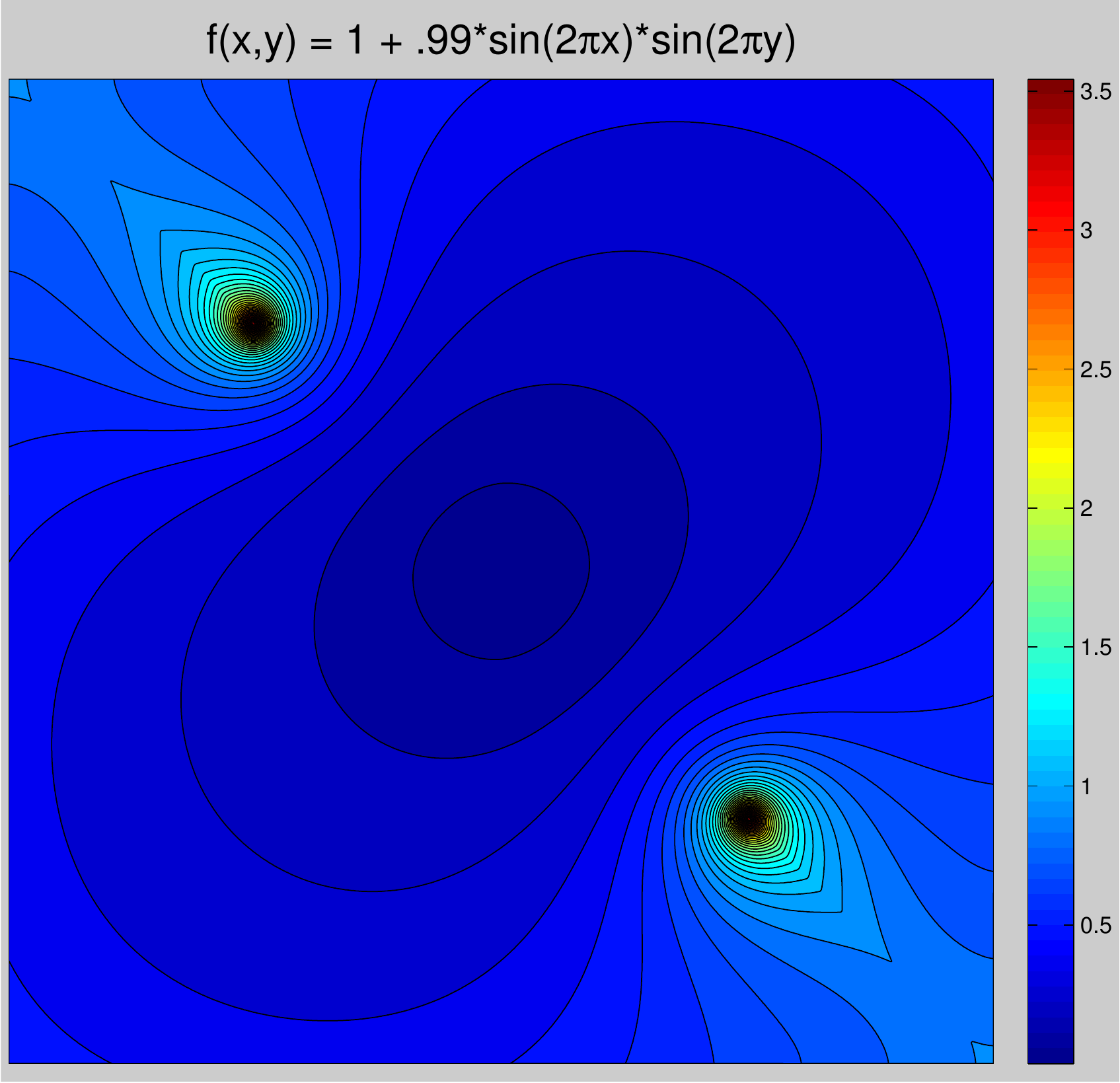}\\
A & B
\end{array}
$
}
\caption{
{\footnotesize
Min time to the center under sinusoidal speed functions.}}
\label{fig:sinusoidal}
\end{figure}

\iffullversion
\fi
\begin{table}[h]\footnotesize
\caption{Performance/convergence results for $F(x,y) = 1 + \frac{1}{2} \sin(20 \pi x) \sin(20 \pi y) $.}
 \vspace*{2mm}
\begin{tabular}{|c|c|c|c|c|c|c|}
\hline
\textbf{Grid Size} & $\bm{L_\infty}$ \textbf{Error} & $\bm{L_1}$ \textbf{Error} & \textbf{FMM Time} & \textbf{FSM Time} & \textbf{LSM Time} & \textbf{\# Sweeps}\\
\hline
1408 $\times$ 1408  & 4.7569e-003  & 1.9724e-003  & 3.74   & 23.7  & 6.39  & 24 \\ 
\hline
\end{tabular}

\vspace*{.5cm}

\begin{tabular}{|l|c|c|c|c|c|c|c|}
\hline 
\textbf{METHOD} & \textbf{TIME} & $\bm{\mathcal{R}}$ & $\bm{\mathcal{\rho}}$ & \textbf{R} & \textbf{AvHR} & \textbf{AvS} & \textbf{Mon \%} \\
\hline
\vspace*{-.4cm}
&&&&&&&\\
\hline 	

HCM $22\times22$ cells &3.61&&&& 1.913& 10.785 & \\ \hline
HCM $44\times44$ cells &2.97&&&& 1.446& 6.811 & \\ \hline
HCM $88\times88$ cells &2.60&&&& 1.245& 5.201 & \\ \hline
HCM $176\times176$ cells &2.40&&&& 1.117& 4.350 & \\ \hline
HCM $352\times352$ cells &2.40&&&& 1.047& 3.945 & \\ \hline
HCM $704\times704$ cells &2.92&&&& 1.016& 3.788 & \\ \hline

\hline
\vspace*{-.4cm}
&&&&&&&\\
\hline

FHCM $22\times22$ cells &2.72& 5.6062& 1.1358& 2.0960 & 4.413& 5.310 & 67.3\\ \hline
FHCM $44\times44$ cells &1.82& 3.1094& 1.1480& 1.0000 & 1.555& 2.132 & 78.7\\ \hline
FHCM $88\times88$ cells &1.61& 1.4025& 1.0122& 1.0000 & 1.277& 1.575 & 88.2\\ \hline
FHCM $176\times176$ cells &1.53& 1.0560& 1.0022& 1.0000 & 1.125& 1.262 & 94.5\\ \hline
FHCM $352\times352$ cells &1.65& 1.0226& 1.0004& 1.0000 & 1.048& 1.106 & 98.1\\ \hline
FHCM $704\times704$ cells &2.40& 1.0037& 1.0001& 1.0000 & 1.016& 1.035 & 100.0\\ \hline

\hline
\vspace*{-.4cm}
&&&&&&&\\
\hline

FMSM $22\times22$ cells &1.14& 10.497& 2.4811& 2.9653 & & 1.262 & \\ \hline
FMSM $44\times44$ cells &1.10& 6.0892& 1.3657& 2.2889 & & 1.200 & \\ \hline
FMSM $88\times88$ cells &1.16& 4.6801& 1.0515& 1.9504 & & 1.213 & \\ \hline
FMSM $176\times176$ cells &1.18& 3.4828& 1.0074& 1.3705 & & 1.126 & \\ \hline
FMSM $352\times352$ cells &1.34& 1.5987& 1.0007& 1.0000 & & 1.067 & \\ \hline
FMSM $704\times704$ cells &2.14& 1.1262& 1.0001& 1.0000 & & 1.035 & \\ \hline
\hline
\end{tabular}
\label{tab:sinu_1}
\end{table}

\begin{table}[h]\footnotesize
\caption{Performance/convergence results for $F(x,y) = 1 + 0.99 \sin(2\pi x) \sin(2\pi y) $ .}
 \vspace*{2mm}
\begin{tabular}{|c|c|c|c|c|c|c|}
\hline
\textbf{Grid Size} & $\bm{L_\infty}$ \textbf{Error} & $\bm{L_1}$ \textbf{Error} & \textbf{FMM Time} & \textbf{FSM Time} & \textbf{LSM Time} & \textbf{\# Sweeps}\\
\hline
1408 $\times$ 1408  & 2.1793e-002  & 9.8506e-004  & 3.69   & 12.7  & 2.73  & 13 \\ 
\hline
\end{tabular}

\vspace*{.5cm}

\begin{tabular}{|l|c|c|c|c|c|c|c|}
\hline 
\textbf{METHOD} & \textbf{TIME} & $\bm{\mathcal{R}}$ & $\bm{\mathcal{\rho}}$ & \textbf{R} & \textbf{AvHR} & \textbf{AvS} & \textbf{Mon \%} \\
\hline
\vspace*{-.4cm}
&&&&&&&\\
\hline
HCM $22\times22$ cells &2.29&&&& 1.165& 4.651 & \\ \hline
HCM $44\times44$ cells &2.15&&&& 1.070& 4.132 & \\ \hline
HCM $88\times88$ cells &2.11&&&& 1.034& 3.920 & \\ \hline
HCM $176\times176$ cells &2.13&&&& 1.015& 3.811 & \\ \hline
HCM $352\times352$ cells &2.26&&&& 1.008& 3.763 & \\ \hline
HCM $704\times704$ cells &2.80&&&& 1.002& 3.741 & \\ \hline

\hline
\vspace*{-.4cm}
&&&&&&&\\
\hline 
FHCM $22\times22$ cells &1.37& 60.848& 1.0020& 1.0014 & 1.174& 1.409 & 92.7\\ \hline
FHCM $44\times44$ cells &1.28& 4.5786& 1.0002& 1.0001 & 1.078& 1.185 & 96.1\\ \hline
FHCM $88\times88$ cells &1.28& 1.0224& 1.0000& 1.0000 & 1.039& 1.086 & 98.2\\ \hline
FHCM $176\times176$ cells &1.35& 1.0019& 1.0000& 1.0000 & 1.017& 1.039 & 99.3\\ \hline
FHCM $352\times352$ cells &1.55& 1.0003& 1.0000& 1.0000 & 1.008& 1.018 & 99.7\\ \hline
FHCM $704\times704$ cells &2.27& 1.0001& 1.0000& 1.0000 & 1.002& 1.006 & 100.0\\ \hline

\hline
\vspace*{-.4cm}
&&&&&&&\\
\hline 
FMSM $22\times22$ cells &1.13& 1362.4& 1.0270& 1.0053 & & 1.231 & \\ \hline
FMSM $44\times44$ cells &1.06& 174.62& 1.0054& 1.0053 & & 1.116 & \\ \hline
FMSM $88\times88$ cells &1.05& 38.545& 1.0021& 1.0046 & & 1.057 & \\ \hline
FMSM $176\times176$ cells &1.09& 7.1581& 1.0006& 1.0046 & & 1.029 & \\ \hline
FMSM $352\times352$ cells &1.28& 1.1687& 1.0001& 1.0028 & & 1.014 & \\ \hline
FMSM $704\times704$ cells &2.08& 1.0724& 1.0000& 1.0000 & & 1.007 & \\ \hline
\hline
\end{tabular}
\label{tab:sinu_2}
\end{table}

Note that HCM outperforms Fast Marching on all trials, and outperforms the sweeping methods 
significantly on the first example (Table \ref{tab:sinu_1}) despite the fact that no special selection of cell boundaries 
was made.  
Small changes in the frequency of the speed function did not significantly alter the performance of the hybrid algorithms.  
In the second example (Table \ref{tab:sinu_2}) most HCM trials were again faster than LSM and
FMM.  Note that for some cell sizes, both FMSM and FHCM have $R \ll \mathcal{R} = \max_j (E_j/e_j)$.
Whenever $R$ is close to 1, the rate of convergence of hybrid methods (based on $L_{\infty}$ errors)
is the same as that of FMM and FSM.  
%

\iffullversion

\subsection{Performance on coarser grids}
\label{ss:coarser}

Our hybrid methods exploit the fact that there exists $h^c$ small enough so that
most cell-boundaries will be either fully inflow or fully outflow and most pairs of cells
will not be mutually dependent. But if the original grid $X$ is sufficiently coarse,
this may not be possible to achieve since we also need $h^c \geq 2 h$ 
(otherwise FMM is clearly more efficient).
In this subsection we return to some of the previous examples but on significantly coarser grids, 
to test whether the hybrid methods remain competitive with FMM and LSM. 
The performance data is summarized in Tables \ref{tab:checker_11_c1}-\ref{tab:sinu_2_c2};
to improve the accuracy of timing on coarser grids, all CPU times are reported for 
20 executions of each algorithm.

Since $M$ is much smaller 
here, the $\log M$ term
in the complexity of Fast Marching plays less of a role.  
On most of the examples in this subsection, HCM and FHCM are not much faster than Fast Marching or Locking Sweeping.
For example, in Table \ref{tab:checker_41_c1} even though the cell boundaries are perfectly aligned with 
the checker boundaries, 
both Heap-Cell methods are merely on par with Fast Marching. 
Note that when $h$ is sufficiently small, their advantage over FMM and LSM
is clear (see Table \ref{tab:checker_41}).
  FMSM, however, is about twice as fast as the faster
of FMM and LSM.  In addition, FMSM's error ratios 
($R$, $\mathcal{R}$, and $\mathcal{\rho}$) 
are smaller here than for the same examples on finer grids 
in subsections \ref{ss:checkers}-\ref{ss:continuous_F_examples}.

\vspace*{10mm}
\begin{remark}
\label{rm:coarse_fmm_and_fewer_sweeps}
Since two of the hybrid methods introduce additional errors, 
an important question is, ``Given the total errors resulting from FHCM 
and FMSM at a given resolution ($h$, $h^c$), for which $\bar{h} > h$ would
FMM commit similar errors, and how well would FMM perform on that new coarser grid?''
For simplicity, assume in the following discussion that the CPU time required 
by FMM is roughly linear in $M = O(h^{-2})$ and that the resulting $L_\infty$ error is $O(h)$.
These are reasonable assumptions for coarse grids; e.g., 
see Tables \ref{tab:checker_11_c1}-\ref{tab:sinu_2_c2}.
For example, if we want to decrease time by a factor of $p^2$,  then $M \to M/p^2$,  
$ h \to p*h$ (in 2-d), and errors would increase by a factor of $p$.
Such estimates allow for a more accurate performance comparison between FMM and FMSM (or FHCM)
based on the ratio $\textbf{R}$.  
Dividing the reported FMM time by the value $\textbf{R}^2$, we will arrive at an estimate 
for the new FMM time computed on a coarser $\bar{h}$-grid with errors similar 
to those committed by FMSM on an($h$,$h^c$)-grid.

Among Tables \ref{tab:checker_11_c1}-\ref{tab:sinu_2_c2}, 
the overall worst-case scenario for FMSM under this analysis 
is the $11 \times 11$ checkerboard example.  Using the data in Table 
\ref{tab:checker_11_c1} with $M = 176^2$ and comparing FMM with FMSM at $22^2$, 
$44^2$, and $88^2$ cells, the new estimated FMM times
would be $0.82/(2.392^2) = .343$, $0.82/(1.3489^2) = .608$, and $0.82/(1.004^2) = .817$.
Comparing this to $0.29$, $0.35$, $0.53$ reported for FMSM, we see that
 each of the cell trials still outperforms the corresponding improved time of FMM.
Similar conclusions are reached when this analysis is performed using error ratios in $L_1$ norms. 
\end{remark}

\pagebreak

\begin{table}[H]\footnotesize
\caption{Performance/convergence results for 20 trials of $11 \times 11$ checkerboard example on a coarse grid.}
 \vspace*{2mm}
\begin{tabular}{|c|c|c|c|c|c|c|}
\hline
\textbf{Grid Size} & $\bm{L_\infty}$ \textbf{Error} & $\bm{L_1}$ \textbf{Error} & \textbf{FMM Time}  & \textbf{FSM Time} & \textbf{LSM Time} & \textbf{\# Sweeps}\\
\hline
176 $\times$ 176  & 2.0986e-002  & 1.1087e-002  & 0.82   & 3.91  & 0.81  & 16 \\ 
\hline
\end{tabular}

\vspace*{.2cm}

\begin{tabular}{|l|c|c|c|c|c|c|c|}
\hline 
\textbf{METHOD} & \textbf{TIME} & $\bm{\mathcal{R}}$ & $\bm{\mathcal{\rho}}$ & \textbf{R} & \textbf{AvHR} & \textbf{AvS} & \textbf{Mon \%} \\
\hline
\vspace*{-.4cm}
&&&&&&&\\
\hline 	
HCM $22\times22$ cells &0.59&&&& 1.438& 5.134 & \\ \hline
HCM $44\times44$ cells &0.59&&&& 1.171& 4.199 & \\ \hline
HCM $88\times88$ cells &0.72&&&& 1.041& 3.779 & \\ \hline

\hline
\vspace*{-.4cm}
&&&&&&&\\
\hline 	

FHCM $22\times22$ cells &0.41& 1.0017& 1.0000& 1.0000 & 1.440& 1.804 & 88.2\\ \hline
FHCM $44\times44$ cells &0.43& 1.0015& 1.0000& 1.0000 & 1.171& 1.374 & 97.0\\ \hline
FHCM $88\times88$ cells &0.59& 1.0000& 1.0000& 1.0000 & 1.041& 1.158 & 100.0\\ \hline

\hline
\vspace*{-.4cm}
&&&&&&&\\
\hline
FMSM $22\times22$ cells &0.29& 5.1670& 1.0770& 2.3920 & & 1.269 & \\ \hline
FMSM $44\times44$ cells &0.35& 2.2742& 1.0066& 1.3489 & & 1.334 & \\ \hline
FMSM $88\times88$ cells &0.53& 1.2309& 1.0004& 1.0040 & & 1.221 & \\ \hline

\hline
\end{tabular}

\vspace*{6mm}
\begin{tabular}{|c|c|c|c|c|c|c|}
\hline
\textbf{Grid Size} & $\bm{L_\infty}$ \textbf{Error} & $\bm{L_1}$ \textbf{Error} & \textbf{FMM Time}& \textbf{FSM Time}  & \textbf{LSM Time} & \textbf{\# Sweeps}\\
\hline
352 $\times$ 352  & 1.1470e-002  & 6.0787e-003  & 3.52   & 15.4  & 3.16  & 16 \\ 
\hline
\end{tabular}

\vspace*{.2cm}

\begin{tabular}{|l|c|c|c|c|c|c|c|}
\hline 
\textbf{METHOD} & \textbf{TIME} & $\bm{\mathcal{R}}$ & $\bm{\mathcal{\rho}}$ & \textbf{R} & \textbf{AvHR} & \textbf{AvS} & \textbf{Mon \%} \\
\hline
\vspace*{-.4cm}
&&&&&&&\\
\hline 	
HCM $22\times22$ cells &2.40&&&& 1.438& 5.302 & \\ \hline
HCM $44\times44$ cells &2.25&&&& 1.208& 4.465 & \\ \hline
HCM $88\times88$ cells &2.32&&&& 1.059& 3.904 & \\ \hline
HCM $176\times176$ cells &2.91&&&& 1.018& 3.757 & \\ \hline

\hline
\vspace*{-.4cm}
&&&&&&&\\
\hline 	

FHCM $22\times22$ cells &1.61& 1.1194& 1.0002& 1.0725 & 1.490& 1.936 & 84.9\\ \hline
FHCM $44\times44$ cells &1.53& 1.0434& 1.0000& 1.0000 & 1.228& 1.508 & 92.2\\ \hline
FHCM $88\times88$ cells &1.69& 1.0745& 1.0000& 1.0000 & 1.059& 1.190 & 97.5\\ \hline
FHCM $176\times176$ cells &2.40& 1.0273& 1.0000& 1.0000 & 1.018& 1.086 & 100.0\\ \hline

\hline
\vspace*{-.4cm}
&&&&&&&\\
\hline
FMSM $22\times22$ cells &1.12& 10.551& 1.1593& 4.0315 & & 1.269 & \\ \hline
FMSM $44\times44$ cells &1.21& 4.7036& 1.0252& 3.9089 & & 1.334 & \\ \hline
FMSM $88\times88$ cells &1.38& 4.1945& 1.0093& 3.9089 & & 1.222 & \\ \hline
FMSM $176\times176$ cells &2.12& 4.1945& 1.0074& 3.9089 & & 1.127 & \\ \hline

\hline
\end{tabular}
\label{tab:checker_11_c1}
\end{table}

\begin{table}[H]\footnotesize
\caption{Performance/convergence results for 20 trials of $41 \times 41$ checkerboard on a coarse grid.}
 \vspace*{2mm}
\begin{tabular}{|c|c|c|c|c|c|c|}
\hline
\textbf{Grid Size} & $\bm{L_\infty}$ \textbf{Error} & $\bm{L_1}$ \textbf{Error} & \textbf{FMM Time}& \textbf{FSM Time} & \textbf{LSM Time} & \textbf{\# Sweeps}\\
\hline
164 $\times$ 164  & 7.1112e-002  & 3.8397e-002  & 1.08   & 17.9  & 4.01  & 44 \\
\hline
\end{tabular}

\vspace*{.2cm}

\begin{tabular}{|l|c|c|c|c|c|c|c|}
\hline 
\textbf{METHOD} & \textbf{TIME} & $\bm{\mathcal{R}}$ & $\bm{\mathcal{\rho}}$ & \textbf{R} & \textbf{AvHR} & \textbf{AvS} & \textbf{Mon \%} \\

\hline
\vspace*{-.4cm}
&&&&&&&\\
\hline
HCM $41\times41$ cells &1.13&&&& 2.204& 7.041 & \\ \hline
HCM $82\times82$ cells &1.05&&&& 1.261& 4.215 & \\ \hline

\hline
\vspace*{-.4cm}
&&&&&&&\\
\hline 

FHCM $41\times41$ cells &0.85& 1.0000& 1.0000& 1.0000 & 2.204& 2.449 & 92.2\\ \hline
FHCM $82\times82$ cells &0.90& 1.0000& 1.0000& 1.0000 & 1.261& 1.474 & 100.0\\ \hline

\hline
\vspace*{-.4cm}
&&&&&&&\\
\hline
FMSM $41\times41$ cells &0.53& 1.4878& 1.0850& 1.0197 & & 1.163 & \\ \hline
FMSM $82\times82$ cells &0.77& 1.1277& 1.0162& 1.0193 & & 1.210 & \\ \hline

\hline
\end{tabular}

 \vspace*{6mm}
\begin{tabular}{|c|c|c|c|c|c|c|}
\hline
\textbf{Grid Size} & $\bm{L_\infty}$ \textbf{Error} & $\bm{L_1}$ \textbf{Error} & \textbf{FMM Time}& \textbf{FSM Time} & \textbf{LSM Time} & \textbf{\# Sweeps}\\
\hline
328 $\times$ 328  & 4.0403e-002  & 2.3205e-002  & 4.44   & 73.3  & 16.6  & 45 \\ 
\hline
\end{tabular}

\vspace*{.2cm}

\begin{tabular}{|l|c|c|c|c|c|c|c|}
\hline 
\textbf{METHOD} & \textbf{TIME} & $\bm{\mathcal{R}}$ & $\bm{\mathcal{\rho}}$ & \textbf{R} & \textbf{AvHR} & \textbf{AvS} & \textbf{Mon \%} \\

\hline
\vspace*{-.4cm}
&&&&&&&\\
\hline

HCM $41\times41$ cells &5.42&&&& 2.873& 9.970 & \\ \hline
HCM $82\times82$ cells &4.02&&&& 1.500& 5.104 & \\ \hline
HCM $164\times164$ cells &4.19&&&& 1.181& 4.105 & \\ \hline

\hline
\vspace*{-.4cm}
&&&&&&&\\
\hline 

FHCM $41\times41$ cells &3.65& 1.0988& 1.0008& 1.0679 & 2.873& 3.802 & 81.6\\ \hline
FHCM $82\times82$ cells &2.90& 1.0236& 1.0000& 1.0000 & 1.501& 1.923 & 88.0\\ \hline
FHCM $164\times164$ cells &3.55& 1.0000& 1.0000& 1.0000 & 1.181& 1.384 & 100.0\\ \hline
	
\hline
\vspace*{-.4cm}
&&&&&&&\\
\hline
FMSM $41\times41$ cells &1.88& 2.9459& 1.4364& 1.4668 & & 1.164 & \\ \hline
FMSM $82\times82$ cells &2.22& 2.3040& 1.0533& 1.1457 & & 1.211 & \\ \hline
FMSM $164\times164$ cells &3.27& 1.1540& 1.0009& 1.0679 & & 1.281 & \\ \hline
\hline
\end{tabular}
\label{tab:checker_41_c1}
\end{table}

\vspace*{-10mm}
\begin{table}[H]\footnotesize
\caption{Performance/convergence results for 20 trials of $F(x,y) = 1 + \frac{1}{2} \sin(20 \pi x) \sin(20 \pi y) $ on a coarse grid.}
\begin{tabular}{|c|c|c|c|c|c|c|}
\hline
\textbf{Grid Size} & $\bm{L_\infty}$ \textbf{Error} & $\bm{L_1}$ \textbf{Error} & \textbf{FMM Time}& \textbf{FSM Time} & \textbf{LSM Time} & \textbf{\# Sweeps}\\
\hline
176 $\times$ 176  & 3.6535e-002  & 1.3374e-002  & 0.94   & 8.77  & 3.08  & 28 \\
\hline
\end{tabular}

\vspace*{1mm}

\begin{tabular}{|l|c|c|c|c|c|c|c|}
\hline 
\textbf{METHOD} & \textbf{TIME} & $\bm{\mathcal{R}}$ & $\bm{\mathcal{\rho}}$ & \textbf{R} & \textbf{AvHR} & \textbf{AvS} & \textbf{Mon \%} \\
\hline
\vspace*{-.4cm}
&&&&&&&\\
\hline 	

HCM $22\times22$ cells &0.97&&&& 1.773& 8.233 & \\ \hline
HCM $44\times44$ cells &0.88&&&& 1.280& 4.992 & \\ \hline
HCM $88\times88$ cells &0.87&&&& 1.100& 3.975 & \\ \hline

\hline
\vspace*{-.4cm}
&&&&&&&\\
\hline

FHCM $22\times22$ cells &0.66& 1.3736& 1.0209& 1.0000 & 2.153& 2.814 & 69.3\\ \hline
FHCM $44\times44$ cells &0.60& 1.1703& 1.0186& 1.0000 & 1.285& 1.684 & 87.7\\ \hline
FHCM $88\times88$ cells &0.71& 1.1170& 1.0072& 1.0000 & 1.100& 1.234 & 100.0\\ \hline

\hline
\vspace*{-.4cm}
&&&&&&&\\
\hline

FMSM $22\times22$ cells &0.38& 7.0809& 1.2945& 1.0359 & & 1.244 & \\ \hline
FMSM $44\times44$ cells &0.42& 2.2023& 1.0402& 1.0100 & & 1.197 & \\ \hline
FMSM $88\times88$ cells &0.64& 1.0945& 1.0024& 1.0000 & & 1.213 & \\ \hline
\hline
\end{tabular}

 \vspace*{2mm}
\begin{tabular}{|c|c|c|c|c|c|c|}
\hline
\textbf{Grid Size} & $\bm{L_\infty}$ \textbf{Error} & $\bm{L_1}$ \textbf{Error} & \textbf{FMM Time}& \textbf{FSM Time} & \textbf{LSM Time} & \textbf{\# Sweeps}\\
\hline
352 $\times$ 352  & 1.8414e-002  & 7.0584e-003  & 3.92   & 33.7  & 11.1  & 27 \\ 
\hline
\end{tabular}

\vspace*{1mm}

\begin{tabular}{|l|c|c|c|c|c|c|c|}
\hline 
\textbf{METHOD} & \textbf{TIME} & $\bm{\mathcal{R}}$ & $\bm{\mathcal{\rho}}$ & \textbf{R} & \textbf{AvHR} & \textbf{AvS} & \textbf{Mon \%} \\
\hline
\vspace*{-.4cm}
&&&&&&&\\
\hline 	

HCM $22\times22$ cells &4.43&&&& 1.909& 9.864 & \\ \hline
HCM $44\times44$ cells &3.57&&&& 1.403& 5.969 & \\ \hline
HCM $88\times88$ cells &3.18&&&& 1.178& 4.493 & \\ \hline
HCM $176\times176$ cells &3.45&&&& 1.060& 3.891 & \\ \hline
\hline
\vspace*{-.4cm}
&&&&&&&\\
\hline

FHCM $22\times22$ cells &2.89& 1.8770& 1.0300& 1.0202 & 2.905& 3.630 & 66.2\\ \hline
FHCM $44\times44$ cells &2.29& 1.8064& 1.0712& 1.0000 & 1.425& 1.918 & 82.2\\ \hline
FHCM $88\times88$ cells &2.23& 1.2724& 1.0108& 1.0000 & 1.182& 1.394 & 93.7\\ \hline
FHCM $176\times176$ cells &2.84& 1.0500& 1.0016& 1.0000 & 1.060& 1.130 & 100.0\\ \hline

\hline
\vspace*{-.4cm}
&&&&&&&\\
\hline

FMSM $22\times22$ cells &1.44& 4.3257& 1.4890& 1.1939 & & 1.246 & \\ \hline
FMSM $44\times44$ cells &1.46& 2.2958& 1.0975& 1.1932 & & 1.197 & \\ \hline
FMSM $88\times88$ cells &1.78& 1.7082& 1.0110& 1.0806 & & 1.213 & \\ \hline
FMSM $176\times176$ cells &2.57& 1.0845& 1.0010& 1.0000 & & 1.126 & \\ \hline

\hline
\end{tabular}
\label{tab:sinu_1_c1}
\end{table}


\vspace*{-3mm}

\begin{table}[H]\footnotesize
\caption{Performance/convergence results for 20 trials $F(x,y) = 1 + 0.99 \sin(2\pi x) \sin(2\pi y) $ on a coarse grid.}
\begin{tabular}{|c|c|c|c|c|c|c|}
\hline
\textbf{Grid Size} & $\bm{L_\infty}$ \textbf{Error} & $\bm{L_1}$ \textbf{Error} & \textbf{FMM Time}& \textbf{FSM Time}  & \textbf{LSM Time} & \textbf{\# Sweeps}\\
\hline
176 $\times$ 176  & 1.0533e-001  & 5.6430e-003  & 0.93   & 4.00  & 0.93  & 13 \\ 
\hline
\end{tabular}

\vspace*{1mm}

\begin{tabular}{|l|c|c|c|c|c|c|c|}
\hline 
\textbf{METHOD} & \textbf{TIME} & $\bm{\mathcal{R}}$ & $\bm{\mathcal{\rho}}$ & \textbf{R} & \textbf{AvHR} & \textbf{AvS} & \textbf{Mon \%} \\
\hline
\vspace*{-.4cm}
&&&&&&&\\
\hline

HCM $22\times22$ cells &0.74&&&& 1.165& 4.496 & \\ \hline
HCM $44\times44$ cells &0.73&&&& 1.085& 4.040 & \\ \hline
HCM $88\times88$ cells &0.83&&&& 1.026& 3.790 & \\ \hline
	
\hline
\vspace*{-.4cm}
&&&&&&&\\
\hline 
FHCM $22\times22$ cells &0.47& 1.0952& 1.0020& 1.0004 & 1.169& 1.388 & 94.2\\ \hline
FHCM $44\times44$ cells &0.50& 1.0200& 1.0005& 1.0000 & 1.087& 1.173 & 97.9\\ \hline
FHCM $88\times88$ cells &0.66& 1.0045& 1.0001& 1.0000 & 1.027& 1.051 & 100.0\\ \hline

\hline
\vspace*{-.4cm}
&&&&&&&\\
\hline 
FMSM $22\times22$ cells &0.37& 1.2819& 1.0044& 1.0164 & & 1.231 & \\ \hline
FMSM $44\times44$ cells &0.41& 1.1839& 1.0007& 1.0053 & & 1.116 & \\ \hline
FMSM $88\times88$ cells &0.59& 1.0979& 1.0001& 1.0000 & & 1.057 & \\ \hline

\hline
\end{tabular}

\vspace*{2mm}
\begin{tabular}{|c|c|c|c|c|c|c|}
\hline
\textbf{Grid Size} & $\bm{L_\infty}$ \textbf{Error} & $\bm{L_1}$ \textbf{Error} & \textbf{FMM Time}& \textbf{FSM Time}  & \textbf{LSM Time} & \textbf{\# Sweeps}\\
\hline
352 $\times$ 352  & 6.8813e-002  & 3.1818e-003  & 3.84   & 15.9  & 3.64  & 13 \\ 
\hline
\end{tabular}

\vspace*{1mm}

\begin{tabular}{|l|c|c|c|c|c|c|c|}
\hline 
\textbf{METHOD} & \textbf{TIME} & $\bm{\mathcal{R}}$ & $\bm{\mathcal{\rho}}$ & \textbf{R} & \textbf{AvHR} & \textbf{AvS} & \textbf{Mon \%} \\
\hline
\vspace*{-.4cm}
&&&&&&&\\
\hline
HCM $22\times22$ cells &3.00&&&& 1.178& 4.624 & \\ \hline
HCM $44\times44$ cells &2.76&&&& 1.076& 4.082 & \\ \hline
HCM $88\times88$ cells &2.83&&&& 1.033& 3.853 & \\ \hline
HCM $176\times176$ cells &3.29&&&& 1.008& 3.747 & \\ \hline
	
\hline
\vspace*{-.4cm}
&&&&&&&\\
\hline 

FHCM $22\times22$ cells &1.82& 1.1364& 1.0040& 1.0004 & 1.178& 1.405 & 93.2\\ \hline
FHCM $44\times44$ cells &1.71& 1.0204& 1.0005& 1.0000 & 1.080& 1.170 & 97.7\\ \hline
FHCM $88\times88$ cells &1.98& 1.0034& 1.0001& 1.0000 & 1.034& 1.071 & 99.2\\ \hline
FHCM $176\times176$ cells &2.69& 1.0006& 1.0000& 1.0000 & 1.008& 1.022 & 100.0\\ \hline

\hline
\vspace*{-.4cm}
&&&&&&&\\
\hline 
FMSM $22\times22$ cells &1.44& 2.3482& 1.0080& 1.0074 & & 1.231 & \\ \hline
FMSM $44\times44$ cells &1.42& 1.5167& 1.0014& 1.0037 & & 1.116 & \\ \hline
FMSM $88\times88$ cells &1.61& 1.1989& 1.0004& 1.0034 & & 1.057 & \\ \hline
FMSM $176\times176$ cells &2.44& 1.0953& 1.0001& 1.0015 & & 1.028 & \\ \hline

\hline
\end{tabular}
\label{tab:sinu_2_c2}
\end{table}

\pagebreak

\begin{remark}
We could perform a similar comparison between FMSM and sweeping methods, but the latter 
allow for yet another speed up technique: the sweeping can be stopped before the full
convergence to the solution of system \eqref{eq:Eik_discr}.
In fact, in many implementations of Fast Sweeping, the method terminates when the 
changes in grid values due to the most recent sweep fall below
some positive threshold $t^*$; e.g., see \cite{KaoOsherTsai}.
Similarly to FHCM and FMSM, this results in additional errors, and it is useful
to consider both these errors and the corresponding savings in computational time.
To the best of our knowledge, this issue has not been analyzed so far.
The practical implementations of FSM and LSM typically select $t^*$ heuristically
or make it proportional to the grid-size $h$.  It is usually claimed that the number
of sweeps necessary for convergence is $h$-independent \cite{TsaiChengOsherZhao, Zhao}.  
Tables \ref{tab:checker_41_c1} and \ref{tab:sinu_1_c1} show that the
number of sweeps-to-convergence (i.e., for $t^* = 0$) may in fact depend on $h$.
We believe that Figure \ref{fig:more_than_4} provides one possible explanation 
for this phenomenon (since the location of gridpoints relative to shocklines
is $h$-dependent).

For $t^* > 0$, the more relevant questions are:
\begin{enumerate}
\item
How well do the changes in the most recent sweep represent the additional errors,
which would result if we were to stop the sweeping?
\item
Is the number of sweeps (needed for a fixed $t^* > 0$) really $h$-independent?
\item
Supposing the additional (``early-termination'') errors could be estimated,
would the number of required sweeps be $h$-independent?
\item
Supposing FSM or LSM were run for as many sweeps as necessary to make 
the additional errors approximately the same as those introduced by FMSM or FHCM,
would the resulting computational costs be less than those of hybrid methods?
\end{enumerate}
To answer these questions for one specific ($41 \times 41$ checkerboard) example,
we have run both sweeping methods on $164 ^2$ and $1312^2$ grids. 
In table \ref{tab:check41_sweepMaxChange}
we report the $L_\infty$ change in grid values, the percentage of gridpoints changing, 
and potential early-termination errors ($\bm{\mathcal{R}}$, $\bm{\rho}$, and $\textbf{R}$) 
after each sweep.  At least for this particular example:

\begin{enumerate}
\item
The answer to Question 1 is inconclusive,
though the max changes are clearly correlated with $\textbf{R}$ and $\rho$.
\item
The answer to Question 2 is negative; moreover, after the same number of sweeps,
the max changes on the $1312^2$ grid are clearly larger than on the $164^2$ grid.
\item
The answer to Question 3 is negative; e.g., $\textbf{R}$ reduces below $1.1$ after
only $12$ sweeps on the $164^2$ grid, but the same reduction on the $1312^2$ grid
requires $42$ sweeps.
\item
To answer the last question, we note that for this example FHCM produces very 
small additional errors, while FMSM  results in  
$\textbf{R} = 1.0197$ and $\textbf{R} = 1.0193$
(on the $164^2$ grid with $21^2$ and $42^2$ cells, respectively; 
see Table \ref{tab:checker_41_c1}).
As Table \ref{tab:check41_sweepMaxChange}A shows,
$16$ sweeps would be needed for FSM or LSM to produce the same $\textbf{R}$ values
on this grid. 
Our computational experiment shows that FSM and LSM times for these 16 sweeps 
are 6.62 and 2.91  seconds respectively (note that this is the total time for 20 trials,
similar to the times reported in Table \ref{tab:checker_41_c1}).
Thus, FMSM is still more than 3.5 times faster than the early-terminated LSM 
and more than 8 times faster than the early-terminated FSM.
For the $1312^2$ example, we see that the error ratios take longer to converge to 
$1$ for the sweeping methods (Table \ref{tab:check41_sweepMaxChange}B).  
The FMSM's $\textbf{R}$ values of $\{3.3991, 1.7662, 1.7123\}$ (from 
Table \ref{tab:checker_41}, for different cell sizes) 
correspond to $\{28, 37, 37 \}$ sweeps in Table \ref{tab:check41_sweepMaxChange}B.  
The experimentally measured early-terminated execution times for FSM  and LSM are 
$\{36.85, 48.77, 48.77\}$ seconds and $\{7.40, 11.68, 11.68\}$ seconds respectively.
Again, FMSM still holds a large advantage (more than 4 times faster than LSM and more than 
18 times faster than FSM).  We note that for both the $164^2$ and 
$1312^2$ cases, the early-terminated FSM time was linear in the number of sweeps, 
while LSM did not receive as much of a speed boost; this is natural
since the percentage of gridpoints changing in the omitted ``later iterations'' is low,
and the LSM's computational cost is largely dependent on the number of unlocked gridpoints 
in each sweep.
\end{enumerate}
\end{remark}

\begin{center}
\begin{table}[H]\footnotesize
$
\begin{array}{cc}

\hspace{-1.7cm}

\begin{tabular}{|c|c|c|c|c|c|}
\hline
\textbf{Sweep}&\textbf{Max}&\textbf{\% GPs} & $\bm{\mathcal{R}}$ & $\bm{\mathcal{\rho}}$ 
& \textbf{R}\\
\textbf{\#}&\textbf{Change}&\textbf{changing} & & & \\ \hline \hline

$1$ &1.00e+008&26.22 & -& -& -\\ \hline
$2$ &1.000e+008&31.856 & -& -& -\\ \hline
$3$ &1.000e+008&58.247 & 44.595& 1.7709& 4.8179\\ \hline
$4$ &2.7622e-001&44.4527 & 1.4685& 1.1027& 1.2445\\ \hline
$5$ &6.3846e-003&41.5341 & 1.4224& 1.0888& 1.1995\\ \hline
$6$ &5.9641e-003&41.1957 & 1.4195& 1.0759& 1.1995\\ \hline
$7$ &5.9641e-003&41.0730 & 1.3832& 1.0631& 1.1951\\ \hline
$8$ &5.4993e-003&40.1919 & 1.3331& 1.0509& 1.1562\\ \hline
$9$ &4.9918e-003&37.0650 & 1.3243& 1.0440& 1.1205\\ \hline
$10$ &4.9918e-003&36.6337 & 1.3230& 1.0377& 1.1205\\ \hline
$11$ &4.9918e-003&36.3995 & 1.2881& 1.0314& 1.1191\\ \hline
$12$ &4.7740e-003&34.6743 & 1.2492& 1.0255& 1.0854\\ \hline
$13$ &4.5076e-003&31.3318 & 1.2403& 1.0218& 1.0532\\ \hline
$14$ &4.5076e-003&30.8150 & 1.2400& 1.0185& 1.0520\\ \hline
$15$ &4.5076e-003&30.4767 & 1.2098& 1.0152& 1.0511\\ \hline
$16$ &4.1600e-003&28.0934 & 1.1820& 1.0121& 1.0270\\ \hline
$17$ &3.6304e-003&22.6502 & 1.1646& 1.0102& 1.0004\\ \hline
$18$ &3.6304e-003&21.8062 & 1.1644& 1.0085& 1.0000\\ \hline
$19$ &3.6304e-003&21.2374 & 1.1467& 1.0068& 1.0000\\ \hline
$20$ &3.2984e-003&19.1404 & 1.1268& 1.0052& 1.0000\\ \hline
$21$ &2.7917e-003&14.3367 & 1.1079& 1.0043& 1.0000\\ \hline
$22$ &2.7142e-003&13.6340 & 1.1079& 1.0035& 1.0000\\ \hline
$23$ &2.7142e-003&13.2250 & 1.0951& 1.0027& 1.0000\\ \hline
$24$ &2.4311e-003&11.6746 & 1.0812& 1.0020& 1.0000\\ \hline
$25$ &2.1725e-003&8.4659 & 1.0637& 1.0015& 1.0000\\ \hline
$26$ &1.8533e-003&7.9677 & 1.0630& 1.0012& 1.0000\\ \hline
$27$ &1.8533e-003&7.6852 & 1.0546& 1.0009& 1.0000\\ \hline
$28$ &1.7075e-003&6.6664 & 1.0457& 1.0006& 1.0000\\ \hline
$29$ &1.5049e-003&4.8000 & 1.0365& 1.0004& 1.0000\\ \hline
$30$ &1.1216e-003&4.4653 & 1.0303& 1.0003& 1.0000\\ \hline
$31$ &1.1216e-003&4.2646 & 1.0257& 1.0002& 1.0000\\ \hline
$32$ &1.0109e-003&3.5656 & 1.0209& 1.0001& 1.0000\\ \hline
$33$ &8.5675e-004&2.2754 & 1.0153& 1.0001& 1.0000\\ \hline
$34$ &4.8751e-004&2.0300 & 1.0110& 1.0001& 1.0000\\ \hline
$35$ &4.8751e-004&1.8813 & 1.0087& 1.0000& 1.0000\\ \hline
$36$ &4.2582e-004&1.4314 & 1.0068& 1.0000& 1.0000\\ \hline
$37$ &3.4338e-004&0.7064 & 1.0043& 1.0000& 1.0000\\ \hline
$38$ &1.1188e-004&0.5689 & 1.0025& 1.0000& 1.0000\\ \hline
$39$ &1.1188e-004&0.4871 & 1.0015& 1.0000& 1.0000\\ \hline
$40$ &8.9968e-005&0.2863 & 1.0011& 1.0000& 1.0000\\ \hline
$41$ &6.8284e-005&0.0632 & 1.0006& 1.0000& 1.0000\\ \hline
$42$ &2.4066e-005&0.0297 & 1.0002& 1.0000& 1.0000\\ \hline
$43$ &1.0931e-005&0.0112 & 1.0000& 1.0000& 1.0000\\ \hline
$44$ &0.0000e+000&0.0000 & 1.0000& 1.0000& 1.0000\\ \hline
\hline

\end{tabular}
&
\hspace{.2cm}
\begin{tabular}{|c|c|c|c|c|c|}
\hline
\textbf{Sweep}&\textbf{Max}&\textbf{\% GPs} & $\bm{\mathcal{R}}$ & $\bm{\mathcal{\rho}}$ 
& \textbf{R}\\
\textbf{\#}&\textbf{Change}&\textbf{changing} & & & \\ \hline \hline

$1$ &1.0e+008&25.2 & -& -& -\\ \hline
$2$ &1.000e+008&34.249 & -& -& -\\ \hline
$3$ &1.000e+008&62.372 & 48.051& 9.2339& 30.026\\ \hline
$4$ &3.621e-001&49.221 & 12.002& 4.1935& 7.7797\\ \hline
$5$ &1.0709e-002&43.0590 & 11.194& 3.9168& 7.7098\\ \hline
$6$ &1.0252e-002&42.1586 & 10.474& 3.6528& 7.2822\\ \hline
$7$ &1.0252e-002&42.0001 & 10.269& 3.3925& 7.2771\\ \hline
$8$ &1.0229e-002&39.8694 & 9.6885& 3.1431& 6.9538\\ \hline
$9$ &1.0207e-002&34.3652 & 9.6713& 2.9458& 6.9386\\ \hline
$10$ &1.0207e-002&33.2951 & 9.4074& 2.7562& 6.5653\\ \hline
$11$ &1.0207e-002&33.1453 & 9.0914& 2.5686& 6.5623\\ \hline
$12$ &1.0185e-002&31.2973 & 8.6218& 2.3892& 6.1648\\ \hline
$13$ &1.0165e-002&26.4975 & 8.5771& 2.2483& 6.1607\\ \hline
$14$ &1.0165e-002&25.5465 & 8.3781& 2.1135& 5.8497\\ \hline
$15$ &1.0165e-002&25.3961 & 8.0972& 1.9804& 5.8487\\ \hline
$16$ &1.0145e-002&23.7761 & 7.5600& 1.8536& 5.4607\\ \hline
$17$ &1.0127e-002&19.6460 & 7.5550& 1.7561& 5.4571\\ \hline
$18$ &1.0127e-002&18.8095 & 7.2488& 1.6635& 5.0667\\ \hline
$19$ &1.0127e-002&18.6819 & 7.0133& 1.5722& 5.0658\\ \hline
$20$ &1.0108e-002&17.2760 & 6.5857& 1.4861& 4.7569\\ \hline
$21$ &1.0092e-002&13.8028 & 6.5691& 1.4221& 4.7449\\ \hline
$22$ &1.0092e-002&13.0992 & 6.2438& 1.3619& 4.3682\\ \hline
$23$ &1.0092e-002&12.9746 & 6.0465& 1.3027& 4.3674\\ \hline
$24$ &1.0075e-002&11.8224 & 5.5031& 1.2475& 3.9749\\ \hline
$25$ &1.0060e-002&9.0073 & 5.4990& 1.2088& 3.9720\\ \hline
$26$ &1.0060e-002&8.4400 & 5.2424& 1.1730& 3.6712\\ \hline
$27$ &1.0060e-002&8.3202 & 5.0816& 1.1380& 3.6705\\ \hline
$28$ &1.0045e-002&7.3932 & 4.5433& 1.1059& 3.2817\\ \hline
$29$ &1.0031e-002&5.2311 & 4.5402& 1.0854& 3.2794\\ \hline
$30$ &1.0031e-002&4.8007 & 4.1140& 1.0670& 2.8875\\ \hline
$31$ &1.0031e-002&4.7054 & 3.9971& 1.0491& 2.8871\\ \hline
$32$ &1.0018e-002&4.0108 & 3.5893& 1.0334& 2.5926\\ \hline
$33$ &1.0005e-002&2.5072 & 3.5711& 1.0249& 2.5795\\ \hline
$34$ &1.0005e-002&2.2144 & 3.1264& 1.0177& 2.2008\\ \hline
$35$ &1.0005e-002&2.1276 & 3.0465& 1.0109& 2.2005\\ \hline
$36$ &9.9928e-003&1.6782 & 2.4976& 1.0053& 1.8040\\ \hline
$37$ &9.9809e-003&0.8296 & 2.4942& 1.0034& 1.8016\\ \hline
$38$ &9.9809e-003&0.6789 & 2.1526& 1.0020& 1.5223\\ \hline
$39$ &9.9809e-003&0.6047 & 2.1076& 1.0009& 1.5223\\ \hline
$40$ &9.9619e-003&0.3888 & 1.5638& 1.0002& 1.1295\\ \hline
$41$ &5.0711e-003&0.1151 & 1.5638& 1.0001& 1.1295\\ \hline
$42$ &4.9343e-003&0.0698 & 1.1631& 1.0000& 1.0000\\ \hline
$43$ &1.4668e-003&0.0338 & 1.0000& 1.0000& 1.0000\\ \hline
$44$ &0.0000e+000&0.0000 & 1.0000& 1.0000& 1.0000\\ \hline

\hline
\end{tabular}

\\A&B

\end{array}
$
\caption{Maximum change of $V$ for the sweeping methods for the
$41 \times 41$ checkerboard example on the $164 \times 164$ grid (A) and $1312 \times 1312$ grid (B). }

\vspace*{.2cm}
\label{tab:check41_sweepMaxChange}

\end{table}
\end{center}

\else

\begin{remark}
\label{rm:coarser_grids_performance}
All examples considered above strongly suggest that for each problem there 
exists an optimal cell size $h^c$ such that our hybrid methods significantly
outperform both FMM and FSM provided $h$ is sufficiently small.
An important practical question is whether such optimal $h^c$ 
can be also found for not-so-fine grids (that is, when $M$ is relatively small).
The goal is to choose $h^c$ sufficiently small
(to ensure that most cell-boundaries are either fully inflow 
or fully outflow), but not too small relative to $h$ 
(e.g., for $h^c = h$ the FMM will be clearly more efficient).
In the extended version of this paper, \cite{ChacVlad}, we have also revisited
all examples considered above on grids of size $176 \times 176$ and $352 \times 352$ gridpoints.
For each example we chose the faster of two prior methods (FMM and LSM)
and compared its performance to the new hybrid algorithms.
The numerical evidence shows that, for suitably chosen $h^c$, both HCM and FHCM are
at least as fast, while FMSM is usually more than twice faster even on these grids.
Since FHCM and FMSM introduce additional errors, the actual trade-off between 
efficiency and accuracy is more subtle than just comparing the execution times, 
but our careful analysis in \cite{ChacVlad} confirms that 
FHCM and FMSM remain advantageous even considering these additional errors.
In addition, we consider the issue of using ``early termination'' criteria 
to speed-up sweeping methods at the cost of additional errors;
we use the $41^2$ checkerboard example to show that FHCM and FMSM are 
still significantly faster, provided LSM terminates only after reaching comparable accuracy.
Finally, in \cite{ChacVlad} we also show that the performance of hybrid methods is similar 
for Eikonal problems with more general boundary conditions; 
these numerical results are omitted here for the sake of brevity.
\end{remark}
\fi

\iffullversion
\subsection{Continuous speed functions with general boundary conditions}
\label{ss:general_bc}


Next we return to speed functions $F(x,y) = 1 + 0.99 \sin(2 \pi x) \sin(2 \pi y)$
and $F(x,y) = 1 + \frac{1}{2} \sin(20\pi x) \sin(20 \pi y)$,
but this time with zero boundary conditions on the entire boundary of the square.
The performance data is summarized in Tables \ref{tab:sinu_1_gbc} and \ref{tab:sinu_2_gbc}.

\begin{remark}
\label{rem:FMSM_boundary_conditions}
Our current implementation of FMSM treats the coarse gridpoints nearest to the boundary 
as $Accepted$ in the initialization.  If there is more than one coarse gridpoint in the exit set, as in the following examples,
 care must be taken when 
ranking the ``acceptance order" of these coarse gridpoints.
While in the case of single-point exit sets it is safe to assign a zero value to these
coarse gridpoints,
for general boundary conditions we compute the values by a one-sided update 
from the cell center to the nearest
point on the boundary.  
In addition, our FMSM implementation iterates FSM to convergence on all cells
containing parts of $Q$.
\end{remark}

\begin{figure}[H]
\center{
$
\begin{array}{cc}
\includegraphics[scale = .28] {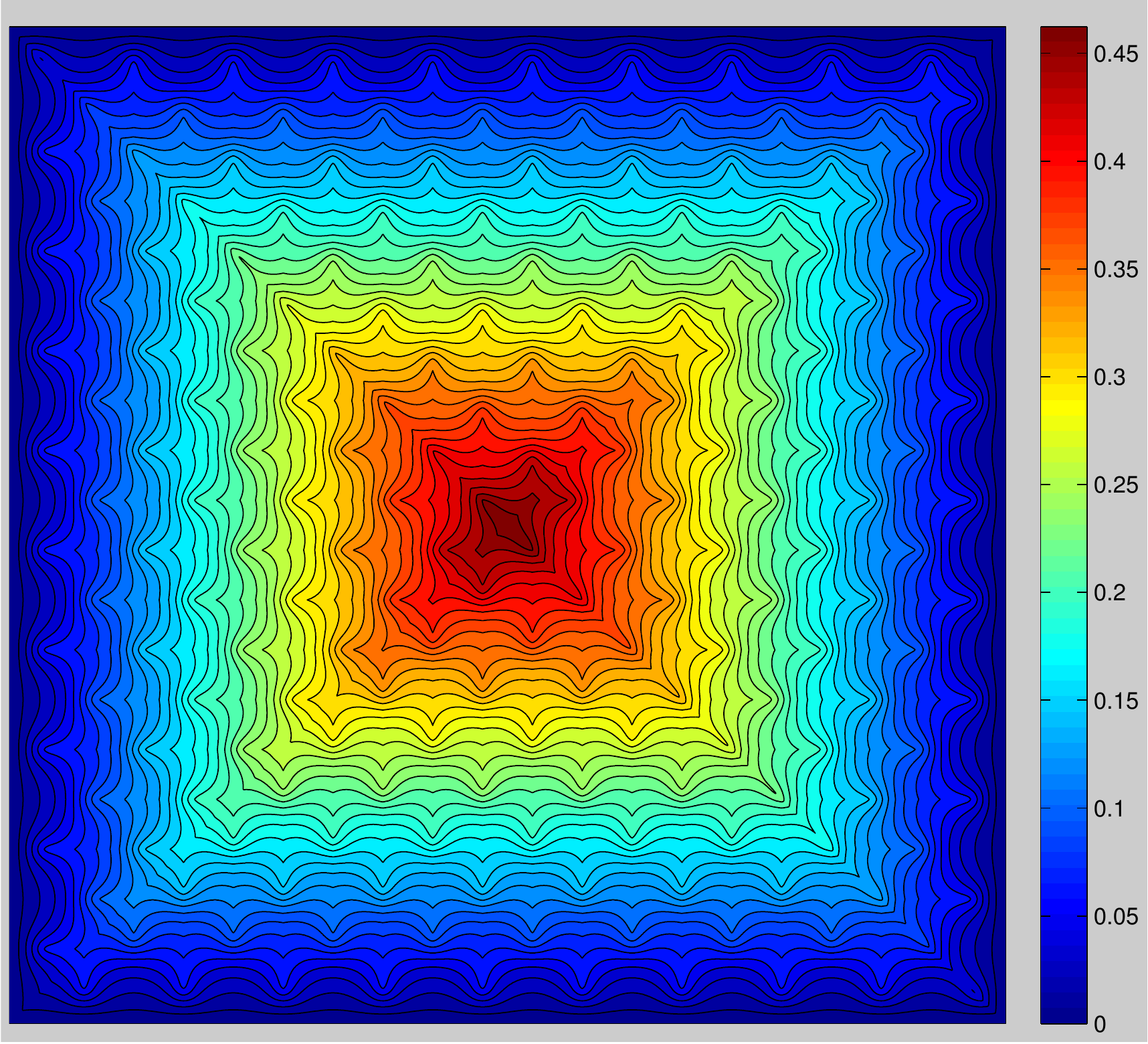}

&
\hspace*{5mm}
\includegraphics[scale = .28] {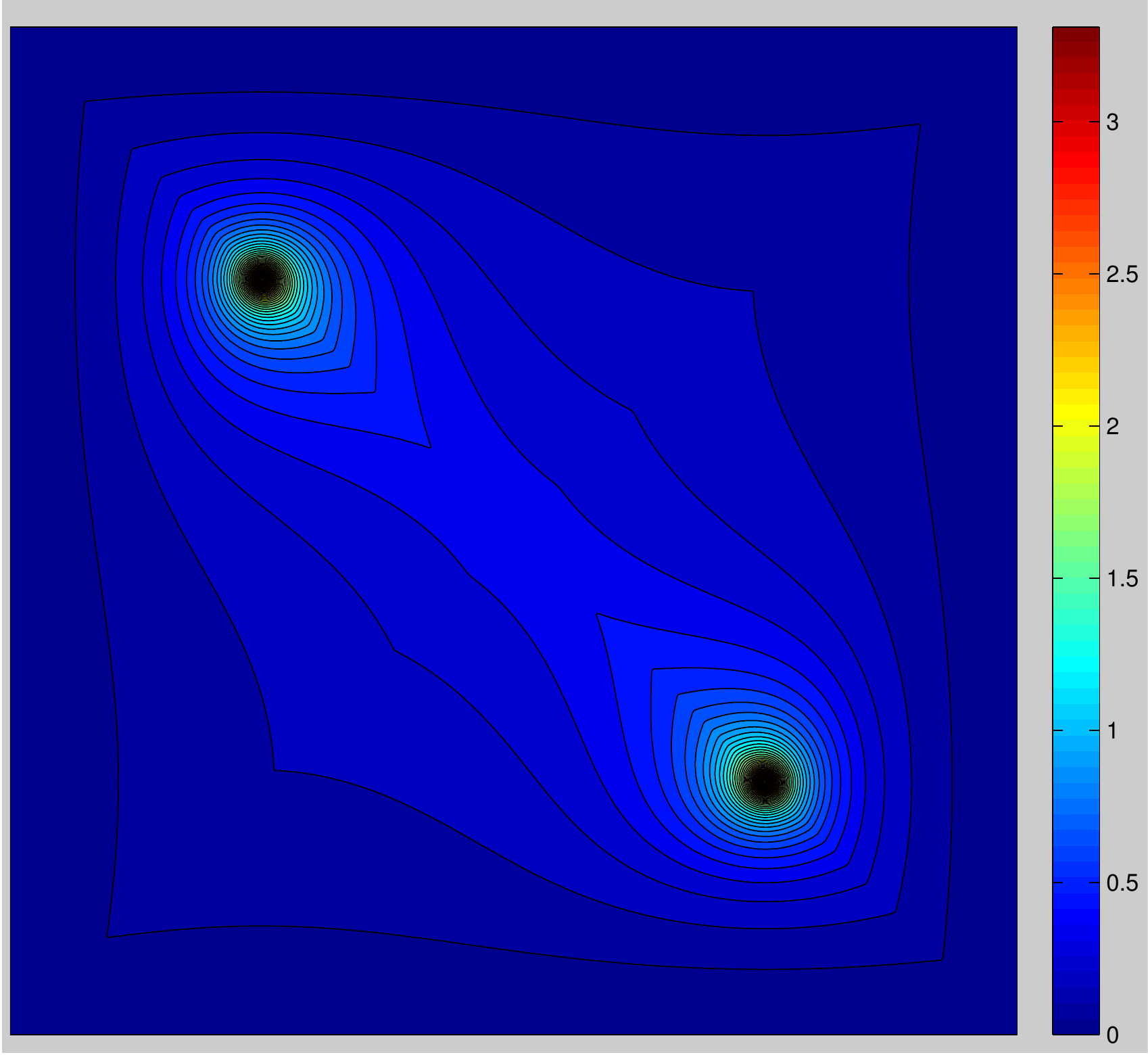}\\
A & B
\end{array}
$
}
\caption{
{\footnotesize
Min time to $\boundary$ under two sinusoidal speed functions.}}
\label{fig:sinusoids_gbc_computed}
\end{figure}

\vspace*{-8mm}
\begin{table}[h]\footnotesize
\caption{Performance/convergence results for $F(x,y) = 1 + \frac{1}{2} \sin(20\pi x) \sin(20\pi y) $ with $Q = \boundary$.}
 \vspace*{2mm}
\begin{tabular}{|c|c|c|c|c|c|c|}
\hline
\textbf{Grid Size} & $\bm{L_\infty}$ \textbf{Error} & $\bm{L_1}$ \textbf{Error} & \textbf{FMM Time} & \textbf{FSM Time} & \textbf{LSM Time} & \textbf{\# Sweeps}\\
\hline
1408 $\times$ 1408  & 1.3670e-003  & 3.7171e-004  & 3.89   & 24.3  & 6.62  & 24 \\ 
\hline
\end{tabular}

\vspace*{.2cm}

\begin{tabular}{|l|c|c|c|c|c|c|c|}
\hline 
\textbf{METHOD} & \textbf{TIME} & $\bm{\mathcal{R}}$ & $\bm{\mathcal{\rho}}$ & \textbf{R} & \textbf{AvHR} & \textbf{AvS} & \textbf{Mon \%} \\
\hline
\vspace*{-.4cm}
&&&&&&&\\
\hline
HCM $22\times22$ cells &3.48&&&& 1.853& 10.273 & \\ \hline
HCM $44\times44$ cells &2.92&&&& 1.470& 6.811 & \\ \hline
HCM $88\times88$ cells &2.53&&&& 1.195& 4.987 & \\ \hline
HCM $176\times176$ cells &2.35&&&& 1.098& 4.301 & \\ \hline
HCM $352\times352$ cells &2.37&&&& 1.046& 3.951 & \\ \hline
HCM $704\times704$ cells &2.91&&&& 1.018& 3.785 & \\ \hline

\hline
\vspace*{-.4cm}
&&&&&&&\\
\hline 

FHCM $22\times22$ cells &2.60& 20660& 1.5321& 3.2150 & 2.915& 4.498 & 54.5\\ \hline
FHCM $44\times44$ cells &1.95& 62.164& 1.2465& 1.5447 & 1.539& 2.502 & 68.0\\ \hline
FHCM $88\times88$ cells &1.66& 64.719& 1.0187& 1.0128 & 1.223& 1.749 & 83.9\\ \hline
FHCM $176\times176$ cells &1.55& 5.7122& 1.0032& 1.0063 & 1.102& 1.361 & 92.4\\ \hline
FHCM $352\times352$ cells &1.66& 1.1083& 1.0007& 1.0011 & 1.047& 1.165 & 97.5\\ \hline
FHCM $704\times704$ cells &2.40& 1.0192& 1.0001& 1.0001 & 1.018& 1.064 & 100.0\\ \hline

\hline
\vspace*{-.4cm}
&&&&&&&\\
\hline 

FMSM $22\times22$ cells &1.97& 1.6383e+5& 7.8665& 12.339 & & 2.184 & \\ \hline
FMSM $44\times44$ cells &1.67& 1.1325e+6& 2.6113& 4.2370 & & 1.892 & \\ \hline
FMSM $88\times88$ cells &1.42& 5506.21& 1.0388& 1.8072 & & 1.527 & \\ \hline
FMSM $176\times176$ cells &1.29& 859.45& 1.0044& 1.2609 & & 1.265 & \\ \hline
FMSM $352\times352$ cells &1.40& 253.58& 1.0009& 1.0270 & & 1.134 & \\ \hline
FMSM $704\times704$ cells &2.17& 6.6107& 1.0001& 1.0000 & & 1.062 & \\ \hline

\hline
\end{tabular}
\label{tab:sinu_1_gbc}
\end{table}

\begin{table}[h]\footnotesize
\caption{Performance/convergence results for $F(x,y) = 1 + 0.99 \sin(2\pi x) \sin(2\pi y) $  with $Q = \boundary$.}
 \vspace*{2mm}
\begin{tabular}{|c|c|c|c|c|c|c|}
\hline
\textbf{Grid Size} & $\bm{L_\infty}$ \textbf{Error} & $\bm{L_1}$ \textbf{Error} & \textbf{FMM Time} & \textbf{FSM Time} & \textbf{LSM Time} & \textbf{\# Sweeps}\\
\hline

1408 $\times$ 1408  & 2.2246e-002  & 2.7572e-004  & 3.66   & 8.06  & 2.58  & 8 \\ 
\hline
\end{tabular}

\vspace*{.2cm}

\begin{tabular}{|l|c|c|c|c|c|c|c|}
\hline 
\textbf{METHOD} & \textbf{TIME} & $\bm{\mathcal{R}}$ & $\bm{\mathcal{\rho}}$ & \textbf{R} & \textbf{AvHR} & \textbf{AvS} & \textbf{Mon \%} \\
\hline
\vspace*{-.4cm}
&&&&&&&\\
\hline

HCM $22\times22$ cells &2.03&&&& 1.176& 4.448 & \\ \hline
HCM $44\times44$ cells &1.97&&&& 1.089& 4.021 & \\ \hline
HCM $88\times88$ cells &1.93&&&& 1.047& 3.830 & \\ \hline
HCM $176\times176$ cells &1.96&&&& 1.020& 3.718 & \\ \hline
HCM $352\times352$ cells &2.10&&&& 1.009& 3.670 & \\ \hline
HCM $704\times704$ cells &2.74&&&& 1.006& 3.649 & \\ \hline

\hline
\vspace*{-.4cm}
&&&&&&&\\
\hline

FHCM $22\times22$ cells &1.51& 136.37& 1.0001& 1.0000 & 1.176& 1.903 & 93.4\\ \hline
FHCM $44\times44$ cells &1.35& 2.4167& 1.0000& 1.0000 & 1.091& 1.443 & 99.0\\ \hline
FHCM $88\times88$ cells &1.32& 2.4167& 1.0000& 1.0000 & 1.048& 1.226 & 99.6\\ \hline
FHCM $176\times176$ cells &1.39& 1.6390& 1.0000& 1.0000 & 1.020& 1.110 & 99.8\\ \hline
FHCM $352\times352$ cells &1.57& 1.0000& 1.0000& 1.0000 & 1.009& 1.054 & 99.9\\ \hline
FHCM $704\times704$ cells &2.33& 1.0000& 1.0000& 1.0000 & 1.006& 1.028 & 100.0\\ \hline

\hline
\vspace*{-.4cm}
&&&&&&&\\
\hline 
FMSM $22\times22$ cells &1.57& 12592& 1.0441& 1.0000 & & 1.599 & \\ \hline
FMSM $44\times44$ cells &1.27& 355.53& 1.0088& 1.0000 & & 1.306 & \\ \hline
FMSM $88\times88$ cells &1.15& 355.53& 1.0055& 1.0000 & & 1.157 & \\ \hline
FMSM $176\times176$ cells &1.14& 303.61& 1.0030& 1.0000 & & 1.079 & \\ \hline
FMSM $352\times352$ cells &1.31& 134.60& 1.0012& 1.0000 & & 1.040 & \\ \hline
FMSM $704\times704$ cells &2.11& 68.199& 1.0004& 1.0000 & & 1.014 & \\ \hline
\hline
\end{tabular}
\label{tab:sinu_2_gbc}
\end{table}

\fi
 
\section{Conclusions}
\label{s:conclusions}
We have introduced three new efficient hybrid methods for Eikonal equations.
Using a splitting of the domain into a number of cells (with the speed function
approximately constant on each of them), our methods employ sweeping methods on 
individual cells with the order of cell-processing and the direction of sweeps 
determined by a marching-like procedure on a coarser scale.  
Such techniques may introduce additional errors to attain higher
computational efficiency.
Of these new methods FMSM is generally the fastest and somewhat easier to implement,
while FHCM introduces smaller additional errors, 
and HCM is usually the slowest of the three but provably converges to the exact solutions.
The numerical evidence presented in this paper strongly suggests that\\
$\bullet \,$ when $h$ and $h^c$ are sufficiently small, 
additional errors introduced by FMSM and FHCM are negligible 
compared to those already present due to discretization;\\
$\bullet \,$ for the right $(h,h^c)$-combinations, our new hybrid algorithms 
significantly outperform the prior fast methods (FMM, FSM, and LSM).\\
Of course, the rate of change of the speed function $F$ determines the suitable 
size of cells and our methods are particularly efficient for the examples where $F$
is piecewise constant.  

All of the examples considered here used predetermined uniform cell-sizes.
From a practitioner's point of view, the value of the proposed methods will 
greatly increase once we develop bounds and estimates for the additional errors 
in both FMSM and FHCM.
Such estimates would be also very useful for the computational
costs of all three hybrid methods on a given cell-decomposition. 
In the future, we intend to automate the choice of cell-sizes (based on the
speed function and user-specified tolerances for additional errors) 
and further relax the requirement that all cells need to be uniform.
A generalization of this approach to cell-subdivision of unstructured meshes 
will also be valuable.

We expect the extensions of these techniques to higher dimensional problems to be 
useful for many applications and relatively straight-forward -- especially for
FMSM and HCM.  A higher dimensional version of the ``cell boundary monotonicity check''
will be needed to extend FHCM.

Other obvious directions for future work include extensions to higher-order
accurate discretizations and parallelizable cell-level numerical methods 
for Eikonal equations.

More generally, we hope that the ideas presented here can serve as a basis
for causal domain decomposition and efficient two-scale methods
for other static nonlinear PDEs, including those 
arising in anisotropic optimal control and differential games.

\vspace{.2in}

\end{document}